 \newtheorem{theorem}{Theorem}[section]
 \newtheorem{corollary}{Corollary}[section]
 \newtheorem{lemma}{Lemma}[section]
 \newtheorem{proposition}{Proposition}[section]
 \newtheorem{remark}{Remark}[section]
 \numberwithin{equation}{section}
\newcommand{\nn}{\nonumber}
\newcommand{\io}{\int_\Omega}
\newcommand{\R}{\mathbb{R}}
\newcommand{\beq}{\begin{equation}}
\newcommand{\eeq}{\end{equation}}
 \def\non{\nonumber }
\def\bea{\begin{eqnarray}}
\def\eea{\end{eqnarray}}
\begin{document}
\title{Comparison Methods for a Keller--Segel-type Model of Pattern Formations with Density-suppressed Motilities}
\author{Kentarou Fujie\thanks{Research Alliance Center for Mathematical Sciences, Tohuku University, Sendai 980-8578, Miyagi, Japan, \textsl{fujie@tohoku.ac.jp}},
	\ Jie Jiang\thanks{Wuhan Institute of Physics and Mathematics, Chinese Academy of Sciences,
		Wuhan 430071, HuBei Province, P.R. China,
		\textsl{jiang@wipm.ac.cn}.}}

\date{\today}

\maketitle

\begin{abstract} 
This paper is concerned with global well-posedness to the following fully parabolic kinetic system
\begin{equation}
\begin{cases}\label{chemo0}
u_t=\Delta (\gamma (v)u)\\
v_t-\Delta v+v=u
\end{cases}
\end{equation}
in a smooth bounded domain $\Omega\subset\mathbb{R}^n$, $n\geq1$ with no-flux boundary conditions. This model was recently proposed in \cite{PRL12,Sciencs11} to describe the process of stripe pattern formations via the so-called self-trapping mechanism. The system features a signal-dependent motility function $\gamma(\cdot)$ which is decreasing in $v$ and will vanish as $v$ tends to infinity. 

The major difficulty in analysis comes from the possible degeneracy as $v\nearrow+\infty.$ In this work we develop a new comparison method different from the conventional energy method in literature which reveals a striking fact that there is no finite-time degenercay in this system. More precisely, we use comparison principles for elliptic and parabolic equations to prove that degeneracy cannot take place in finite time in any spatial dimensions for all smooth motility functions satisfying $\gamma(s)>0$, $\gamma'(s)\leq0$ when $s\geq0$ and $\lim\limits_{s\rightarrow+\infty}\gamma(s)=0.$ Then we investigate global existence of classical solutions to \eqref{chemo0}  when $n\leq3$ and discuss the uniform-in-time boundedness under  certain growth conditions on $1/\gamma.$

In particular, we consider system \eqref{chemo0} with $\gamma(v)=e^{-v}$, which shares the same set of equilibria as well as the Lyapunov functional as the classical Keller--Segel model. In the two-dimensional setting, we observe a critical-mass phenomenon which is distinct from the well-known fact for the classical Keller--Segel model. We prove that classical solution always exists globally which is uniformly-in-time bounded with arbitrary initial data of sub-critical mass. On the contrary, with certain initial data of super-critical mass, the solution will become unbounded at time infinity which differs from the finite-time blowup behavior of the Keller--Segel model. 
	
\noindent
{\bf Keywords}: Global existence, comparison principles, degeneracy, blowup, chemotaxis.\\
\end{abstract}
\section{Introduction}
Recently,  Fu et al. \cite{PRL12}  proposed a fully parabolic kinetic system to model the process of stripe pattern formation through the so-called self-trapping mechanism. Denote the density of cells and the concentration of signals by $u(x,t)$ and $v(x,t)$, respectively. The resulting system reads
\begin{equation}
\begin{cases}\label{chemo0a}
u_t=\Delta (\gamma (v)u)+\mu u(1-u)\\
 \varepsilon v_t-\Delta v+v=u,
\end{cases}
\end{equation}
where  $\mu,\varepsilon\geq0$ are given constants. Here, $\gamma(\cdot)$ is a signal-dependent motility function decreasing in $v$ which characterizes the repressesive effect of signal concentration on cell motility. As experimentally observed in \cite{Sciencs11,PRL12}, this model correctly captures the dynamics at the propagating front where new stripes are formed.

Note  that $\Delta (\gamma(v)u)=\nabla \cdot(\gamma(v)\nabla u)+\nabla \cdot(u\gamma'(v)\nabla v)$.  The first equation of \eqref{chemo0a} has the following variant form
\begin{equation}\label{decom}
	u_t-\nabla \cdot(\gamma(v)\nabla u)=\nabla\cdot(u\gamma'(v)\nabla v)+\mu u(1-u).
\end{equation}
Since $\gamma'\leq 0$, system \eqref{chemo0a} can be regarded as a chemotaxis model of  Keller--Segel type involving signal-dependent diffusion rates and chemo-sensitivities.

Apparantly, the dependence of diffusion rate on $v$  leads to possible degeneracy as $v$ becomes unbounded.  Theoretical results concerning global solvability or existence of blowup are rather limited in the literature. In \cite{TaoWin17}, Tao and Winkler considered the initial-boundary value problem of \eqref{chemo0a} with $\mu=0$ and $\varepsilon=1$. By assuming uniform lower and upper bounds of $\gamma$ and $\gamma'$, they obtained global existence of uniformly-in-time bounded classical solutions in two dimensions and the existence of global weak solutions in higher dimensions. Global existence of classical solutions in the three-dimensional case was also examined under certain smallness assumptions on the initial data.

If $\gamma(v)$ vanishes as $v$ tends to infinity, then degeneracy becomes a serious issue in analysis. Therefore, the key problem lies in deriving an upper bound for $v$. One classical way in literature is to increase the $L^p-$integrability of $u$ since the $L^\infty(0,T;L^p(\Omega))$ boundedness of $u$ will yield to an upper bound for $v$ via the second equation with any $p>\frac{n}{2}$. Along with this idea,  Yoon and Kim \cite{YK17} studied \eqref{chemo0a} with a specific motility function $\gamma(v)=c_0 v^{-k}$, $\varepsilon=1$  and $\mu=0.$ By introducing approximating step functions of the motility, they obtained global existence of classical solution which is uniformly-in-time bounded for all $k>0$ under a smallness assumption on $c_0>0.$

On the other hand, the presence of logistic growth terms also helps to achieve higher $L^p-$integrability of $u$. In \cite{JKW18}, the degeneracy issue was tackled with the aid of the logistic source where global existence of uniformly-in-time bounded classical solutions  was proved  with any $\mu>0$ when $n=2$ and $\varepsilon=1$. However, a crucial assumption made in their work is that $\lim\limits_{s\rightarrow+\infty}\frac{\gamma'(s)}{\gamma(s)}$ exists which  excludes fast decay motilities like $e^{-v^2}$ or $e^{-e^v}$. More recently in \cite{WW2019},  making use of the approach developed by Winkler \cite{Win10} in the study of Keller--Segel model with logistic sources together with the approximating idea in \cite{YK17}, global existence of uniformly-in-time bounded classical solutions was shown when $n\geq3$ with large $\mu>0$ under an assumption of uniform boundedness of $|\gamma'(\cdot)|$ on $[0,\infty)$.

From a mathematical point of view, the problem becomes even challenging when $\mu=0$. To the best of our knowledge, global existence without any smallness assumption or logistic sources was only achieved in the simplified parabolic-elliptic case, i.e., $\varepsilon=0.$ With a   specific motility $\gamma(v)=v^{-k}$, global existence of classical solution with a uniform-in-time bound was established by delicate energy estimates in \cite{Anh19} when $n\leq 2$ for any $k>0$ or $n\geq3$ for $k<\frac{2}{n-2}$. 

In all work mentioned above, the upper bound of $v$ was established via deriving the $L^p-$integrability of $u$ with $p>\frac{n}{2}$ by energy method. Most calculations were carried out relied on the more familiar variant form \eqref{decom}. However, it should be noted that the decomposition in \eqref{decom} also breaks the original delicate structure and omits some significant information.  Recently in \cite{FJ19}, we considered the simplified parabolic-elliptic version of system \eqref{chemo0a} with general motility functions that satisfy
\begin{equation}\label{gamma0}
\mathrm{(A0)}:\gamma(v)\in C^3[0,+\infty),\;\gamma(v)>0,\;\;\gamma'(v)\leq0\;\;\text{on}\;(0,+\infty).
\end{equation}
Keeping the integrity of $\Delta(\gamma(v)u)$ in the first equation, we made a subtle observation of the nonlinear coupling structure. A new method based on comparison principle for elliptic equations was introduced to derive directly the point-wise upper bounds of $v$. Thus, finite-time degeneracy cannot take place for all $n\geq1$. Then we showed that classical solution always exists globally in dimension two under the assumption $\mathrm{(A0)}$ with any $\mu\geq0$. Moreover, the global solution was proven to be uniformly-in-time bounded if either $\mu>0$ or $1/\gamma$ satisfies certain polynomial growth condition.  More importantly, occurrence of exploding solutions was examined for the first time for this signal-dependent model. In the case $\gamma(v)=e^{-v}$ and $\mu=0$, a novel critical-mass phenomenon in the  two-dimensional setting was observed  that with any sub-critical mass, the global solution is uniformly-in-time bounded while with certain super-critical mass,  the global solution  will blow up at time infinity.

In this paper, we study the initial-boundary value problem for the original doubly parabolic degenerate system:
\begin{equation}
\begin{cases}\label{chemo1}
u_t=\Delta (\gamma (v)u)&x\in\Omega,\;t>0\\
v_t-\Delta v+v=u&x\in\Omega,\;t>0\\
\partial_\nu u=\partial_\nu v=0,\qquad &x\in\partial\Omega,\;t>0\\
u(x,0)=u_0(x),\;\;v(x,0)=v_0(x),\qquad & x\in\Omega,
\end{cases}
\end{equation}where $\Omega\subset\mathbb{R}^n$ with $n\geq1$  is a smooth bounded domain.


Our motivation comes from the typical choice  $\gamma(v)=e^{-v}$ in \eqref{chemo1}. Recall that the first equation of \eqref{chemo1} has a variant form \eqref{decom}, which allows us to regard system \eqref{chemo1} as a Keller--Segel system with signal-dependent diffusion rates and chemo-sensitivities. Under the circumstance, our system reads
\begin{equation}\label{chemo2a}
\begin{cases}
u_t=\Delta (ue^{-v})=\nabla \cdot(e^{-v}(\nabla u-u\nabla v)),&x\in\Omega,\;t>0\\
v_t-\Delta v+v=u,&x\in\Omega,\;t>0,
\end{cases}
\end{equation}
which has certain important features in common with the classical/minimal fully parabolic Keller--Segel system:
\begin{equation}\label{ks}
\begin{cases}
u_t=\nabla\cdot(\nabla u-u\nabla v)\\
v_t-\Delta v+v=u\\
\partial_\nu u=\partial_\nu v=0.
\end{cases}
\end{equation}
Indeed, beyond the formal resemblance, they share the same set of equilibria which consists of solutions to the following stationary problem:
\begin{equation}
\begin{cases}\label{steady}
-\Delta v+v=\Lambda e^{v}/\int_\Omega e^{v}\,dx\;\;\text{in}\;\Omega\\
u=\Lambda e^{v}/\int_\Omega e^{v}\,dx\;\;\text{in}\;\Omega\\
\partial_\nu v=0\;\;\text{on}\;\partial\Omega
\end{cases}
\end{equation}with $\Lambda=\|u_0\|_{L^1(\Omega)}>0$.
In addition, they have the same Lyapunov functional. Define the Lyapunov functional by
\begin{equation*}
\mathcal{F}(u,v)=\int_\Omega \left(u\log u+\frac12|\nabla v|^2+\frac12 v^2-uv\right)dx.
\end{equation*}Then for any smooth solution $(u,v)$ of  classical Keller--Segel system \eqref{ks}, there holds
\begin{equation*}
\frac{d}{dt}\mathcal{F}(u,v)(t)+\int_\Omega u\left|\nabla \log u-\nabla v\right|^2dx+\|v_t\|^2_{L^2(\Omega)}=0,
\end{equation*} while for our system \eqref{chemo2a}, there  holds
\begin{equation}\label{Lyapunov1}
\frac{d}{dt}\mathcal{F}(u,v)(t)+\int_\Omega ue^{-v}\left|\nabla \log u-\nabla v\right|^2dx+\|v_t\|^2_{L^2(\Omega)}=0,
\end{equation} where an extra weighted function $e^{-v}$ appears in the second dissipation term.

It is well-known that the classical solutions of the Keller--Segel system \eqref{ks} may blow up when $n\geq2$, i.e., there exists $T_\mathrm{max}\in(0,+\infty]$ such that 
\begin{equation*}
	\lim\limits_{t\nearrow T_{\mathrm{max}}}(\|u(\cdot,t)\|_{L^\infty(\Omega)}+\|v(\cdot,t)\|_{L^\infty(\Omega)})=+\infty.
\end{equation*}
In particular, when $n=2$, the classical Keller--Segel system \eqref{ks} has a critical-mass phenomenon. More precisely, there is a threshold number $\Lambda_c>0$ such that if the conserved total mass is less than $\Lambda_c$, then  global classical solution exists and remains bounded for all time \cite{Nagai97}; otherwise, it may blow up in finite or infinite time \cite{HW01,ssMAA2001}. Recently, a finite-time blowup solution was constructed in \cite{mizoguchi_winkler} and to our knowledge, infinite-time blowup has not been examined yet for the classical fully parabolic Keller--Segel system \eqref{ks} (see \cite{BCM10,GM18} for infinite-time blowup in Cauchy problem of the simplified parabolic--elliptic Keller--Segel system and see \cite{CS,TaoWincritical,Laurencot} for infinite-time blowup in initial-boundary value problem in different kinds of chemotaxis models). In higher dimensions, on the one hand global calssical solution exists with sufficiently small initial data in the scaling-invariant spaces \cite{Cao,Win10} while on the other hand, finite-time blowup was oberved for initial data with arbitrarily small mass \cite{Win13}.

In view of the same steady states of the above two systems  \eqref{chemo2a} and \eqref{ks} as well as the slight difference in dissipations during the evolutionary process, the main purpose of the present paper is to figure out whether their solutions have similar dynamical behavior. 

Now, we summarize the main results of problem \eqref{chemo1} as follow. 
 \begin{enumerate}[(I)]
 	\item  When $n=2$, we prove global existence of classical solution for all motility functions that have a vanishing limit, i.e., $\lim\limits_{s\rightarrow+\infty}\gamma(s)=0$ and satisfy $(\mathrm{A0})$. Moreover,  uniform-in-time boundedness is obtained provided that $1/\gamma$ grows at a polynomial rate at most; see Theorem \ref{TH1}.
 	\item When $n=3$, we show  uniform-in-time boundedness of global classical solutions supposing additionally that $1/\gamma$ grows at most linearly in $v$; see Theorem \ref{TH3d}.
 	\item For the case $\gamma(v)=e^{-v}$ and $n=2,$  classical solution always exists globally due to our first main result. Besides, we show that the solution is uniformly-in-time bounded if the total mass is less than some critical mass $\Lambda_c>0$ while with certain initial data of super-critical mass, we verify occurence of  inifinite-time blowup; see Theorem \ref{TH4}.
 \end{enumerate}

Now, let us sketch the idea of our comparison method in deriving the upper bound of $v$, which is the main novelty of the present contribution.  First, inspired by our previous work \cite{FJ19}, we introduce a non-negative auxiliary function $w(x,t)$ which is the solution of the following elliptic Helmholtz equation:
\begin{equation}
\begin{cases}
-\Delta w+w=u &x\in\Omega,\;t>0\\
\partial_\nu w=0&x\in\partial\Omega,\;t>0.
\end{cases}
\end{equation} 
We can formally write $w(x,t)=(I-\Delta)^{-1}[u](x,t)$  and we denote $w_0(x)=(I-\Delta)^{-1}[u_0]$.  One notes that in the parabolic-elliptic case, i.e., $\varepsilon=0$ in \eqref{chemo0a}, $w$ is identical to $v$. However, in the present doubly parabolic case, from the second equation we formally have
\begin{equation}\label{vexp}
v=w-(I-\Delta)^{-1}[v_t].
\end{equation}
Thus,  it suffices to derive upper bounds for both terms on the right-hand side of \eqref{vexp}. 

To this aim, we begin with  deducing an upper bound for the auxiliary function $w$. Since we only have $L^1-$boundedness of $u$   due to the conservation of mass, the $L^\infty-$boundedness of $w$ is nontrivial.  This goal is achieved by a sutble observation of the nonlinear coupling structure and an application of comparison principle for elliptic equations. In the same manner as we have previously done in \cite{FJ19}, taking $(I-\Delta)^{-1}$ on both sides of the first equation of \eqref{chemo1}, we obtain the following key identity:
\begin{equation}\label{keyid}
\partial_tw(x,t)+u\gamma(v)=(I-\Delta)^{-1}[u\gamma(v)](x,t),
\end{equation}
which captures the intrinsic mechanism of the system.   Indeed, making use of the decreasing property of $\gamma$, thanks to the comparison principle of elliptic equations together with Gronwall's inequality, one can deduce from \eqref{keyid} that
\begin{equation*}
w(x,t)\leq w_0(x)e^{Ct},\;\;\text{for all}\;x\in\Omega\;\;\text{and}\;t\geq0
\end{equation*}
with some $C>0$ depending only on $\gamma, \Omega$ and the initial data.

The second step is to obtain an upper bound of $v-w=-(I-\Delta)^{-1}[v_t]$, where  the comparison principle for heat equations now plays a crucial role. Denote $\mathcal{L}[g]=g_t-\Delta g+g$ for any smooth function $g(x,t)$ satisfying homogeneous Neumann boundary conditions. Thanks to the key identity \eqref{keyid} again,   we are able to establish by delicate calculations that
\begin{equation*}
	\mathcal{L}[v-w]\leq \mathcal{L}[\Gamma(v)+K],\;\;\text{for all}\;x\in\Omega\;\;\text{and}\;t\geq0,
\end{equation*}
with some sufficiently large constant $K>0$ such that $v_0(x)-w_0(x)\leq \Gamma(v_0(x))+K$ for all $x\in\Omega$. Here, since  $\gamma$ has a vanishing limit, we can construct a continuous function $\Gamma(\cdot)$  such that
\begin{equation}
	\Gamma(v)\leq \varepsilon_0 v,\;\text{for all}\;v>0
\end{equation} with some $0<\varepsilon_0<1$. Then it follows directly from the comparison principle of heat equations that 
\begin{equation}
	v(x,t)\leq \frac{w(x,t)+K}{1-\varepsilon_0}
\end{equation} for all $x\in\Omega$ and $t\geq0.$ 

Our method relies on the comparison principles, which greatly differs from the energy method used in all previous literatures. The main strategy of our approach lies in the idea to compare the solution $v$ of a heat equation with an auxiliary function $w$, which is a solution of a Helmholtz  elliptic equation. To our knowledge, such an idea is used for the first time in related research and it is interesting that the application of comparison principle for elliptic equations also indispensable in the study of this fully parabolic system since we bring in the new variable $w$ satisfying an elliptic equation. Our approach makes fully use of the nonlinear coupling structure together with the decreasing property of $\gamma$ but needs no $L^p$-integrability of $u$. Morevoer, our method unveils an insight information of the nonlinear structure that degeneracy is prohibited in any finite time. This feature was firstly observed for the simplified parabolic-elliptic version of \eqref{chemo1} in our previous work \cite{FJ19} and is now verified by our comparison method  in the original fully parabolic system. Besides, we would like to stress that our  results on global existence as well as infinite-time blowup are both new for the fully parabolic system \eqref{chemo1} with asymptotically vanishing motilities since this problem has not been tackled before without any smallness assumptions or the presence of source terms.

The rest of the paper is organized as follows. In Section 2, we state our main results on problem \eqref{chemo1}. In Section 3, we provide some preliminary results and recall some useful lemmas. Then in Section 4 we use our comparison argument to derive the upper bounds of $v$. Uniform-in-time upper bounds of $v$ are also established under certain growth conditions on $1/\gamma$. Thanks to the upper bound of $v$, we are able to study global existence of classical solutions in Section 5. The last section is devoted to  the case $\gamma(v)=e^{-v}$, where the critical-mass phenomenon is proved in the two-dimensional setting.

\section{Main Results}
In this section, we state the main results cocerning global existence as well as infinite-blowup of problem \eqref{chemo1}. To begin with,  we introduce some notations and basic assumptions. Throughout this paper  we assume  that
\begin{equation}\label{ini}
(u_0,v_0)\in C^0(\overline\Omega)\times{W^{1,\infty}(\Omega)},\quad u_0\geq0,\; v_0\geq0 \quad  \mbox{in } \overline\Omega, \quad u_0\not\equiv0
\end{equation}
and for  $\gamma$ we  require
\begin{equation}\label{gamma0a}
\mathrm{(A0)}:\gamma(v)\in C^3[0,+\infty),\;\gamma(v)>0,\;\;\gamma'(v)\leq0\;\;\text{on}\;(0,+\infty).
\end{equation}
 and the following asymptotically vanishing property:
\begin{equation}\label{gamma}
\mathrm{(A1)}:\lim\limits_{s\rightarrow+\infty}\gamma(s)=0.
\end{equation}

Now we  state our first result on global existence of classical solutions in dimension two.
\begin{theorem}\label{TH1}
	Assume  $n=2$ with $\gamma(\cdot)$ satisfying $\mathrm{(A0)}$ and $\mathrm{(A1)}$.
	For any given initial data $(u_0,v_0)$ satisfying \eqref{ini}, system \eqref{chemo1} permits a unique global classical solution $(u,v)\in (C^0(\overline{\Omega}\times[0,\infty))\cap C^{2,1}(\overline{\Omega}\times(0,\infty)))^2$.
	
	In addition, if $1/\gamma$ satisfies the following growth condition:
	\begin{equation}\label{gamma2}\mathrm{(A2)}:\qquad\text{there is $k>0$ such that}
	\lim\limits_{s\rightarrow+\infty}s^{k}\gamma(s)=+\infty,
	\end{equation}then the global solution is uniformly-in-time bounded.
\end{theorem}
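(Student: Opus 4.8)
\textbf{Proof plan for Theorem \ref{TH1}.}
The idea is to realize the comparison scheme sketched in the Introduction and then feed the resulting pointwise upper bound on $v$ into a two-dimensional regularity bootstrap. First I would collect the standard preliminaries: quasilinear parabolic theory (Amann) provides a unique maximal classical solution $(u,v)$ on a time interval $[0,T_{\max})$ with $u\ge0$, $v\ge0$, and the extensibility criterion that $T_{\max}<\infty$ forces $\limsup_{t\nearrow T_{\max}}\|u(\cdot,t)\|_{L^\infty(\Omega)}=\infty$ or $\liminf_{t\nearrow T_{\max}}\inf_\Omega\gamma(v(\cdot,t))=0$; integrating the first equation over $\Omega$ gives the mass conservation $\int_\Omega u(\cdot,t)=\int_\Omega u_0=:\Lambda$ for all $t$; and I would fix the auxiliary function $w=(I-\Delta)^{-1}[u]\ge0$, which enjoys the same $C^{2,1}$-regularity as $u$ and satisfies the key identity \eqref{keyid}. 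The first real step is the upper bound for $w$: from \eqref{keyid} one has $\partial_t w=(I-\Delta)^{-1}[u\gamma(v)]-u\gamma(v)$, and since $u\gamma(v)\ge0$ and $u\gamma(v)\le\gamma(0)u$ pointwise (monotonicity of $\gamma$, $v\ge0$), the order-preserving property of $(I-\Delta)^{-1}$ gives $\partial_t w\le\gamma(0)(I-\Delta)^{-1}[u]=\gamma(0)w$; Gronwall's inequality in $t$ (for each fixed $x$) then yields $w(x,t)\le w_0(x)e^{\gamma(0)t}$, and $w_0=(I-\Delta)^{-1}[u_0]\in C^1(\overline\Omega)$ since $u_0\in C^0(\overline\Omega)$, so $w$ is bounded on $\Omega\times[0,T]$ for every finite $T$.

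The second, crucial step is the upper bound for $v$. Set $\Gamma(s):=\int_0^s\gamma(\sigma)\,d\sigma$, so $\Gamma'=\gamma\ge0$ and $\Gamma''=\gamma'\le0$. With $\mathcal{L}[g]:=g_t-\Delta g+g$, the two equations of \eqref{chemo1} give $\mathcal{L}[v-w]=u\gamma(v)-(I-\Delta)^{-1}[u\gamma(v)]$ (using $\mathcal{L}[v]=u$, $\mathcal{L}[w]=w_t+u$, and \eqref{keyid}), while a direct computation gives $\mathcal{L}[\Gamma(v)]=\gamma(v)(u-v)-\gamma'(v)|\nabla v|^2+\Gamma(v)$. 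Hence, for $z:=v-w-\Gamma(v)-K$ one obtains $\mathcal{L}[z]=-(I-\Delta)^{-1}[u\gamma(v)]+v\gamma(v)+\gamma'(v)|\nabla v|^2-\Gamma(v)-K$, which is $\le v\gamma(v)-\Gamma(v)-K\le -K\le0$ because $(I-\Delta)^{-1}[\,\cdot\,]\ge0$, $\gamma'\le0$, and $s\gamma(s)\le\int_0^s\gamma(\sigma)\,d\sigma=\Gamma(s)$ (again by monotonicity of $\gamma$). Choosing $K$ so large that $v_0-w_0-\Gamma(v_0)\le K$ on $\overline\Omega$ (possible since $v_0\in W^{1,\infty}(\Omega)$ and $\Gamma\ge0$) and noting $\partial_\nu z=0$, the parabolic maximum principle yields $z\le0$, i.e. $v-w\le\Gamma(v)+K$. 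Finally, since $\gamma(s)\to0$ as $s\to\infty$, for any fixed $\varepsilon_0\in(0,1)$ there is $C_{\varepsilon_0}$ with $\Gamma(s)\le\varepsilon_0 s+C_{\varepsilon_0}$, and therefore $v\le(w+C_{\varepsilon_0}+K)/(1-\varepsilon_0)$. In particular $v$ is bounded from above on $\Omega\times[0,T]$ for each finite $T$, so $\inf_\Omega\gamma(v(\cdot,t))$ stays positive there --- finite-time degeneracy is excluded.

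With degeneracy ruled out, the first equation is uniformly parabolic on each $[0,T]$ (one has $\gamma(M_T)\le\gamma(v)\le\gamma(0)$ and $|\gamma'|,|\gamma''|$ bounded on $[0,M_T]$, $M_T:=\sup_{\Omega\times[0,T]}v$), and from here I would run the classical two-dimensional bootstrap: test the first equation against $u^{p-1}$, control the cross term via the second equation together with parabolic $L^p$--$L^q$ estimates for $v$, and use the Gagliardo--Nirenberg inequality in dimension two (where the critical term $\int_\Omega u^{p+1}$ is absorbed by the dissipation $\int_\Omega\gamma(v)|\nabla u^{p/2}|^2$ up to lower-order terms) to obtain $\sup_{[0,T]}\|u(\cdot,t)\|_{L^p(\Omega)}<\infty$ for all $p<\infty$, hence $\nabla v\in L^\infty((0,T)\times\Omega)$, and finally $\sup_{[0,T]}\|u(\cdot,t)\|_{L^\infty(\Omega)}<\infty$ by a Moser/Alikakos iteration; parabolic Schauder theory then gives the asserted $C^{2,1}$ regularity, and the extensibility criterion forces $T_{\max}=\infty$. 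For the uniform-in-time statement under $\mathrm{(A2)}$, the only $t$-dependent ingredient above is the exponentially growing bound on $w$ (hence on $v$) from the first step; one removes it by using $\mathrm{(A2)}$, equivalently $1/\gamma(s)\le C(1+s^k)$, to close a time-uniform $L^p(\Omega)$ estimate on $u$ for some $p>n/2=1$ --- the algebraic control of $v$, and thus of $1/\gamma(v)$, by powers keeps the relevant coefficients bounded and produces an absorbing inequality of the form $\frac{d}{dt}\int_\Omega u^p\le -c\int_\Omega u^p+C$ with $c,C$ independent of $t$ --- and a uniform $L^p$ bound with $p>1$ propagates, through the second equation and the $L^p$--$L^\infty$ smoothing of $(I-\Delta)^{-1}$ and of the Neumann heat semigroup, to a uniform pointwise bound on $v$ and then to uniform bounds on the whole solution.

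I expect two main obstacles. Conceptually, the heart of the argument is the second step, the double comparison: one must verify carefully that $z$ is a genuine classical subsolution of $\mathcal{L}$ up to $t=0$ (regularity of $w$ and $v$ near the initial time), that the elementary inequality $s\gamma(s)\le\int_0^s\gamma$ together with the choice $\Gamma=\int_0^{\cdot}\gamma$ and the vanishing of $\gamma$ at infinity really yields $\Gamma(s)\le\varepsilon_0 s+C_{\varepsilon_0}$ with $\varepsilon_0<1$, and that the maximum principle applies on $\Omega\times[0,T]$ with homogeneous Neumann data. Technically, the delicate point is the time-uniform $L^p$ estimate for $u$ needed for the boundedness part, where $\mathrm{(A2)}$ has to be exploited quantitatively; by contrast, the finite-time bootstrap that yields $T_{\max}=\infty$, though lengthy, is routine once the pointwise bound on $v$ --- and hence nondegeneracy --- is in hand.
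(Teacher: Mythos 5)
Your comparison step is essentially the paper's argument, and in one respect is slightly cleaner: you take $\Gamma(s)=\int_0^s\gamma$, so that $v\gamma(v)\le\Gamma(v)$ is immediate from monotonicity of $\gamma$ and no additive constant such as the paper's $C_a(s_0)$ is needed in the subsolution inequality for $z=v-w-\Gamma(v)-K$. The paper instead chooses $\Gamma(s)=\int_a^s\gamma$ with $a$ such that $\gamma(a)<1$, which yields $\Gamma(s)\le\gamma(a)s$ without the extra constant $C_{\varepsilon_0}$ in the last linearization but then must track a constant $C_a(s_0)$ in the lower bound; the two bookkeepings are interchangeable. The key identities ($\mathcal{L}[v-w]=u\gamma(v)-(I-\Delta)^{-1}[u\gamma(v)]$, $\mathcal{L}[\Gamma(v)]=\gamma(v)(u-v)-\gamma'(v)|\nabla v|^2+\Gamma(v)$) match the paper's \eqref{compare0}--\eqref{comp0}, and the Gronwall bound $w\le w_0e^{\gamma(v_*)t}$ from \eqref{keyid} is exactly Lemma \ref{keylem1}. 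So the heart of the theorem --- exclusion of finite-time degeneracy --- is correctly reproduced.

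However, the two regularity steps after the pointwise bound on $v$ are glossed over in ways that, as written, do not close. First, in the two-dimensional $L^p$ estimate you assert that $\int_\Omega u^{p+1}$ is absorbed by $\int_\Omega\gamma(v)\,|\nabla u^{p/2}|^2$ via Gagliardo--Nirenberg. For $n=2$, GN gives
\[
\io u^{p+1}\le C_{GN}\,\|u\|_{L^1(\Omega)}\io|\nabla u^{p/2}|^2+C_{GN}\,\|u\|_{L^1(\Omega)}^{p+1},
\]
and the coefficient $C_{GN}\Lambda$ of the dissipation term is not small for general initial mass $\Lambda=\|u_0\|_{L^1}$. Absorption therefore fails unless the prefactor is made small, and the paper achieves this via the logarithmic refinement of Lemma \ref{fs}: after establishing $\sup_{t<T}\io u\log u\le C(T)$ (Lemma \ref{glm1}, itself requiring the maximal-regularity bound on $\io_0^T\|v\|_{H^2}^2$ drawn from Lemma \ref{est0}), one picks the free parameter $s$ large so that the coefficient $\sim(\log s)^{-1}\io u\log u$ multiplying the gradient term is as small as needed. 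Your sketch omits the entropy control and the refined inequality entirely; without them the absorption step is not available.

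Second, the uniform-in-time boundedness under $\mathrm{(A2)}$ is handled too loosely. You propose a time-uniform $\frac{d}{dt}\io u^p\le -c\io u^p+C$ using $1/\gamma(s)\le C(1+s^k)$, but this bound on $1/\gamma(v)$ is pointwise in $v$ and is large wherever $v$ is large; it does not directly "keep the coefficients bounded'' unless a uniform-in-time bound on $v$ is already at hand, which is precisely what one is trying to prove. The paper's route (Lemma \ref{lm41}) is more subtle: test the first equation against $w$, split $\io uw$ by Young's inequality into $\io\gamma(v)u^2$ plus $\io\gamma^{-1}(v)w^2$, bound the latter by $C\io w^{k+2}+C$ using the comparison $v\lesssim w+K$, and then control $\io w^{k+2}$ (and similar $L^\infty$ quantities of $w$) by the elliptic $L^q$ estimate Lemma \ref{lm2}, which is uniform in $t$ thanks to mass conservation; a uniform-Gronwall argument then yields the $t$-independent pointwise bound on $w$, hence on $v$, and only afterwards on $u$. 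You would need to supply that $w$-based argument (or an equivalent) before the $L^p$ estimate on $u$ can be made time-uniform.
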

\begin{remark}
	The above result still holds true if one replaces assumption $\mathrm{(A1)}$ by the following
\begin{equation}\label{gamma1}
\mathrm{(A1')}:\lim\limits_{s\rightarrow+\infty}\gamma(s)=\gamma_\infty<1.
\end{equation}	
\end{remark}
\begin{remark}
If $v_0>0$ in $\overline{\Omega}$,	thanks to the positive time-independent lower bound $v_*$ of $v$ for $(x,t)\in\overline{\Omega}\times[0,\infty)$ given in Lemma \ref{lowbound} in the next section, our existence and boundedness results also hold true if $\gamma(s)$ has singluarities at $s=0$, for example $\gamma(s)=s^{-k}$ with $k>0$. In such cases, we can simply replace $\gamma(s)$ by a new motility function $\tilde{\gamma}(s)$ which satisfies $\mathrm{(A0)}$ and coincides with $\gamma(s)$ for $s\geq\frac{v_*}{2}$.
\end{remark}
\begin{remark} 	
	 Our result generalizes the corresponding boundedness result in \cite{Anh19} established for the simplified parabolic-elliptic system with special motility $v^{-k}$ with any $k>0$ to more general functions satisfying  $\mathrm{(A0)}$,  $\mathrm{(A1)}$ and  $\mathrm{(A2)}$, for example, $\gamma(v)=\frac{1}{v^k\log(1+v)}$ with any $k>0.$
\end{remark}
\begin{remark}\label{rem_anytau}
Theorem \ref{TH1} is independent of the coefficients of the system. In particular, if the second equation of \eqref{chemo1} is replaced with 
$$
\tau v_t = \Delta v - v +u
$$
with $\tau>0$, Theorem \ref{TH1} is still valid for any $\tau>0$. See Remark \ref{compwv_tau}, Remark \ref{unifbddv_tau} and Remark \ref{rem_tau}.
\end{remark}
\begin{remark}
In the case $\gamma(v)=v^{-k}$ with $k>0$, the variant form reads
	\begin{equation}\label{variant2}
	u_t=\nabla\cdot\left[\gamma(v)(\nabla u-ku\nabla \log v)\right], 
	\end{equation}
	which resembles the classical Keller--Segel model with a logarithmic chemo-sensitivity:
\begin{equation}\label{logKS}
\begin{cases}
u_t=\nabla\cdot (\nabla u-ku\nabla \log v),\\
\tau v_t=\Delta v-v+u.
\end{cases}
\end{equation}
Indeed, they have the same stationary problem. 
As to the two dimensional Keller--Segel model with a logarithmic chemo-sensitivity, 
global existence and uniform-in-time boundedness of solutions were established for sufficiently small or sufficiently large $\tau>0$ in \cite{FS2016, fs2018}. 
Even global existence of solutions for any $\tau>0$ is still open.
On the other hand, Remark \ref{rem_anytau} claims global existence and uniform-in-time  boundedness of solutions to \eqref{chemo1} for any $\tau>0$. 
\end{remark}
In the three-dimensional case, we obtain existence of uniformly-in-time bounded classical solution with a stronger growth condition on $1/\gamma$.
\begin{theorem}\label{TH3d}
	Assume $n=3$ and $\gamma(\cdot)$ satisfies  $\mathrm{(A0)}$, $\mathrm{(A1)}$ and additionally 
	\begin{equation}\label{gamma3}
	\mathrm{(A3)}:\qquad	2|\gamma'(s)|^2\leq\gamma(s)\gamma''(s),\;\;\forall\;s>0.
	\end{equation}	For any given initial data $(u_0,v_0)$ satisfying \eqref{ini}, system \eqref{chemo1} permits a unique global classical solution $(u,v)\in (C^0(\overline{\Omega}\times[0,\infty))\cap C^{2,1}(\overline{\Omega}\times(0,\infty)))^2$ which is uniformly-in-time bounded.
\end{theorem}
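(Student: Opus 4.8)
The plan is to follow the same comparison-based roadmap sketched in the introduction, but with the stronger growth hypothesis $(\mathrm{A3})$ doing the heavy lifting that $(\mathrm{A2})$ cannot do in three dimensions. Since $(\mathrm{A0})$ and $(\mathrm{A1})$ hold, the argument of Section~4 (invoked via Theorem~\ref{TH1}'s machinery) already yields a pointwise bound $v(x,t)\le \frac{w(x,t)+K}{1-\eps_0}$ together with the exponential bound $w(x,t)\le w_0(x)e^{Ct}$, so finite-time degeneracy is excluded and a local classical solution exists on a maximal interval $[0,T_{\max})$. The first task is therefore to upgrade these time-dependent bounds to uniform-in-time bounds on $w$, and hence on $v$. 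The natural route is to obtain a uniform-in-time bound on some $\|u(\cdot,t)\|_{L^p(\Omega)}$ with $p>\tfrac n2=\tfrac32$; then elliptic regularity for $-\Delta w+w=u$ (and for $-\Delta v+v=u$) gives $w,v\in L^\infty(\Omega\times(0,\infty))$. To extract such an estimate I would exploit the Lyapunov-type structure: testing the first equation against a suitable power of $u$ (or against $\gamma(v)$-weighted quantities) and combining with the second equation. This is where $(\mathrm{A3})$ enters decisively.

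The key observation is that $(\mathrm{A3})$, i.e. $2(\gamma')^2\le\gamma\gamma''$, is precisely the condition that $-1/\gamma$ (equivalently, that $\gamma^{-1}$ is convex in a strong sense) or that $(\log(1/\gamma))$ has a controlled second derivative — more useful is that it makes $\Phi:=1/\gamma$ satisfy a differential inequality ensuring that the "bad" cross term arising when one differentiates $\int_\Omega \Phi(v)$ or tests with $\Phi(v)$-type weights can be absorbed. Concretely, I expect one wants to work with the functional $\int_\Omega u\,\Phi(v)^{\theta}\,dx$ or with $\int_\Omega (u+1)^p\gamma(v)^{q}\,dx$ for carefully tuned exponents, differentiate in time using both equations, integrate by parts keeping $\Delta(\gamma(v)u)$ intact as in the authors' philosophy, and use $(\mathrm{A3})$ to guarantee that the resulting quadratic form in $\nabla u$ and $\nabla v$ is non-negative (a discriminant/sign condition), so that only lower-order terms remain. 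Those lower-order terms are then controlled by the already-available bounds ($L^1$ mass conservation, the time-dependent bound on $v$, parabolic smoothing for $v$, and Gagliardo--Nirenberg in $n=3$). Closing this into a uniform-in-time Gronwall inequality for the chosen functional is the crux.

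Once a uniform bound on $\|u(\cdot,t)\|_{L^p}$ with $p>\tfrac32$ is in hand, the rest is standard bootstrapping: elliptic estimates give $\|v(\cdot,t)\|_{L^\infty}\le \overline v<\infty$ uniformly, hence $0<\gamma(\overline v)\le \gamma(v)\le \gamma(v_*)$ (using the lower bound $v_*$ if $v_0>0$, or simply $\gamma(v)\le\gamma(0)$ in general) so the first equation becomes a uniformly parabolic equation for $u$ with bounded coefficients; semigroup estimates and Moser--Alikakos iteration then yield a uniform-in-time bound on $\|u(\cdot,t)\|_{L^\infty}$, parabolic Schauder theory gives the $C^{2,1}$ regularity and global existence ($T_{\max}=\infty$), and uniqueness follows as in Theorem~\ref{TH1}. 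The main obstacle, and the only genuinely new point compared with $n=2$, is Step two: finding the right test functional and verifying that $(\mathrm{A3})$ makes the principal quadratic form non-negative while the dimension $n=3$ still permits the Gagliardo--Nirenberg interpolation needed to absorb the remainder; getting the exponents $p,q,\theta$ to line up with both constraints simultaneously is where the "at most linear growth of $1/\gamma$" heuristic in the statement of Theorem~\ref{TH3d} really comes from, and I expect that is the delicate computation the authors carry out.
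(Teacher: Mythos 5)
Your overall architecture (comparison method, an energy estimate in which $(\mathrm{A3})$ neutralizes a cross term, then bootstrap) is in the right spirit, but the logical ordering you propose is reversed relative to the paper, and this is a genuine gap, not a cosmetic one. You plan to get a uniform $L^p$ bound on $u$ with $p>3/2$ first, and only then extract the uniform bound on $v$ from elliptic/parabolic regularity. The paper does the opposite and for good reason: it first shows (Lemma~\ref{lemA23}) that $(\mathrm{A0})$, $(\mathrm{A1})$, $(\mathrm{A3})$ force $\gamma''\geq 0$ and $\gamma'<0$, whence $(1/\gamma)'\leq d$, so $1/\gamma$ grows at most linearly and $(\mathrm{A2})$ holds for every $k>1$; since Lemma~\ref{lm41} already gives a uniform-in-time bound on $v$ in $n=3$ whenever $(\mathrm{A2})$ holds with $k<2$, the uniform bound on $v$ (Corollary~\ref{cor1}) comes \emph{directly} from the comparison method and mass conservation, with no $L^p$ information on $u$ beyond $L^1$. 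Your proposed order is not clearly closable: in the energy identity $(\mathrm{A3})$ is tailored for, after Young's inequality absorbs the cross term $\int_\Omega u\gamma'(v)\nabla u\cdot\nabla v$ into $\int_\Omega\gamma(v)|\nabla u|^2$ and $\int_\Omega\gamma''(v)u^2|\nabla v|^2$, the residual bad term is $\int_\Omega u^2|\gamma'(v)|\,v\,dx$, which, absent a pointwise bound on $v$, is a cubic quantity you cannot control from $L^1$ conservation; the paper handles it only because $v$ is already known to be uniformly bounded at that stage.

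The concrete computation is also left unspecified in your sketch. The paper tests the first equation against $2u$ and the second against $-u^2\gamma'(v)$, then forms the combination $\int_\Omega(1+\lambda\gamma(v))u^2\,dx$ (not $\int u\,\Phi(v)^\theta$ or $\int(u+1)^p\gamma(v)^q$). The Young step produces the coefficient $\frac{(1+\lambda)^2|\gamma'|^2}{2\gamma}$ in front of $u^2|\nabla v|^2$, and $(\mathrm{A3})$ is exactly the discriminant condition
\[
\frac{(1+\lambda)^2|\gamma'(v)|^2}{2\gamma(v)}\leq\lambda\gamma''(v)
\]
at $\lambda=1$. The closing argument then applies the uniform Gronwall lemma using the averaged-in-time bound $\int_t^{t+\tau}\int_\Omega(1+\gamma(v))u^2\,dx\,ds\leq C$ furnished by \eqref{ubw2}, which itself rests on the uniform $v$ bound from the first step. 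Your intuition that $(\mathrm{A3})$ makes a quadratic form non-negative is correct, but you have not identified the specific combination whose discriminant it controls, nor confirmed that $\lambda=1$ works and that the remaining lower-order terms are absorbed; until those computations are done, and the ordering issue is repaired, the proof does not close.
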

\begin{remark}Note that $\mathrm{(A3)}$ is a more restrictive growth condition than $\mathrm{(A2)}$. Under assumptions $\mathrm{(A0)}$, $\mathrm{(A1)}$ and $\mathrm{(A3)}$, $1/\gamma(s)$ can grow at most linearly in $s$; see Lemma \ref{lemA23}.

	In fact when $n=3$, we can establish uniform-in-time boundedness of $v$ with  $\gamma(\cdot)$ satisfying  $\mathrm{(A0)}$,  $\mathrm{(A1)}$ and $\mathrm{(A2)}$ with any $0<k<2$. However, for technique reasons, we can now only achieve uniform-in-time bounds of $u$ with the help of assumption  $\mathrm{(A3)}$; see Section 5.3 for more details. 
\end{remark}
\begin{remark}	
	When $n=3$ and $\gamma(v)=v^{-k}$ with $k>0$,  $\mathrm{(A3)}$ is equivalent to a constraint $0<k\leq1.$ 
Comparing with the Keller--Segel model with a logarithmic chemo-sensitivity \eqref{logKS}, 
 the condition$\mathrm{(A3)}$ reduces to a restriction on the chemo-sensitivity coefficient $k$. 
Global existence of \eqref{logKS} is still open for large $k$ when $n\geq3$. 
We refer the readers to \cite{BBTW15, fs2018} for reviews of related topics.
\end{remark}

Last, we verify the following critical mass phenomenon for the case $\gamma(v)=e^{-v}$.
\begin{theorem}\label{TH4}
	Assume $n=2$, $\gamma(v)=e^{-v}$ and $(u_0,v_0)$ satisfies \eqref{ini}. Let
	\begin{equation}
	\Lambda_c=\begin{cases}
	8\pi\qquad\text{if}\;\Omega=B_R(0)\triangleq\{x\in\mathbb{R}^2;\;|x|<R\}\;\;\text{with}\;R>0\;\text{and}\;(u_0,v_0) \;\text{is radial in}\;x,\\
	4\pi\qquad\text{otherwise.}		\nn
	\end{cases}
	\end{equation} Then if $\Lambda\triangleq\int_\Omega u_0 dx<\Lambda_c$, the global classical solution of \eqref{chemo2a} is uniformly-in-time bounded. Moreover, the solution converges to an equilibrium as time goes to infinity, i.e., there is a solution $(u_s,v_s)$ to the stationary problem \eqref{steady}, such that
	\[
\lim_{t \rightarrow +\infty} (u(t),v(t)) = (u_s,v_s) \quad \mbox{{\rm in} }C^2(\overline{\Omega}).
\]

	On the other hand, there exists non-negative initial datum $(u_0,v_0)$ satisfying \eqref{ini} with $\Lambda\in(8\pi,\infty)\backslash4\pi\mathbb{N}$ such that the corresponding global classical solution blows up at time infinity. More precisely,
	\begin{equation*}
	\lim\limits_{t\nearrow +\infty}\|u(\cdot,t)\|_{L^\infty(\Omega)}=\limsup\limits_{t\nearrow +\infty}\|(I-\Delta)^{-1}[u](\cdot,t)\|_{L^\infty(\Omega)}=\limsup\limits_{t\nearrow +\infty}\|v(\cdot,t)\|_{L^\infty(\Omega)}=+\infty.
	\end{equation*}
\end{theorem}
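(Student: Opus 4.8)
\textbf{Proof proposal for Theorem \ref{TH4}.}

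The plan splits naturally into the subcritical boundedness/convergence part and the supercritical infinite-time blowup part, and both lean heavily on the Lyapunov functional $\mathcal{F}(u,v)$ from \eqref{Lyapunov1} together with the already-established global existence from Theorem \ref{TH1}. For the subcritical case, the first step is to exploit the dissipation identity \eqref{Lyapunov1}: since $\mathcal{F}$ is non-increasing along trajectories, one obtains an a priori bound on $\int_\Omega u\log u$ provided $\mathcal{F}$ is bounded below along the orbit. To see the latter, I would combine the logarithmic Hardy--Littlewood--Sobolev / Moser--Trudinger inequality in two dimensions with the mass constraint $\Lambda<\Lambda_c\in\{4\pi,8\pi\}$ — this is precisely the threshold at which the entropy term $\int_\Omega u\log u$ dominates the cross term $\int_\Omega uv$ after controlling $\frac12\|\nabla v\|_2^2+\frac12\|v\|_2^2$ via the elliptic estimate $v = (I-\Delta)^{-1}u$. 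The sharp constant $8\pi$ in the radial case on a ball versus $4\pi$ in general is the standard Moser--Trudinger dichotomy. Once $\|u\log u\|_{L^1}$ is uniformly bounded in time, standard elliptic regularity gives a uniform-in-time $W^{1,q}$ and then $L^\infty$ bound on $v$ (any $q>n=2$ suffices once $u\in L^1\log L^1$ is upgraded to $L^p$ for some $p>1$ via the entropy bound and a Gagliardo--Nirenberg argument, or more directly via the results feeding into Theorem \ref{TH1}); feeding this back into the first equation with $\gamma(v)=e^{-v}$ bounded below then yields uniform-in-time boundedness of $u$ by the machinery already developed in Section 5. For convergence to equilibrium, I would invoke a LaSalle-type argument: the bounded orbit together with parabolic smoothing gives relative compactness in $C^2(\overline\Omega)$, the $\omega$-limit set consists of steady states solving \eqref{steady}, and since along the flow $\int_0^\infty(\|v_t\|_2^2 + \int_\Omega ue^{-v}|\nabla\log u-\nabla v|^2)\,dt<\infty$, every element of the $\omega$-limit set is a critical point of $\mathcal{F}$; a standard {\L}ojasiewicz--Simon inequality argument (or, if one prefers to avoid analyticity subtleties, the fact that for $\Lambda<\Lambda_c$ the steady state is unique up to the known structure) upgrades subconvergence to full convergence.

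For the supercritical infinite-time blowup, the strategy is by contradiction: suppose the global solution (which exists by Theorem \ref{TH1}) is uniformly-in-time bounded. Then as above the orbit is relatively compact in $C^2(\overline\Omega)$ and the $\omega$-limit set is a nonempty compact connected set of solutions to the stationary problem \eqref{steady} with the prescribed mass $\Lambda$. The key point is to choose initial data with $\Lambda\in(8\pi,\infty)\setminus 4\pi\mathbb{N}$ for which \eqref{steady} has \emph{no} solution — this is the nonexistence side of the Moser--Trudinger / mean-field equation theory (the Liouville-type structure of $-\Delta v+v=\Lambda e^v/\int e^v$: solutions exist only for $\Lambda$ in certain ranges, and for $\Lambda\notin 4\pi\mathbb N$ outside those ranges the equation is unsolvable), giving a contradiction and hence unboundedness. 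To identify \emph{which} quantity blows up and to get the chain of equalities in the statement, I would argue that $\|v\|_{L^\infty}$ must be unbounded (else $\gamma(v)=e^{-v}$ stays bounded below, the first equation is uniformly parabolic, and one recovers boundedness of $u$ hence of $w=(I-\Delta)^{-1}u$ hence of $v$, contradiction); then the comparison bound $v \le (w+K)/(1-\varepsilon_0)$ from Section 4 forces $\limsup\|w\|_{L^\infty}=+\infty$, and since $w=(I-\Delta)^{-1}u$ with the $L^\infty\to L^\infty$ continuity of $(I-\Delta)^{-1}$ this forces $\|u\|_{L^\infty}\to+\infty$ along a sequence; finally, to promote $\limsup$ to $\lim$ for $\|u\|_{L^\infty}$ one uses that $\|u\|_{L^1}=\Lambda$ is conserved while the entropy $\int_\Omega u\log u$, controlled by $\mathcal{F}$ (which is monotone), must actually tend to $+\infty$ — because if it stayed bounded along a subsequence one could extract a bounded-in-$L\log L$ limit that, together with the dissipation going to zero, would be a steady state, again contradicting nonexistence; the monotonicity of $\mathcal F$ then rules out oscillation, yielding $\|u(\cdot,t)\|_{L^\infty}\to+\infty$ as $t\to\infty$.

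The main obstacle I anticipate is the supercritical part, specifically making rigorous the passage ``unbounded orbit $\Rightarrow$ the blowup is genuinely at $t=+\infty$ and manifests simultaneously in $u$, $w$, and $v$ with $\|u\|_\infty$ having a true limit rather than merely $\limsup$.'' The delicate issue is that a priori the orbit could be unbounded yet have bounded subsequences, so one needs the Lyapunov structure to exclude recurrence to bounded configurations: the monotone decrease of $\mathcal{F}$ along the flow, combined with the fact that any bounded $\omega$-limit configuration would have to be a steady state (via $\int_0^\infty \|v_t\|_2^2\,dt<\infty$ plus the dissipation term), is what closes this — but quantifying it requires care that the $L\log L$ norm of $u$ cannot dip back down, which I would handle by showing $\mathcal F(u(t),v(t))\to -\infty$ is \emph{impossible} when the orbit is bounded (so boundedness $\Leftrightarrow$ $\mathcal F$ bounded below $\Leftrightarrow$ existence of a steady state). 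A secondary technical point is verifying the precise free energy threshold computation that yields exactly $4\pi$ (resp.\ $8\pi$ radial): one must handle the weighted dissipation $e^{-v}$ harmlessly (it only helps, since it is positive) and ensure the elliptic regularity $v=(I-\Delta)^{-1}u$ does not lose the sharp constant — here the standard trick is to absorb $\frac12\|v\|_{H^1}^2 - \int uv$ using the second equation tested against $v$, reducing to the classical Keller--Segel free-energy estimate verbatim.
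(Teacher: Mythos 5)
Your subcritical argument is essentially the paper's: the Trudinger--Moser threshold $\Lambda_c$ controls $\mathcal{F}$ from below along the orbit (via \cite[Lemma 3.4]{Nagai97}), giving a time-uniform bound on $\int_\Omega u\log u$, after which one upgrades to $L^\infty$ bounds. One caution: you describe the step from $L\log L$-control of $u$ to $\|v\|_{L^\infty}\le C$ as ``standard elliptic regularity,'' but $v$ solves a heat equation, not an elliptic one, and the paper in fact runs the comparison machinery of Section~4 here — it multiplies the first equation by $w=(I-\Delta)^{-1}u$ to get a time-averaged bound on $\int e^{-v}u^2$, uses Trudinger--Moser to bound $\|w\|_{L^\infty}$ by $(\int e^{-v}u^2)^{1/2}$, applies the uniform Gronwall lemma to $w_t+ue^{-v}\le w$, and then invokes Lemma~\ref{vbd}. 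That is a genuine use of the $w$-structure rather than a routine regularity bootstrap.

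The supercritical part, however, has a fatal gap. You claim to choose $\Lambda\in(8\pi,\infty)\setminus 4\pi\mathbb{N}$ such that the stationary problem \eqref{steady} has \emph{no} solution, and derive the contradiction from that nonexistence. This is false: the constant pair $u\equiv v\equiv\Lambda/|\Omega|$ solves \eqref{steady} for \emph{every} $\Lambda>0$, so there is no nonexistence regime to exploit. What $\Lambda\notin 4\pi\mathbb{N}$ actually gives — and what the paper cites from Senba--Suzuki \cite{ssAMSA2000} — is a quantization/compactness statement: the solution set $\mathcal{S}(\Lambda)$ is bounded in $L^\infty$, so that $F_*(\Lambda):=\inf\{\mathcal{F}(u,v):(u,v)\in\mathcal{S}(\Lambda)\}$ is \emph{finite}. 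The blowup argument then hinges on constructing initial data $(u_0,v_0)$ of mass $\Lambda$ with $\mathcal{F}(u_0,v_0)<F_*(\Lambda)$: if the orbit were bounded, Proposition~\ref{prop:conv-solu-subseq} and monotonicity of $\mathcal F$ would produce a steady state with energy $\le\mathcal{F}(u_0,v_0)<F_*(\Lambda)$, contradicting the definition of $F_*$. Your proposal contains none of this construction, which is the technical heart of the supercritical case — the paper uses a truncated and rescaled Liouville bubble $u_\lambda=8\lambda^2/(1+\lambda^2|x|^2)^2$, $v_\lambda=\log u_\lambda$, and an explicit computation showing $\mathcal{F}(u_0,v_0)\le -16\pi a(a-1)\log\lambda+C\to-\infty$ as $\lambda\to\infty$ while keeping $\int u_0=\Lambda$ fixed. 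Without identifying that the relevant input is an energy barrier $F_*(\Lambda)$ rather than nonexistence, and without producing initial data beneath that barrier, the contradiction cannot be reached.
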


\section{Preliminaries}
In this section, we recall some useful lemmas. First, local existence and uniqueness of classical solutions to system \eqref{chemo1} can be
established by the standard fixed point argument and  regularity theory for parabolic equations. Similar proof can be found in \cite[Lemma 3.1]{Anh19} or \cite[Lemma 2.1]{JKW18}  and hence here we omit the detail here.
\begin{theorem}\label{local}
	Let $\Omega$ be a smooth bounded domain of $\mathbb{R}^n$. Suppose that $\gamma(\cdot)$ satisfies \eqref{gamma0a} and $(u_0,v_0)$ satisfies \eqref{ini}. Then there exists $T_{\mathrm{max}} \in (0, \infty]$ such that problem \eqref{chemo1} permits a unique non-negative classical solution $(u,v)\in (C^0(\overline{\Omega}\times[0,T_{\mathrm{max}}))\cap C^{2,1}(\overline{\Omega}\times(0,T_{\mathrm{max}})))^2$. Moreover, the following mass conservation holds
	\begin{equation*}
	\int_{\Omega}u(\cdot,t)dx=\int_{\Omega}u_0 dx
	\quad \text{for\ all}\ t \in (0,T_{\mathrm{max}}).
	\end{equation*}	
	If $T_{\mathrm{max}}<\infty$, then
	\begin{equation*}
	\lim\limits_{t\nearrow T_{\mathrm{max}}}\|u(\cdot,t)\|_{L^\infty(\Omega)}=\infty.
	\end{equation*}

\end{theorem}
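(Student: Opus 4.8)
The plan is to construct the local solution by a Banach fixed‑point argument in which the second component $v$ is the iteration variable; an equivalent route would be to invoke Amann's theory of quasilinear parabolic systems applied to the triangular coefficient structure (diagonal entries $\gamma(v)$ and $1$, off‑diagonal entry $u\gamma'(v)$), which is normally elliptic as soon as $\gamma(v)\ge\gamma_0>0$, but I will outline the more hands‑on approach used in \cite{Anh19,JKW18}. Using the variant form \eqref{decom} of the first equation, fix $T\in(0,1)$ and radii $R_0,R_1>0$ (to be chosen from $\|v_0\|_{W^{1,\infty}}$) and work on the closed set
\[
\mathcal{S}_T := \big\{\, \bar v \in C([0,T];C(\overline\Omega)) \cap L^\infty(0,T;W^{1,\infty}(\Omega)) \ :\ \bar v \ge 0,\ \bar v(\cdot,0)=v_0,\ \|\bar v\|_{L^\infty} \le R_0,\ \esssup_{t\in(0,T)} \|\nabla\bar v(\cdot,t)\|_{L^\infty} \le R_1 \,\big\}.
\]
Given $\bar v\in\mathcal S_T$, I would first solve the \emph{linear} problem $u_t = \nabla\cdot\big(\gamma(\bar v)\nabla u + u\,\gamma'(\bar v)\nabla\bar v\big)$, $\partial_\nu u=0$, $u(\cdot,0)=u_0$, and then the heat problem $v_t-\Delta v+v=u$, $\partial_\nu v=0$, $v(\cdot,0)=v_0$, setting $\Phi(\bar v):=v$. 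Any fixed point of $\Phi$ is a local solution of \eqref{chemo1}.

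For the $u$‑problem the point is uniform parabolicity: since $0\le\bar v\le R_0$ and $\gamma$ is continuous and positive, $0<\gamma(R_0)\le\gamma(\bar v)\le\gamma(0)$, while $\gamma(\bar v)$ and the drift $\gamma'(\bar v)\nabla\bar v$ are bounded. Linear divergence‑form parabolic theory then yields a unique solution $u$, continuous on $\overline\Omega\times[0,T]$ (only $u_0\in C^0$ is needed) and smooth for $t>0$; the weak maximum principle gives $u\ge0$; integrating over $\Omega$ with $\partial_\nu u=0$ gives $\int_\Omega u(\cdot,t)\,dx=\int_\Omega u_0\,dx$; and a Moser iteration bounds $\|u\|_{L^\infty(\Omega\times(0,T))}$ by $C(R_0,R_1,T)\|u_0\|_{L^\infty}$, with $C$ staying bounded as $T\to0$. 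For the $v$‑problem I would use the Duhamel representation together with the Neumann heat‑semigroup estimates $\|\nabla e^{t(\Delta-1)}f\|_{L^\infty}\le C\|\nabla f\|_{L^\infty}$ and $\|\nabla e^{t(\Delta-1)}g\|_{L^\infty}\le Ct^{-1/2}\|g\|_{L^\infty}$ to get $v\ge0$,
\[
\|v(\cdot,t)\|_{L^\infty}\le\|v_0\|_{L^\infty}+T\|u\|_{L^\infty(\Omega\times(0,T))},\qquad \|\nabla v(\cdot,t)\|_{L^\infty}\le C\|\nabla v_0\|_{L^\infty}+C\sqrt{T}\,\|u\|_{L^\infty(\Omega\times(0,T))}.
\]
Choosing $R_0:=\|v_0\|_{L^\infty}+1$, $R_1:=C\|\nabla v_0\|_{L^\infty}+1$ and then $T$ small makes both right‑hand sides $\le R_0,R_1$, so $\Phi(\mathcal S_T)\subseteq\mathcal S_T$.

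Next I would prove contraction. For $\bar v_1,\bar v_2\in\mathcal S_T$ with associated $u_i,v_i$, the difference $u_1-u_2$ solves a linear parabolic equation with zero initial data whose source involves $\gamma(\bar v_1)-\gamma(\bar v_2)$, $\gamma'(\bar v_1)-\gamma'(\bar v_2)$ and $\nabla\bar v_1-\nabla\bar v_2$ multiplied by $\nabla u_2$, $u_2$; the gradient difference is handled by an integration by parts that transfers the derivative onto $\bar v_1-\bar v_2$, so that, using the a priori bounds on $u_2$ (including $\nabla u_2\in L^2(\Omega\times(0,T))$), one gets $\|u_1-u_2\|\le C(T)\|\bar v_1-\bar v_2\|$ in the ambient norm with $C(T)\to0$; feeding this into the heat equation for $v_1-v_2$ gives $\|\Phi(\bar v_1)-\Phi(\bar v_2)\|\le L(T)\|\bar v_1-\bar v_2\|$ with $L(T)\to0$, hence a contraction for $T$ small, and Banach's theorem supplies a unique fixed point $v$ and thus a unique local $(u,v)$. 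A standard parabolic bootstrap then upgrades it: $u\in L^\infty$ gives $v\in C^{1+\beta,(1+\beta)/2}$ locally for $t>0$, hence $\gamma(v),\gamma'(v)\nabla v$ Hölder, hence (Schauder, using $\gamma\in C^3$) $u\in C^{2+\beta,1+\beta/2}$ and then $v\in C^{2+\beta,1+\beta/2}$, so $(u,v)\in(C^0(\overline\Omega\times[0,T])\cap C^{2,1}(\overline\Omega\times(0,T)))^2$. Gluing such solutions defines $T_{\mathrm{max}}$; if $T_{\mathrm{max}}<\infty$ but $\limsup_{t\nearrow T_{\mathrm{max}}}\|u(\cdot,t)\|_{L^\infty}<\infty$, then the $v$‑equation gives uniform bounds on $\|v\|_{L^\infty}$ and $\|\nabla v\|_{L^\infty}$ on $[0,T_{\mathrm{max}})$, hence $\gamma(v)\ge\gamma_0>0$ there, and restarting the local construction from a time near $T_{\mathrm{max}}$ with a step depending only on these bounds extends the solution past $T_{\mathrm{max}}$ — a contradiction; therefore $\|u(\cdot,t)\|_{L^\infty}\to\infty$ as $t\nearrow T_{\mathrm{max}}$.

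The hard part is not any single estimate but the functional‑analytic bookkeeping: the iteration space must be chosen so that, simultaneously, (i) the frozen‑coefficient $u$‑equation is well posed with quantitative bounds, (ii) the heat equation returns just enough spatial regularity of $v$ — a little beyond $L^\infty$, enough to control the drift $\gamma'(v)\nabla v$ — uniformly over the ball and with a modulus vanishing as $T\to0$, and (iii) the contraction constant can be made small. The tension is that the drift in the $u$‑equation depends on $\nabla\bar v$, while $\bar v$ is reconstructed only from $u\in L^\infty$; reconciling this (via the integration‑by‑parts device above and the $L^2$‑in‑space‑time bound on $\nabla u$) is what must be done carefully, and it is exactly why the construction is inherently local in time — and why a positive lower bound for $\gamma(v)$, secured here only by confining $v$ to a bounded range, is indispensable. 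The mere continuity (not Hölder continuity) of $u_0$ is a secondary technical point: it is the reason one uses $L^p$/semigroup estimates near $t=0$ and claims $C^{2,1}$ regularity only for $t>0$.
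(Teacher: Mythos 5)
Your proposal is a correct and reasonably complete outline of the standard Banach fixed-point construction, which is exactly what the paper invokes: the paper itself omits the proof of Theorem~\ref{local}, simply noting that it follows from ``the standard fixed point argument and regularity theory for parabolic equations'' and referring to \cite{Anh19} and \cite{JKW18} for details. Your choice to iterate in $\bar v$ (freeze $\bar v$, solve the divergence-form linear $u$-equation with ellipticity $\gamma(\bar v)\ge\gamma(R_0)>0$, feed $u$ into the heat equation, map back to $v$) is one of the standard variants used in those references; the self-map estimates, the mass conservation by integrating the no-flux $u$-equation, the Schauder bootstrap for $t>0$, and the extension-of-maximal-interval argument all match the expected scheme. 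One small imprecision worth flagging: in your contraction step the difference $u_1-u_2$ inherits a source containing $\big(\gamma(\bar v_1)-\gamma(\bar v_2)\big)\nabla u_2$, and since $\nabla u_2$ is only controlled in $L^2$ near $t=0$ (because $u_0$ is merely continuous), a genuine contraction in the full $L^\infty(0,T;W^{1,\infty})$ norm of $\mathcal S_T$ is not straightforward; the usual fix is to establish the contraction in a weaker norm (e.g.\ $L^\infty(0,T;L^2(\Omega))$) while $\mathcal S_T$ is kept closed and bounded in the stronger norm, and then use completeness of the strong-norm ball under the weak metric. With that standard refinement made explicit, your argument is sound and follows the same route the paper intends.
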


Next, we recall the following lemma given in \cite{BS,Anh19} about estimates for the solution of Helmholtz equations. Let $a_+=\max\{a,0\}$. Then we have
\begin{lemma}\label{lm2}
	Let $\Omega$ be  a smooth bounded domain in $\mathbb{R}^n$, $n\geq1$ and let $f\in C(\overline{\Omega})$ be a non-negative  function such that $\int_\Omega f dx>0$. If $z$ is a $C^2(\overline{\Omega})$ solution to
	\begin{equation}
	\begin{split}
		-\Delta z+z=f,\;\;x\in\Omega,\\
		\frac{\partial z}{\partial \nu}=0\;\;x\in\partial\Omega,
	\end{split}
	\end{equation}then if $1\leq q< \frac{n}{(n-2)_+}$, there exists a positive constant $C=C(n,q,\Omega)$ such that
	\begin{equation}
		\|z\|_{L^q(\Omega)}\leq C\|f\|_{L^1(\Omega)}.
	\end{equation}
\end{lemma}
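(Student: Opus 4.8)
The plan is to obtain the estimate $\|z\|_{L^q(\Omega)} \le C\|f\|_{L^1(\Omega)}$ by a duality argument combined with the $L^p$-elliptic regularity theory for the Helmholtz operator $I - \Delta$ under Neumann boundary conditions. Let $q'$ be the conjugate exponent of $q$, so that $q' > \frac{n}{2}$ precisely when $q < \frac{n}{n-2}$ (with the convention that $q' > n/2$ is automatic for $n \le 2$ since then $q$ may be any finite exponent). First I would fix an arbitrary $g \in C^\infty(\overline{\Omega})$ and let $\psi$ solve the dual problem
\begin{equation*}
-\Delta \psi + \psi = g \quad \text{in } \Omega, \qquad \partial_\nu \psi = 0 \quad \text{on } \partial\Omega.
\end{equation*}
By standard $L^p$-regularity for the Neumann problem, $\psi \in W^{2,q'}(\Omega)$ with $\|\psi\|_{W^{2,q'}(\Omega)} \le C\|g\|_{L^{q'}(\Omega)}$, and since $q' > n/2$ one has the Sobolev embedding $W^{2,q'}(\Omega) \hookrightarrow L^\infty(\Omega) \cap C(\overline{\Omega})$, hence $\|\psi\|_{L^\infty(\Omega)} \le C\|g\|_{L^{q'}(\Omega)}$.

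Next I would test the equation for $z$ against $\psi$ (or, equivalently, test the equation for $\psi$ against $z$) and use that the operator $I - \Delta$ with Neumann conditions is self-adjoint: integrating by parts twice and using $\partial_\nu z = \partial_\nu \psi = 0$,
\begin{equation*}
\int_\Omega z\, g \, dx = \int_\Omega z\,(-\Delta \psi + \psi)\, dx = \int_\Omega (-\Delta z + z)\,\psi\, dx = \int_\Omega f\,\psi\, dx.
\end{equation*}
Therefore
\begin{equation*}
\left| \int_\Omega z\, g\, dx \right| = \left| \int_\Omega f\,\psi\, dx \right| \le \|f\|_{L^1(\Omega)}\,\|\psi\|_{L^\infty(\Omega)} \le C\,\|f\|_{L^1(\Omega)}\,\|g\|_{L^{q'}(\Omega)}.
\end{equation*}
Taking the supremum over all $g \in C^\infty(\overline{\Omega})$ with $\|g\|_{L^{q'}(\Omega)} \le 1$ and invoking the density of smooth functions in $L^{q'}(\Omega)$ together with the $L^q$--$L^{q'}$ duality characterization of the norm yields $\|z\|_{L^q(\Omega)} \le C\|f\|_{L^1(\Omega)}$, with $C = C(n,q,\Omega)$. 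The hypotheses $f \ge 0$, $f \in C(\overline\Omega)$ and $\int_\Omega f\,dx > 0$ are not really needed for this inequality beyond ensuring $z$ is well-defined and the $L^1$-norm is positive; they are presumably included because the lemma is applied with $f = u$ or $f = u\gamma(v)$.

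The only genuinely delicate point is the endpoint bookkeeping on exponents: one must check that the strict inequality $q < \frac{n}{(n-2)_+}$ translates into $q' > \frac{n}{2}$, which is the exact threshold for the embedding $W^{2,q'} \hookrightarrow L^\infty$ to hold (for $n \ge 3$; for $n = 1, 2$ the claim is unconditional because $(n-2)_+ = 0$ makes the range of $q$ all of $[1,\infty)$, and $W^{2,q'} \hookrightarrow C(\overline\Omega)$ for every $q' > 1 \ge n/2$). So the main — and essentially only — obstacle is to invoke the right elliptic regularity and Sobolev embedding results with the correct exponents; everything else is routine duality. An alternative, fully self-contained route avoiding duality would be to represent $z$ via the Neumann Green's function (equivalently, the Bessel-type kernel of $(I-\Delta)^{-1}$ on $\Omega$) and estimate its $L^q$-norm directly using that the kernel lies in $L^q$ for $q < \frac{n}{n-2}$; since the cited references \cite{BS, Anh19} already record this, I would simply cite them after presenting the short duality argument above.
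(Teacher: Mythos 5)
Your duality argument is correct: the paper itself does not prove Lemma~3.2 but cites Br\'ezis--Strauss \cite{BS} and \cite{Anh19}, and the method you lay out (solve the dual Neumann problem, use $L^{q'}$ elliptic regularity plus the embedding $W^{2,q'}\hookrightarrow L^\infty$ valid exactly when $q'>n/2$, i.e.\ $q<\frac{n}{(n-2)_+}$, then pair and conclude by $L^q$--$L^{q'}$ duality) is precisely the standard route used in those references. The only small wrinkle is the endpoint $q=1$, where $q'=\infty$ and Calder\'on--Zygmund regularity is unavailable; but that case is immediate either from the bounded-domain embedding $L^{q}\hookrightarrow L^1$ with any $q>1$, or directly by integrating the equation and using $z\ge 0$ (maximum principle) to get $\|z\|_{L^1}=\|f\|_{L^1}$, so the proof stands.
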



A strictly positive uniform-in-time lower bound for $v$ was given in \cite[Lemma 2.1]{FS2016} provided that $v_0$ is strictly positive in $\overline{\Omega}$.
\begin{lemma}\label{lowbound}
	Assume that $(u_0,v_0)$ satisfies \eqref{ini} and moreover $v_0>0$ in $\overline{\Omega}$. If $(u,v)$ is the solution of \eqref{chemo1} in $\Omega \times (0,T)$,
	then  there exists some $v_* >0$ such that
	\begin{eqnarray*}
		\inf_{x\in \Omega} v(x,t ) \geq v_*>0\qquad
		\mbox{for all }t\in(0,T).
	\end{eqnarray*}
	Here the constant $v_*$ is independent of $T>0$.
\end{lemma}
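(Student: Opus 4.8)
The plan is to derive a pointwise lower bound for $v$ by comparing it with a subsolution built from the heat semigroup. First I would observe that since $u \geq 0$, the second equation $v_t - \Delta v + v = u$ gives $v_t - \Delta v + v \geq 0$, so $v$ is a supersolution of the linear problem $z_t - \Delta z + z = 0$ with the same Neumann boundary condition and initial datum $v_0$. By the comparison principle for the heat equation (with the zeroth-order term $+v$), this yields $v(x,t) \geq (e^{-t}e^{t\Delta}v_0)(x) = e^{-t}(e^{t\Delta}v_0)(x)$ for all $x \in \overline{\Omega}$ and $t \in (0,T)$, where $e^{t\Delta}$ denotes the Neumann heat semigroup. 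The point is that this lower bound is entirely independent of $u$ and of $T$.

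Next I would exploit the strict positivity hypothesis $v_0 > 0$ on the compact set $\overline{\Omega}$, which gives $\min_{\overline{\Omega}} v_0 =: m_0 > 0$. By the order-preserving property and mass behaviour of the Neumann heat semigroup, one has $e^{t\Delta}v_0 \geq e^{t\Delta}m_0 = m_0$ for all $t \geq 0$ (constants are stationary for $e^{t\Delta}$ under Neumann conditions). Hence $v(x,t) \geq m_0 e^{-t}$. This is positive but decays, so it does not by itself furnish a uniform-in-time bound; I would instead split the time interval. For $t \in (0,1]$ the bound $v(x,t) \geq m_0 e^{-1}$ already suffices. For $t \geq 1$, I would use the smoothing and positivity-improving property of $e^{t\Delta}$: there is a constant $c_1 = c_1(\Omega) > 0$ such that $e^{\Delta}\phi \geq c_1 \fint_\Omega \phi\,dx$ pointwise for every nonnegative $\phi \in C(\overline{\Omega})$ (a consequence of the strictly positive, bounded-below heat kernel at time $1$). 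Then, writing the mild (Duhamel) formulation $v(t) = e^{-t}e^{t\Delta}v_0 + \int_0^t e^{-(t-s)}e^{(t-s)\Delta}u(s)\,ds$ and keeping only the $[t-1,t]$ portion of the integral together with the conservation of mass $\int_\Omega u(s) = \Lambda$, I get, for $t \geq 1$,
\[
v(x,t) \;\geq\; \int_{t-1}^{t} e^{-(t-s)} \bigl(e^{(t-s)\Delta}u(s)\bigr)(x)\,ds \;\geq\; e^{-1} c_1 \frac{\Lambda}{|\Omega|} \int_{t-1}^{t} \,ds \;=\; \frac{e^{-1}c_1 \Lambda}{|\Omega|},
\]
where I used that $e^{(t-s)\Delta}u(s) \geq$ (something comparable to $\fint u(s)$) uniformly for $t-s \in [\tfrac12,1]$ after throwing away an innocuous sub-interval, or alternatively the semigroup lower bound $e^{\sigma\Delta}\phi \ge c(\sigma)\fint\phi$ with $c(\sigma)$ bounded below on $\sigma\in[\tfrac12,1]$. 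Taking $v_* := \min\{ m_0 e^{-1},\ e^{-1}c_1\Lambda/|\Omega| \}$ gives the claim, with $v_*$ depending only on $\Omega$, $m_0$ and $\Lambda$, hence independent of $T$.

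The main obstacle is the bookkeeping in the second step: making the lower bound uniform in time rather than merely positive. The naive comparison gives only the exponentially decaying bound $m_0 e^{-t}$, and one must feed in the production term $u$ through the Duhamel formula together with mass conservation to recover a time-independent floor; the quantitative positivity-improving estimate for $e^{\sigma\Delta}$ on the finite interval $\sigma \in [\tfrac12,1]$ (equivalently, the uniform lower bound on the Neumann heat kernel) is the one analytic input that needs care. Everything else is a routine application of the parabolic comparison principle.
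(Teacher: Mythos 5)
Your proposal is correct and uses the same argument as the reference the paper cites (\cite[Lemma~2.1]{FS2016}): Duhamel's formula for $v_t = \Delta v - v + u$, the pointwise lower bound on the Neumann heat kernel for times bounded away from zero, and conservation of the total mass $\int_\Omega u = \Lambda > 0$. The only minor bookkeeping point — which you already flag — is that the kernel lower bound fails as $t-s \to 0$, so the Duhamel integral must be restricted to $t-s \in [\tfrac12,1]$ (or similar), costing a harmless extra factor in the final constant $v_*$.
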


Then, we recall the following lemma given in \cite[Lemma 2.4]{FS2016}.
\begin{lemma}\label{fs}Let $n=2$ and $p\in(1,2)$. There exists $K_{Sob}>0$ such that
	for all $s>1$ and for all $t\in [0,T_{\mathrm{max}})$,
	\begin{align*}
	\io u^{p+1}
	\leq
	\dfrac{{K_{Sob}(p+1)}^2}{\log s}\io (u\log u +e^{-1})
	\io u^{p-2} {|\nabla u|}^2
	+6s^{p+1} |\Omega| +4K_{Sob}^2{|\Omega|}^{2-p}\|u_0\|^{p+1}_{L^1(\Omega)}.
	\end{align*}
\end{lemma}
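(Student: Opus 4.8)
\textbf{Proof proposal for Lemma~\ref{fs}.}

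The plan is to reduce the statement to a weighted interpolation inequality on the plane controlled by a logarithmic (Trudinger--Moser--type) bound together with the two-dimensional Gagliardo--Nirenberg inequality. Write $w = u^{(p+1)/2}$, so that $\io u^{p+1} = \|w\|_{L^2(\Omega)}^2$ and $\nabla w = \tfrac{p+1}{2}\,u^{(p-1)/2}\nabla u$, whence $\io u^{p-2}|\nabla u|^2 = \tfrac{4}{(p+1)^2}\io u^{-1}|\nabla w|^2$. The right-hand side thus wants to be read as $\tfrac{4K_{Sob}}{\log s}\,\big(\io u\log u + e^{-1}\big)\,\io u^{-1}|\nabla w|^2$ plus lower-order terms, and the factor $(p+1)^2/\log s$ there is exactly what compensates the $\tfrac{(p+1)^2}{4}$ coming out of $|\nabla w|^2$. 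So the real work is to bound $\|w\|_{L^2(\Omega)}^2$ in terms of $\io u^{-1}|\nabla w|^2$ with the entropy $\io u\log u$ playing the role of the ``size'' of the measure $u\,dx$.

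First I would split the domain according to the size of $u$. On the set $\{u \le s\}$ we have the crude bound $\io_{\{u\le s\}} u^{p+1} \le s^{p} \io u \le s^{p+1}|\Omega|$ after also using $\|u\|_{L^1}\le s|\Omega|$ if $s$ is large (or directly $s^p\|u_0\|_{L^1}$, absorbed into the stated $6s^{p+1}|\Omega|$ with room to spare). On the set $\{u>s\}$, one writes $\io_{\{u>s\}} u^{p+1}$ and applies the two-dimensional Gagliardo--Nirenberg--Sobolev inequality to $w$: $\|w\|_{L^2(\Omega)}^2 \le C_{GN}\big(\|\nabla w\|_{L^1(\Omega)}^2 + \|w\|_{L^1(\Omega)}^2\big)$ or, more usefully here, an $L^2$--$L^1$ interpolation $\|w\|_{L^2}^2 \le C_{GN}\|\nabla w\|_{L^1}\|w\|_{L^1} + C\|w\|_{L^1}^2$. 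The key point is to estimate $\|\nabla w\|_{L^1(\Omega)}$ by Cauchy--Schwarz \emph{against the weight} $u^{-1}$ restricted to $\{u>s\}$:
\begin{equation*}
\int_{\{u>s\}} |\nabla w|\,dx \le \Big(\int_{\{u>s\}} u^{-1}|\nabla w|^2\,dx\Big)^{1/2}\Big(\int_{\{u>s\}} u\,dx\Big)^{1/2},
\end{equation*}
and on $\{u>s\}$ one has $u \le \tfrac{1}{\log s}\,u\log u$ (valid once $s>1$, and sharpened to $\log s$ rather than $\log 1$ precisely because we threw away $\{u\le s\}$), so $\int_{\{u>s\}} u \le \tfrac{1}{\log s}\io u\log u \le \tfrac{1}{\log s}\io(u\log u+e^{-1})$. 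Feeding this back, together with $\|w\|_{L^1(\Omega)}\le |\Omega|^{1/2}\|w\|_{L^2(\Omega)}^{?}$ handled by Young's inequality to absorb the resulting $\|w\|_{L^2}$ on the left, and $\|w\|_{L^1(\Omega)}^2 \le |\Omega|^{2-p}\|u\|_{L^1(\Omega)}^{p+1}$ (here one uses $1<p<2$ to make the Hölder exponents work and recovers the constant $4K_{Sob}^2|\Omega|^{2-p}\|u_0\|_{L^1}^{p+1}$ after conservation of mass), yields exactly the asserted inequality with $K_{Sob}$ chosen as a fixed multiple of $C_{GN}$.

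The main obstacle is bookkeeping the constants so that the final coefficient in front of the product $\big(\io(u\log u+e^{-1})\big)\big(\io u^{p-2}|\nabla u|^2\big)$ comes out as $\tfrac{K_{Sob}(p+1)^2}{\log s}$ uniformly for $s>1$ and all relevant $p$, rather than merely as some $C(p,s)$; this forces one to be careful that (i) the GN constant $C_{GN}$ does not degenerate, (ii) the Young's-inequality absorption of $\|w\|_{L^2(\Omega)}$ on the left is done with a fixed small parameter, and (iii) the ``bad'' lower-order pieces are genuinely of the harmless forms $s^{p+1}|\Omega|$ and $|\Omega|^{2-p}\|u_0\|_{L^1}^{p+1}$. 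A secondary technical point is justifying the Sobolev/GN inequality for the possibly-not-smooth function $w=u^{(p+1)/2}$ — this is routine since $u(\cdot,t)\in C^2(\overline\Omega)$ for $t\in(0,T_{\mathrm{max}})$ and is nonnegative, so $w\in W^{1,1}(\Omega)$ and the inequality applies directly. I expect no essential difficulty beyond this constant-chasing, and the structure above follows the standard Trudinger--Moser-type splitting used for Keller--Segel entropy estimates.
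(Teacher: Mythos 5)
You are proving a quoted result: the paper does not prove Lemma \ref{fs} but recalls it from \cite[Lemma 2.4]{FS2016}, whose proof rests on the two-dimensional embedding $W^{1,1}(\Omega)\hookrightarrow L^{2}(\Omega)$ applied to a \emph{truncated} function. This truncation is exactly what is missing in your argument, and its absence is a genuine gap. You apply the Gagliardo--Nirenberg/Sobolev inequality to $w=u^{(p+1)/2}$ on all of $\Omega$, but then estimate only $\int_{\{u>s\}}|\nabla w|$; these two steps do not fit together. If you keep the full $\|\nabla w\|_{L^1(\Omega)}$ that the inequality requires, the portion over $\{u\le s\}$ is controlled by Cauchy--Schwarz only as $\bigl(\io u^{-1}|\nabla w|^2\bigr)^{1/2}\|u_0\|_{L^1(\Omega)}^{1/2}$, with no $1/\sqrt{\log s}$ gain, and after the Young absorption you are left with a term of the form $C\,|\Omega|\,\|u_0\|_{L^1(\Omega)}\io u^{p-2}|\nabla u|^2$ carrying a fixed constant independent of $s$. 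Such a term is not of the admissible form $\tfrac{K_{Sob}(p+1)^2}{\log s}\io(u\log u+e^{-1})\io u^{p-2}|\nabla u|^2$, and it destroys the purpose of the lemma: in Lemma \ref{lemma_LP} the coefficient of the Fisher-type term must become small as $s\to\infty$ so that it can be absorbed by the dissipation. If instead you restrict the Sobolev inequality to the set $\{u>s\}$, that is not legitimate either, since this level set is not a fixed Lipschitz domain and $w$ does not vanish on its boundary. The missing idea is to apply $W^{1,1}\hookrightarrow L^{2}$ to $f=(u-s)_+^{(p+1)/2}$, whose weak gradient is supported in $\{u>s\}$ (so that Cauchy--Schwarz against the weight $u^{-1}$ really produces the factor $\int_{\{u>s\}}u$), and then recover $\io u^{p+1}$ from $\io (u-s)_+^{p+1}$ via $u\le (u-s)_+ + s$; this is also where the lower-order term $6s^{p+1}|\Omega|$ originates.

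Two further points. The bound $\|w\|_{L^1(\Omega)}^2\le |\Omega|^{2-p}\|u\|_{L^1(\Omega)}^{p+1}$ is false for $p\in(1,2)$: since $(p+1)/2>1$, $\io u^{(p+1)/2}$ cannot be controlled by the mass and $|\Omega|$ alone (concentrating a fixed mass on a small set makes it arbitrarily large); this term must be handled by interpolation, e.g.\ $\|w\|_{L^1(\Omega)}\le \|u\|_{L^1(\Omega)}^{(p+1)/(2p)}\|w\|_{L^2(\Omega)}^{(p-1)/p}$ followed by a Young absorption into the left-hand side, or it is taken care of within the truncated framework. Also, the chain $\int_{\{u>s\}}u\le\frac{1}{\log s}\io u\log u$ is not quite correct because $u\log u<0$ where $u<1$; the correct route uses the pointwise inequality $u\log u+e^{-1}\ge0$, namely $\int_{\{u>s\}}u\le\frac{1}{\log s}\int_{\{u>s\}}(u\log u+e^{-1})\le\frac{1}{\log s}\io(u\log u+e^{-1})$, which is precisely why the summand $e^{-1}$ appears in the statement. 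The first paragraph's issue is structural and must be repaired by the truncation; the second paragraph's issues are fixable but, as written, your argument does not yield the inequality with the stated form of the right-hand side.
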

In addition, we need  the following uniform Gronwall inequality \cite[Chapter III, Lemma 1.1]{Temam} to deduce uniform-in-time estimates for the solutions.
\begin{lemma}\label{uniformGronwall}
	Let $g,h,y$ be three positive locally integrable functions on $(t_0,\infty)$ such that $y'$ is locally integrable on $(t_0,\infty)$ and the following inequalities are satisfied:
	\begin{equation*}
	y'(t)\leq g(t)y(t)+h(t)\;\;\forall\;t\geq t_0,
	\end{equation*}
	\begin{equation*}
	\int_t^{t+r}g(s)ds\leq a_1,\;\;\int_t^{t+r}h(s)ds\leq a_2,\;\;\int_t^{t+r}y(s)ds\leq a_3,\;\;\forall \;t\geq t_0
	\end{equation*}	where $r,a_i$, $(i=1,2,3)$ are positive constants. Then
	\begin{equation*}
	y(t+r)\leq \left(\frac{a_3}{r}+a_2\right)e^{a_1},\;\;\forall t\geq t_0.
	\end{equation*}
\end{lemma}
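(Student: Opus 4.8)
The plan is to establish this by the classical integrating-factor argument on the sliding window $[t,t+r]$, followed by an averaging step. Fix an arbitrary $t\geq t_0$. Since $y'$ is locally integrable, $y$ is absolutely continuous on $[t,t+r]$, so for every $s\in[t,t+r]$ I may multiply the hypothesis $y'(s)\leq g(s)y(s)+h(s)$ by the positive, absolutely continuous integrating factor $E(s):=\exp\bigl(-\int_t^{s}g(\tau)\,d\tau\bigr)$ and rewrite it, using the product rule valid for a.e.\ $s$, as
\[
\frac{d}{ds}\bigl(E(s)y(s)\bigr)=E(s)\bigl(y'(s)-g(s)y(s)\bigr)\leq E(s)h(s)\qquad\text{for a.e. }s\in[t,t+r].
\]

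Next I integrate this differential inequality from a variable point $s\in[t,t+r]$ to the endpoint $t+r$, obtaining
\[
E(t+r)\,y(t+r)-E(s)\,y(s)\leq \int_{s}^{t+r}E(\tau)h(\tau)\,d\tau .
\]
This is exactly where the positivity hypotheses are used: because $g\geq0$ we have $0<E(\tau)\leq1$ for all $\tau\in[t,t+r]$, and moreover $E(t+r)=\exp\bigl(-\int_t^{t+r}g\bigr)\geq e^{-a_1}$ by the assumed bound on $\int_t^{t+r}g$. Discarding $E(\tau)\leq1$ and $E(s)\leq1$ (legitimate since $y,h\geq0$) and using $\int_s^{t+r}h\leq\int_t^{t+r}h\leq a_2$ gives $e^{-a_1}\,y(t+r)\leq E(t+r)y(t+r)\leq y(s)+a_2$, that is,
\[
e^{-a_1}\,y(t+r)\leq y(s)+a_2\qquad\text{for every }s\in[t,t+r].
\]

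Finally I average over $s\in[t,t+r]$: integrating the last inequality in $s$, dividing by $r$, and invoking $\int_t^{t+r}y(s)\,ds\leq a_3$ yields $e^{-a_1}y(t+r)\leq \tfrac{a_3}{r}+a_2$, hence
\[
y(t+r)\leq\Bigl(\frac{a_3}{r}+a_2\Bigr)e^{a_1},
\]
which is the asserted bound, valid for the arbitrary $t\geq t_0$. I do not expect any genuine obstacle here: the argument is elementary and self-contained. The only points requiring a little care are the regularity bookkeeping — noting that local integrability of $y'$ makes $y$ absolutely continuous on compact subintervals so that the fundamental theorem of calculus and the product rule are licit a.e.\ — and the clean use of positivity of $g$ (and of $h,y$) to keep the integrating factor $E$ trapped between $e^{-a_1}$ and $1$ on the window, which is precisely what converts the three integral bounds into the desired pointwise estimate.
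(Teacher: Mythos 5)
Your argument is correct and complete: it is precisely the classical integrating-factor proof of the uniform Gronwall lemma, with the averaging over $s\in[t,t+r]$ converting the integral bound on $y$ into the pointwise bound at $t+r$. The paper does not prove this statement itself but cites Temam (Chapter III, Lemma 1.1), and your proof coincides with the standard argument given there, so there is nothing further to reconcile.
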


\section{The Comparison Method and the Upper Bound of $v$}
In this section, we establish the upper bounds of $v$ by our comparison method as illustrated in the Introduction. To begin with, we define an auxiliary variable $w(x,t)$, which is the unique non-negative solution of the following Helmholtz equation:
\begin{equation*}
\begin{cases}
-\Delta w+w=u, &x\in\Omega,\;t>0,\\
\partial_\nu w=0,&x\in\partial\Omega,\;t>0.
\end{cases}
\end{equation*} 
Then we derive the key identity and establish a point-wise upper bound  for $w$ as follow. 
Here and in the sequel, $v_*=0$ if $v_0\geq0$  and $v_*>0$ if $v_0>0$ in $\overline{\Omega}$ due to Lemma \ref{lowbound}.

\begin{lemma}\label{keylem1}Assume $n\geq1$. For any $0<t<T_{\mathrm{max}}$, there holds
	\begin{equation}\label{var0}
	w_t+\gamma(v)u=(I-\Delta)^{-1}[\gamma(v)u].
	\end{equation}
	Moreover, for  any $x\in\Omega$ and  $t\in[0,T_{\mathrm{max}})$, we have
	\begin{equation}\label{ptesta}
		w(x,t)\leq w_0(x)e^{\gamma(v_*)t}.
	\end{equation}
\end{lemma}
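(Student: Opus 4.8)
The plan is to establish the identity \eqref{var0} first, and then exploit it together with the elliptic comparison principle and Gronwall's lemma to get the pointwise bound \eqref{ptesta}. For the identity, I would apply the operator $(I-\Delta)^{-1}$ to both sides of the first equation $u_t=\Delta(\gamma(v)u)$ in \eqref{chemo1}. Since $w=(I-\Delta)^{-1}[u]$ and the operator $(I-\Delta)^{-1}$ commutes with $\partial_t$ (the solution $(u,v)$ being classical on $(0,T_{\mathrm{max}})$, so $w_t=(I-\Delta)^{-1}[u_t]$ by elliptic regularity and differentiation under the solution operator), the left side becomes $w_t$. For the right side, write $\Delta(\gamma(v)u)=-(I-\Delta)[\gamma(v)u]+\gamma(v)u$, so that $(I-\Delta)^{-1}[\Delta(\gamma(v)u)]=-\gamma(v)u+(I-\Delta)^{-1}[\gamma(v)u]$. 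Rearranging gives exactly $w_t+\gamma(v)u=(I-\Delta)^{-1}[\gamma(v)u]$, which is \eqref{var0}. One should note that all these manipulations are justified because $\gamma(v)u$ is continuous up to the boundary and the Neumann boundary conditions are preserved by $(I-\Delta)^{-1}$.

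For the pointwise bound, I would rewrite \eqref{var0} as $(I-\Delta)[w_t]+(I-\Delta)[\gamma(v)u]=\gamma(v)u$ — wait, more directly: apply $(I-\Delta)$ to \eqref{var0} to obtain $(I-\Delta)[w_t]=\gamma(v)u-(I-\Delta)[\gamma(v)u]+\ldots$; actually the cleanest route is to go back to \eqref{var0} itself and observe that $h:=(I-\Delta)^{-1}[\gamma(v)u]$ solves $-\Delta h+h=\gamma(v)u\geq 0$, hence by the elliptic maximum principle $h\geq 0$, so from \eqref{var0} we get $w_t\leq (I-\Delta)^{-1}[\gamma(v)u]$. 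Now I would use the monotonicity of $\gamma$: since $v\geq v_*$ (with $v_*=0$ in general, or $v_*>0$ if $v_0>0$, by Lemma \ref{lowbound}) and $\gamma'\leq 0$, we have $\gamma(v)\leq\gamma(v_*)$, so $\gamma(v)u\leq\gamma(v_*)u$ pointwise. By the comparison principle for the Helmholtz operator (Lemma-type monotonicity of $(I-\Delta)^{-1}$ on nonnegative data), $(I-\Delta)^{-1}[\gamma(v)u]\leq\gamma(v_*)(I-\Delta)^{-1}[u]=\gamma(v_*)\,w$. Combining, $w_t\leq\gamma(v_*)\,w$ pointwise in $\Omega\times(0,T_{\mathrm{max}})$.

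Fixing $x\in\Omega$ and integrating this differential inequality in $t$ (Gronwall), we obtain $w(x,t)\leq w(x,0)e^{\gamma(v_*)t}=w_0(x)e^{\gamma(v_*)t}$, which is \eqref{ptesta}; continuity of $w$ up to $t=0$ handles the endpoint. The main obstacle, modest as it is here, is making the operator manipulations rigorous: justifying that $(I-\Delta)^{-1}$ commutes with $\partial_t$ on the classical solution and that the order-preserving property of $(I-\Delta)^{-1}$ applies in the pointwise (not merely $L^p$) sense — both follow from standard elliptic Schauder/maximum-principle theory given the regularity asserted in Theorem \ref{local}, but they are the points that need to be stated carefully. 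Everything else is the decreasing property of $\gamma$ plus Gronwall.
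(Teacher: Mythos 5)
Your proposal is correct and follows essentially the same route as the paper: apply $(I-\Delta)^{-1}$ to the first equation to get \eqref{var0}, use $\gamma(v)\le\gamma(v_*)$ together with the order-preserving property of $(I-\Delta)^{-1}$ to bound $(I-\Delta)^{-1}[\gamma(v)u]\le\gamma(v_*)w$, drop the nonnegative term $\gamma(v)u$, and apply Gronwall pointwise in $x$. One small slip in your wording: the step $w_t\le(I-\Delta)^{-1}[\gamma(v)u]$ follows from $\gamma(v)u\ge 0$ in \eqref{var0}, not from the nonnegativity of $h=(I-\Delta)^{-1}[\gamma(v)u]$ as your "hence $h\ge 0$, so\dots" phrasing suggests; you do observe $\gamma(v)u\ge 0$, so the conclusion stands, but the causal chain as written is reversed.
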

\begin{proof} The proof was already given in our previous paper \cite{FJ19}. For the completeness of the present work, we report in detail here.
	First,  the key identity \eqref{var0} follows by taking $(I-\Delta)^{-1}$ on both sides of the first equation in \eqref{chemo1}. Here, $\Delta$ is the Laplacian operator with homogeneous Neumann boundary conditions.
	 
Note that $v$ is non-negative due to the maximum principle of heat equations.    Since $\gamma$ is non-increasing in $v$, there holds $\gamma(v)\leq \gamma(v_*)$ for all $(x,t)\in\Omega\times[0,T_\mathrm{max})$. 
As a result, we infer by comparison principle of elliptic equations that for any $(x,t)\in\Omega\times [0,T_{\mathrm{max}})$, \begin{equation*}(I-\Delta)^{-1}[\gamma(v)u]\leq (I-\Delta)^{-1}[\gamma(v_*)u]=\gamma(v_*)w\end{equation*} and it follows from \eqref{var0} that
\begin{equation}\label{var0b}
w_t+\gamma(v)u\leq \gamma(v_*) w.
\end{equation} 
Since $\gamma(v)u\geq0$,  an application of Gronwall's inequality together with \eqref{var0b} gives rise to
	\begin{equation*}
w(x,t)\leq w_0(x)e^{\gamma(v_*)t},
\end{equation*} 	
which completes the proof.
\end{proof}

Next, we aim to compare $v$ with the bounded auxiliary function $w$. 
Observing that $\lim\limits_{s\rightarrow+\infty}\gamma(s)=0$, we can fix some $a>0$ such that $0<\gamma(a)<1$ and for any $s\geq0$ we define
\begin{equation*}
\Gamma(s)=\int_a^s\gamma(\eta)d\eta.
\end{equation*}
Then, one can easily verify the following relation between $\gamma$ and $\Gamma.$
\begin{lemma}\label{lemGam}
	Under the assumption of $(\mathrm{A0})$ and $(\mathrm{A1})$, for any $s_0\in[0,a)$ there is $C_a(s_0)>0$ depending on $a$ and $s_0$ such that 
	\begin{equation}\label{Gamma0}
	s\gamma(s)-C_a(s_0)\leq \Gamma(s)\leq \gamma(a)s,\;\;\forall\;s\geq s_0.
	\end{equation}
\end{lemma}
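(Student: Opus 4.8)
The plan is to treat the two inequalities in \eqref{Gamma0} separately, since each reduces to an elementary monotonicity argument once the right quantity is identified.

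For the upper bound $\Gamma(s)\le \gamma(a)s$, I would distinguish the cases $s\ge a$ and $s_0\le s<a$. When $s\ge a$, monotonicity of $\gamma$ (from $\mathrm{(A0)}$) gives $\Gamma(s)=\int_a^s\gamma(\eta)\,d\eta\le \gamma(a)(s-a)\le \gamma(a)s$, the last step using $\gamma(a)>0$ and $a>0$. When $s_0\le s<a$ we have $\Gamma(s)=-\int_s^a\gamma(\eta)\,d\eta\le 0\le \gamma(a)s$, because $\gamma>0$ and $s\ge s_0\ge 0$. This settles the right-hand inequality, with no additive constant required.

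For the lower bound the key step is to introduce the auxiliary function $h(s):=s\gamma(s)-\Gamma(s)$ on $[0,\infty)$. By $\mathrm{(A0)}$ the function $\gamma$ is of class $C^3$, so $h$ is differentiable with $h'(s)=\gamma(s)+s\gamma'(s)-\gamma(s)=s\gamma'(s)\le 0$ for every $s\ge 0$, since $\gamma'\le 0$. Hence $h$ is non-increasing on $[0,\infty)$, which yields $h(s)\le h(s_0)$ for all $s\ge s_0$; that is, $s\gamma(s)-\Gamma(s)\le s_0\gamma(s_0)-\Gamma(s_0)$. Setting
$$
C_a(s_0):=s_0\gamma(s_0)-\Gamma(s_0)=s_0\gamma(s_0)+\int_{s_0}^{a}\gamma(\eta)\,d\eta
$$
then gives exactly the left-hand inequality in \eqref{Gamma0}. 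Positivity and the claimed dependence of $C_a(s_0)$ are immediate: $s_0\gamma(s_0)\ge 0$ because $s_0\ge 0$ and $\gamma>0$, while $\int_{s_0}^{a}\gamma(\eta)\,d\eta>0$ since $s_0<a$ and $\gamma$ is strictly positive; and the constant manifestly depends only on $a$ and $s_0$ through $\gamma$.

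I do not anticipate any genuine obstacle: the entire content is the sign computation $h'(s)=s\gamma'(s)\le 0$, which uses precisely the monotonicity hypothesis $\gamma'\le 0$ in $\mathrm{(A0)}$. (Assumption $\mathrm{(A1)}$ plays no role in this particular lemma; it enters only through the earlier choice of $a$ with $0<\gamma(a)<1$.)
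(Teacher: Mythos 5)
Your proof is correct, and for the lower bound it takes a genuinely slicker route than the paper. For the upper bound both you and the paper exploit the monotonicity of $\gamma$ and split into $s\ge a$ and $s_0\le s<a$ (the paper phrases the $s\ge a$ case via a first-order Taylor expansion with negative remainder, which amounts to the same thing as your direct estimate $\int_a^s\gamma\le\gamma(a)(s-a)$). For the lower bound, however, the paper proceeds by hand: for $s\ge a$ it bounds $\Gamma(s)\ge\gamma(s)(s-a)$ and then absorbs the error into $a\gamma(a)$, and for $s_0\le s\le a$ it separately bounds $s\gamma(s)+\int_s^a\gamma$ by $2a\gamma(s_0)$, finally taking a maximum. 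You instead observe that $h(s):=s\gamma(s)-\Gamma(s)$ satisfies $h'(s)=s\gamma'(s)\le 0$, so $h$ is non-increasing and the constant $C_a(s_0)=h(s_0)=s_0\gamma(s_0)+\int_{s_0}^a\gamma(\eta)\,d\eta$ works uniformly for all $s\ge s_0$ with no case split. This is cleaner, gives the sharp constant (since it is exactly $\sup_{s\ge s_0}h(s)$), and makes the mechanism transparent in a single line. Your parenthetical remark that $\mathrm{(A1)}$ is not used in the lemma itself, but only upstream to guarantee the existence of $a$ with $0<\gamma(a)<1$, is also accurate.
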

\begin{proof}
	First, we assert that there is $C_a>0$ depending on $a$ such that
		\begin{equation}\label{Gamma1}
	s\gamma(s)-C_a\leq \Gamma(s)\leq \gamma(a)s,\;\;\forall\;s\geq a.
	\end{equation}
Indeed, by Taylor expansion we infer that
	\begin{equation}
	\Gamma(s)=\gamma(a)(s-a)+\frac12\gamma'(a\theta+s(1-\theta))(s-a)^2,\;\;\text{for some}\;\theta\in(0,1).
	\end{equation}
	Then due to the fact $\gamma'\leq0$, we obtain that
	\begin{equation*}
	\Gamma(s)\leq \gamma(a)(s-a),
	\end{equation*}
	which yields the most right-hand side of \eqref{Gamma1}.
	
	On the other hand,	since $\gamma$ is decreasing, we infer that for $s\geq a$,
	\begin{equation*}
	\Gamma(s)=\int_a^s\gamma(\eta)d\eta\geq\gamma(s)(s-a)=s\gamma(s)-a\gamma(s).
	\end{equation*}
	Therefore, when $s\geq a$, using the fact  $\gamma(s)\leq \gamma(a)$,
	\begin{equation}
	s\gamma(s)\leq \Gamma(s)+a\gamma(s)\leq \Gamma(s)+a\gamma(a).
	\end{equation}

	Then in order to establish \eqref{Gamma0}, it remains to check the case $s_0\leq s\leq a$. The most right-hand side is trivial since $\Gamma(s)\leq 0$ by definition when $s_0\leq s\leq a$. On the other hand  when $s_0\leq s\leq a$, using the decreasing property of $\gamma$ again, there holds
	\begin{equation}
	\begin{split}
	s\gamma(s)-\Gamma(s)=&s\gamma(s)+\int_s^a\gamma(\eta)d\eta\\
	\leq& a\gamma(s_0)+\gamma(s_0)(a-s_0)\\
	\leq& 2a\gamma(s_0),
	\end{split}
	\end{equation}	
	which completes the proof.
\end{proof}
Now, we are ready to apply the comparison principle of parabolic equations to obtain the following result.
\begin{lemma}\label{vbd}
	Under the assumption of $(\mathrm{A0})$ and $(\mathrm{A1})$, there is $K>0$ depending on $a$ and the initial data such that for all $(x,t)\in\Omega\times[0,T_{\mathrm{max}})$,
	\begin{equation}\label{vbound}
		v(x,t)\leq \frac{1}{1-\gamma(a)}\bigg(w(x,t)+K\bigg).
	\end{equation}
\end{lemma}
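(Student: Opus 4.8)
The plan is to apply the parabolic comparison principle to the difference $v-w$, using the operator $\mathcal{L}[g]=g_t-\Delta g+g$ introduced in the Introduction. First I would record the basic facts: subtracting the Helmholtz equation for $w$ from the heat equation for $v$ gives $\mathcal{L}[v-w] = (v_t-\Delta v + v) - (-\Delta w + w) - w_t = u - u - w_t = -w_t$, so by the key identity \eqref{var0} we obtain the clean formula $\mathcal{L}[v-w] = -w_t = \gamma(v)u - (I-\Delta)^{-1}[\gamma(v)u]$. The goal is to bound the right-hand side above by $\mathcal{L}[\Gamma(v)+K]$ for a suitable large constant $K$; since $(I-\Delta)^{-1}[\gamma(v)u]\geq 0$ by the maximum principle, it suffices to show $\gamma(v)u \leq \mathcal{L}[\Gamma(v)+K]$, i.e.\ $\gamma(v)u \leq \big(\Gamma(v)\big)_t - \Delta\Gamma(v) + \Gamma(v) + K$.

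Next I would compute the right-hand side explicitly using $\Gamma'=\gamma$. We have $\big(\Gamma(v)\big)_t = \gamma(v)v_t$ and $\Delta\Gamma(v) = \gamma(v)\Delta v + \gamma'(v)|\nabla v|^2$. Since $\gamma'\leq 0$, the term $-\gamma'(v)|\nabla v|^2 \geq 0$ is favorable and can be discarded. Substituting $v_t = \Delta v - v + u$ from the second equation, $\gamma(v)v_t = \gamma(v)(\Delta v - v + u) = \gamma(v)\Delta v - \gamma(v)v + \gamma(v)u$. Hence $\big(\Gamma(v)\big)_t - \Delta\Gamma(v) = \gamma(v)u - \gamma(v)v - \gamma'(v)|\nabla v|^2 \geq \gamma(v)u - \gamma(v)v$. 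Therefore $\mathcal{L}[\Gamma(v)+K] \geq \gamma(v)u - \gamma(v)v + \Gamma(v) + K$, and by Lemma \ref{lemGam} (taking $s_0=v_*$, which is $0$ when $v_0\geq 0$) we have $\Gamma(v) \geq v\gamma(v) - C_a(v_*)$ for all $v\geq v_*$, so $-\gamma(v)v + \Gamma(v) \geq -C_a(v_*)$. Choosing $K \geq C_a(v_*)$ then yields $\mathcal{L}[\Gamma(v)+K]\geq \gamma(v)u \geq \mathcal{L}[v-w]$, which is the desired differential inequality.

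It remains to handle the initial and boundary conditions so that the comparison principle applies. The boundary condition is fine: $\partial_\nu(v-w)=0$ and $\partial_\nu(\Gamma(v)+K)=\gamma(v)\partial_\nu v = 0$. For the initial data I would choose $K$ large enough (depending also on $\|v_0\|_{L^\infty}$, $\|w_0\|_{L^\infty}$, $a$) so that $v_0(x) - w_0(x) \leq \Gamma(v_0(x)) + K$ for all $x\in\overline\Omega$; this is possible because $v_0\in W^{1,\infty}$ and $w_0\in C(\overline\Omega)$ are both bounded and $\Gamma$ is continuous, hence $\Gamma(v_0)$ is bounded below on $[0,\|v_0\|_\infty]$. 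With both the initial ordering and the differential inequality in hand, the comparison principle for the linear parabolic operator $\mathcal{L}$ with Neumann boundary conditions gives $v - w \leq \Gamma(v) + K$ on $\overline\Omega\times[0,T_{\max})$. Finally, invoking the upper bound $\Gamma(v)\leq\gamma(a)v$ from Lemma \ref{lemGam}, we get $v - w \leq \gamma(a)v + K$, i.e.\ $(1-\gamma(a))v \leq w + K$, and since $0<\gamma(a)<1$ this rearranges to \eqref{vbound}.

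The main obstacle I anticipate is a subtle regularity/well-posedness point in applying the comparison principle: $\Gamma(v)+K$ is only as smooth as $v$ allows (it is $C^{2,1}$ in the interior by the chain rule since $\gamma\in C^3$), and one must be careful that the argument is really a comparison of $v-w$ against the explicit supersolution $\Gamma(v)+K$ rather than a genuine PDE comparison — in fact the cleanest formulation is to set $z := v - w - \Gamma(v) - K$, verify $\mathcal{L}[z]\leq 0$ (using the computation above, where all the $u$-dependent and gradient terms cancel or have the good sign), $\partial_\nu z = 0$, and $z(\cdot,0)\leq 0$, and then conclude $z\leq 0$ by the standard maximum principle for $\mathcal{L}$. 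One should also double-check that the discarded term $-\gamma'(v)|\nabla v|^2$ and the nonnegativity of $(I-\Delta)^{-1}[\gamma(v)u]$ are correctly signed — both follow from $\gamma'\leq 0$ and $\gamma,u\geq 0$ respectively — and that the constant $K$ can be taken uniform on $[0,T_{\max})$, which it is since it depends only on $a$, $\Omega$, and the initial data through $v_*$, $\|v_0\|_{L^\infty}$ and $\|w_0\|_{L^\infty}$.
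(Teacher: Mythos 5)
Your proof is correct and follows essentially the same route as the paper: both arguments rewrite $\gamma(v)u$ via the chain-rule identity for $\Gamma(v)$, discard the nonnegative terms $(I-\Delta)^{-1}[\gamma(v)u]$ and $-\gamma'(v)|\nabla v|^2$, invoke Lemma \ref{lemGam} to bound $v\gamma(v)-\Gamma(v)$ and to linearize $\Gamma(v)\le\gamma(a)v$, and close with the parabolic comparison principle for $\mathcal{L}$. Your reorganization—computing $\mathcal{L}[v-w]=-w_t$ directly and then checking $\mathcal{L}[z]\le 0$ for $z=v-w-\Gamma(v)-K$—is a clean equivalent of the paper's chain of equalities \eqref{compare0}--\eqref{comp0}.
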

\begin{proof}
	 Recall that $w-\Delta w=u$. Substituting the key identity  \eqref{var0} into the second equation of \eqref{chemo1}, we observe that
\begin{equation}
	\begin{split}\label{compare0}
	v_t-\Delta v+v=&w-\Delta w\\
	=&w-\Delta w+w_t-w_t\\
	=&w_t-\Delta w+w+\gamma(v)u-(I-\Delta)^{-1}[\gamma(v)u].
	\end{split}
\end{equation}
Using the second equation of \eqref{chemo1} again, we observe that
\begin{equation}\label{Gamma}
	\begin{split}
	\gamma(v)u=&\gamma(v)(v_t-\Delta v+v)\\
	=&\bigg(\partial_t\Gamma(v)-\Delta \Gamma(v)+\Gamma(v)\bigg)+\gamma'(v)|\nabla v|^2+v\gamma(v)-\Gamma(v).
	\end{split}
\end{equation}
Then plugging \eqref{Gamma} into \eqref{compare0} yields that
\begin{equation}
	\begin{split}\label{comp0}
	&v_t-\Delta v+v+(I-\Delta)^{-1}[\gamma(v)u]-\gamma'(v)|\nabla v|^2\\
	&=\bigg(\partial_t(w+\Gamma(v))-\Delta (w+\Gamma(v))+\left(w+\Gamma(v)\right)\bigg)+\left(v\gamma(v)-\Gamma(v)\right).
	\end{split}
\end{equation}
According to Lemma \ref{lemGam}, there  is $C(v_*)>0$ depending on $a$ and $v_*$ such that for all $(x,t)\in\Omega\times[0,T_{\mathrm{max}})$
\begin{equation}
	v\gamma(v)-\Gamma(v)\leq C(v_*).
\end{equation}
In addition, since $(I-\Delta)^{-1}[\gamma(v)u]$ and $-\gamma'(v)|\nabla v|^2$ are both non-negative, it follows from \eqref{comp0} that for all $(x,t)\in\Omega\times[0,T_{\mathrm{max}})$,
\begin{equation}
	v_t-\Delta v+v
	\leq \bigg(\partial_t(w+\Gamma(v))-\Delta (w+\Gamma(v))+\left(w+\Gamma(v)\right)\bigg)+C(v_*).
\end{equation}
Now, in view of our assumption \eqref{ini} on the initial data, we may choose a positive constant $K\geq C(v_*)$ such that $v_0\leq w_0+\Gamma(v_0)+K$ for all $x\in\overline{\Omega}$. Then we deduce by comparison principle for heat equations that 
\begin{equation}
	v(x,t)\leq w(x,t)+\Gamma(v(x,t))+K,\;\;\forall(x,t)\in\Omega\times[0,T_{\mathrm{max}}).
\end{equation}
Finally, we may conclude the proof with the fact that 
\begin{equation*}
	\Gamma(v(x,t))\leq \gamma(a)v(x,t)
\end{equation*} due to Lemma \ref{lemGam} again.
\end{proof}

\begin{remark}\label{compwv_tau}
The similar result of Lemma \ref{vbd} still holds true 
if we replace the second equation of \eqref{chemo1} by 
$$
\tau v_t = \Delta v - v +u
$$
with a constant $\tau>0$. 
Indeed, one can give a suitable modification as follows.
For fixed $\tau>0$,
 we can choose some $a>0$ such that $0<\gamma(a)<\frac{1}{\tau}$ due to the assumption $\lim\limits_{s\rightarrow+\infty}\gamma(s)=0$. 
 With the function $\Gamma$ which is defined by the above $a>0$, 
we proceed the similar lines as
\begin{equation*}
	\begin{split}
	\tau v_t-\Delta v+v=&w-\Delta w\\
	=&\tau w_t-\Delta w+w+\tau \left(\gamma(v)u-(I-\Delta)^{-1}[\gamma(v)u]\right),
	\end{split}
\end{equation*}
and
\begin{equation*}
	\begin{split}
	\tau \gamma(v)u=&\tau \gamma(v)(\tau v_t-\Delta v+v)\\
	=&\bigg(\tau \partial_t (\tau\Gamma(v))-\Delta (\tau \Gamma(v))+(\tau\Gamma(v)) \bigg)+\tau \gamma'(v)|\nabla v|^2+\tau v\gamma(v)-\tau \Gamma(v),
	\end{split}
\end{equation*}
thus we derive
\begin{equation*}
	\begin{split}
	&\tau v_t-\Delta v+v+\tau (I-\Delta)^{-1}[\gamma(v)u]-\tau\gamma'(v)|\nabla v|^2\\
	&=\bigg(\tau \partial_t(w+\tau \Gamma(v))-\Delta (w+\tau \Gamma(v))+\left(w+\tau \Gamma(v)\right)\bigg)
	+\tau \left(v\gamma(v)-\Gamma(v)\right).
	\end{split}
\end{equation*}
By the same discussion, for any $(x,t)\in\Omega\times[0,T_{\mathrm{max}})$ we have
\begin{equation*}
	v(x,t)\leq w(x,t)+\tau \Gamma(v(x,t))+K
	 \leq w(x,t)+\tau\gamma(a)v(x,t)+K,
\end{equation*}
which implies
	\begin{equation*}
		v(x,t)\leq \frac{1}{1-\tau \gamma(a)}\bigg(w(x,t)+K\bigg).
	\end{equation*}
\end{remark}

Next, we establish uniform-in-time boundedness of $v$ with the growth condition $(\mathrm{A2})$ on $1/\gamma$.
\begin{lemma}\label{lm41}
	Assume $n=2,3$. Then under the assumptions  $\mathrm{(A0)}$,  $\mathrm{(A1)}$ and  $\mathrm{(A2)}$ with $0<k<\frac{2}{(n-2)_+},$ there exists $C>0$ depending only on $\gamma$, $\Omega$ and the initial data such that for all $(x,t)\in\Omega\times[0,T_{\mathrm{max}})$,
	\begin{equation*}
	v(x,t)\leq C.
	\end{equation*}
\end{lemma}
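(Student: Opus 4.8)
\emph{Reduction.} By Lemma~\ref{vbd} we already have $v(x,t)\le\frac{1}{1-\gamma(a)}\bigl(w(x,t)+K\bigr)$ pointwise, so it suffices to bound $w$ uniformly in $L^\infty(\Omega)$. Since $-\Delta w+w=u$ with homogeneous Neumann data on a smooth bounded $\Omega$, elliptic $W^{2,p}$-regularity gives $\|w(\cdot,t)\|_{W^{2,p}(\Omega)}\le C\|u(\cdot,t)\|_{L^p(\Omega)}$, and $W^{2,p}(\Omega)\hookrightarrow L^\infty(\Omega)$ as soon as $p>n/2$; hence the whole statement follows once we produce some $p>n/2$ with $\sup_{0<t<T_{\mathrm{max}}}\|u(\cdot,t)\|_{L^p(\Omega)}<\infty$. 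I therefore plan to upgrade the integrability of $u$ by a finite bootstrap.

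\emph{Bootstrap.} The starting integrability is free: mass conservation gives $\|u(\cdot,t)\|_{L^1(\Omega)}=\Lambda$, so Lemma~\ref{lm2} yields $\sup_t\|w(\cdot,t)\|_{L^q(\Omega)}<\infty$, and then $\sup_t\|v(\cdot,t)\|_{L^q(\Omega)}<\infty$ by Lemma~\ref{vbd}, for every fixed $q<\frac{n}{(n-2)_+}$. The role of $\mathrm{(A2)}$ is that it is equivalent (using $\gamma(0)<\infty$ and monotonicity) to the polynomial bound $1/\gamma(s)\le C(1+s^k)$ on $[0,\infty)$, so that in an $L^p$-energy estimate for $u$ the coefficient $1/\gamma(v)$ is controlled by a power of $v$; this is the device that converts integrability of $v$ into integrability of $u$. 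For the inductive step I would fix $p$ and test the first equation of \eqref{chemo1} — kept in the structure-preserving form $u_t=\Delta(\gamma(v)u)$ against $(\gamma(v)u)^{p-1}$, which produces the clean dissipation $(p-1)\io(\gamma(v)u)^{p-2}|\nabla(\gamma(v)u)|^2$, or in the variant form $u_t=\nabla\cdot(\gamma(v)\nabla u)+\nabla\cdot(u\gamma'(v)\nabla v)$ against $u^{p-1}$ — to obtain a differential inequality for $y(t)=\io u^p$. In it, the gradient cross terms are absorbed into a fixed fraction of the dissipation via Young's inequality; the zeroth-order terms created by replacing $v_t$ through the second equation ($v_t=\Delta v-v+u$) are estimated by Hölder's inequality against the currently available $L^r$-bound on $v$, with the $\gamma$-dependent factors tamed by $\mathrm{(A2)}$; and the dissipation is turned into a superlinear absorbing term by the Gagliardo--Nirenberg inequality (for $n=2$ this is exactly the content of Lemma~\ref{fs}, once $\io u\log u$ has been bounded). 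The uniform Gronwall inequality, Lemma~\ref{uniformGronwall}, then converts this into a time-independent bound for $y(t)$. Feeding the improved exponent for $u$ back through $-\Delta w+w=u$ and Lemma~\ref{vbd} enlarges the admissible $r$ for $v$, and after finitely many rounds one reaches $p>n/2$; the exponent arithmetic closes precisely because $n\le3$ and $k<\frac{2}{(n-2)_+}$ (a vacuous restriction when $n=2$).

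\emph{Main obstacle.} The delicate point is the inductive energy estimate itself. In the parabolic--elliptic case of \cite{FJ19} the term arising from $\partial_t\gamma(v)$ — equivalently the cross term $\io u^{p-1}\gamma'(v)\nabla u\cdot\nabla v$ — can be disposed of essentially by a sign consideration, but in the present doubly parabolic system it must be re-expanded through the $v$-equation, which re-introduces $v_t$ and $u$ and threatens terms of the same (or higher) order than the quantity being estimated. Arranging that each pass of the bootstrap \emph{strictly} improves the integrability of $u$, with constants independent of $t$, is where the dimensional restriction $n\le3$ and the polynomial growth bound $k<\frac{2}{(n-2)_+}$ on $1/\gamma$ are consumed. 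A subsidiary ingredient, needed for the $n=2$ absorption through Lemma~\ref{fs}, is a uniform bound on $\io u\log u$; I would obtain it from an entropy/Lyapunov-type functional of the system adapted to a general decreasing $\gamma$, together with mass conservation and the $L^q$-bounds on $v$ already in hand.
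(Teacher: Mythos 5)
Your overall reduction (bound $w$ uniformly, then $v\le\frac{w+K}{1-\gamma(a)}$ by Lemma~\ref{vbd}) coincides with the paper's, but the route you then propose for bounding $w$ is genuinely different and, as written, has a gap you yourself flag but do not close.

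You aim to produce some $p>n/2$ with $\sup_t\|u(\cdot,t)\|_{L^p(\Omega)}<\infty$ via a finite bootstrap, using $L^p$-energy estimates for $u$ and, in dimension two, Lemma~\ref{fs}. This requires as an input a time-independent bound on $\io u\log u$. You say you would get it from ``an entropy/Lyapunov-type functional of the system adapted to a general decreasing $\gamma$,'' but no such functional is available here: the Lyapunov structure $\mathcal{F}$ used in Section 6 is specific to $\gamma(v)=e^{-v}$ (it rests on $-\gamma'/\gamma\equiv 1$), and the paper's only entropy bound for general $\gamma$, Lemma~\ref{glm1}, is a $C(T)$-bound, not uniform in time, precisely because at that stage $v$ only has the time-growing estimate of Lemmas~\ref{keylem1} and \ref{vbd}. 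Moreover Lemma~\ref{fs} is two-dimensional, so your bootstrap also has no concrete mechanism for the $n=3$ absorption. The ``main obstacle'' you name — that in the doubly parabolic setting the cross term $\io u^{p-1}\gamma'(v)\nabla u\cdot\nabla v$ re-introduces $v_t$ and $u$ — is exactly the circularity that makes an $L^p$-bootstrap on $u$ problematic before $v$ is known bounded, and the paper indeed does not attempt it here; $L^p$-estimates for $u$ appear only in Section 5, \emph{after} $v$ is uniformly bounded.

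The paper's actual proof of Lemma~\ref{lm41} never estimates $\|u\|_{L^p}$ for any $p>1$. It tests the first equation against $w$ to get an $H^1(\Omega)$-energy inequality for $w$ with dissipation $\io\gamma(v)u^2$; it rewrites $\|w\|_{H^1}^2=\io wu\le\io\gamma(v)u^2+\io\gamma^{-1}(v)w^2$, uses $\mathrm{(A2)}$ in the form $\gamma^{-1}(s)\le bs^k+\gamma^{-1}(s_b)$ and Lemma~\ref{vbd} to replace $v$ by $w$, so that the ``bad'' term becomes $C\io w^{k+2}+C$; and it controls $\io w^{k+2}$ and the analogous term coming from a Sobolev-plus-H\"older bound on $\|w\|_{L^\infty}$ directly by $\|w\|_{L^q}$ (via Lemma~\ref{lm2} when $n=2$; via Gagliardo--Nirenberg plus Young when $n=3$, where the constraint $k<\tfrac{2}{(n-2)_+}$ enters). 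This closes a self-contained differential inequality for $\|w\|_{H^1}^2$ yielding $\sup_t\|w\|_{H^1}\le C$ and $\int_t^{t+\tau}\|w\|_{L^\infty}\le C$. The pointwise bound on $w$ then comes not from elliptic $W^{2,p}\hookrightarrow L^\infty$ applied to $u\in L^p$, but from the key identity $w_t+\gamma(v)u\le\gamma(v_*)w$, read as an ODI at each fixed $x$, combined with $\int_t^{t+\tau}w(x,s)\,ds\le C$ and the uniform Gronwall inequality (Lemma~\ref{uniformGronwall}). That pointwise-Gronwall step is the crucial device your plan is missing; it is what allows the paper to bound $v$ without ever touching $L^p$-norms of $u$.
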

\begin{proof}
	Multiplying the first equation of \eqref{chemo1} by $w=(I-\Delta)^{-1}[u]$ and integrating over $\Omega$, we obtain that
	\begin{equation}
	\frac{1}{2}\frac{d}{dt}(\|\nabla w\|_{L^2(\Omega)}^2+\|w\|_{L^2(\Omega)}^2)+\int_\Omega \gamma(v)u^2dx=\int_\Omega \gamma(v)u wdx.\nn
	\end{equation}	
	Thanks to the fact that $\gamma(v)\leq\gamma (v_*)$, we obtain that
	\begin{equation}\label{uni0}
	\frac{1}{2}\frac{d}{dt}(\|\nabla w\|_{L^2(\Omega)}^2+\|w\|_{L^2(\Omega)}^2)+\int_\Omega \gamma(v)u^2dx\leq \Lambda\gamma(v_*)\|w\|_{L^\infty(\Omega)},
	\end{equation}where $\Lambda=\int_\Omega u_0dx.$
	On the other hand,  by integration by parts and Young's inequality, we infer that
	\begin{align}\label{add0}
	\nn
	\|\nabla w\|_{L^2(\Omega)}^2+\|w\|_{L^2(\Omega)}^2=&\int_\Omega w udx\non\\
	\leq&\int_\Omega \gamma(v)u^2dx+\int_\Omega\gamma^{-1}(v)w^2dx.
	\end{align}
	In view of our assumption (A2), we may infer that there exist $k\in (0,\frac{2}{(n-2)_+} )$, $b>0$ and $s_b>v_*$ such that for all $s\geq s_b$
	\begin{equation*}
	\gamma^{-1}(s)\leq bs^k
	\end{equation*}and on the other hand, since $\gamma(\cdot)$ is decreasing,
	\begin{equation*}
	\gamma^{-1}(s)\leq \gamma^{-1}(s_b)
	\end{equation*}for all $0\leq s<s_b$.
	Therefore, for all $s\geq0$, there holds
	\begin{equation}\label{cond_gamma}
	\gamma^{-1}(s)\leq bs^{k}+\gamma^{-1}(s_b).
	\end{equation}
	Therefore, we deduce from above and Lemma \ref{vbd} that
	that	\begin{equation}\label{unic}
	\begin{split}
	\int_\Omega\gamma^{-1}(v)w^2dx\leq &\int_\Omega (bv^k+\gamma^{-1}(s_b))w^2dx\\
	\leq&\int_\Omega \left(b\left(\frac{1}{1-\gamma(a)}(w+K)\right)^k+\gamma^{-1}(s_b)\right)w^2dx\\
	\leq &C\int_\Omega w^{k+2}dx+C
	\end{split}
	\end{equation}
	with $C>0$ depending only on the initial data, $\gamma$ and $\Omega$.
	
	On the other hand, for any $\frac{n}{2}<p<2$, due to the Sobolev embedding theorem and H\"older's inequality, we have
	\begin{equation}
	\begin{split}
	\|w\|_{L^\infty(\Omega)}\leq&  C\|u\|_{L^p(\Omega)}\\
	\leq& C\left(\int_\Omega \gamma(v)u^2dx\right)^{1/2}\left(\int_\Omega \gamma^{-\frac{p}{2-p}}(v)dx\right)^{\frac{2-p}{2p}}.
	\end{split}
	\end{equation}	
In the same manner as before, we infer that
	\begin{equation}
	\begin{split}
	\int_\Omega \gamma^{-\frac{p}{2-p}}(v)dx\leq & \int_\Omega \left(b v^{k}+\gamma^{-1}(s_b)\right)^{\frac{p}{2-p}}dx\\
	\leq&\int_\Omega \left(b\left(\frac{1}{1-\gamma(a)}(w+K)\right)^k+\gamma^{-1}(s_b)\right)^{\frac{p}{2-p}}dx\\
	\leq &C\int_\Omega w^{\frac{pk}{2-p}}dx+C,
	\end{split}
	\end{equation} where $C>0$ depending only on the initial data, $\gamma$ and $\Omega$.
	Thus, by Young's inequality with any $\delta>0$, there holds
	\begin{equation}
	\begin{split}\label{winf}
	\|w\|_{L^\infty(\Omega)}\leq& \delta\int_\Omega \gamma(v)u^2dx+C_{\delta}\left(\int_\Omega \gamma^{-\frac{p}{2-p}}(v)dx\right)^{\frac{2-p}{p}}\\
	\leq& \delta\int_\Omega \gamma(v)u^2dx+C_{\delta}\left(\int_\Omega w^{\frac{pk}{2-p}}dx\right)^{\frac{2-p}{p}}+C_{\delta}^\prime.
	\end{split}
	\end{equation}
	As a result, we deduce from preceding inequalities \eqref{uni0}, \eqref{add0}, \eqref{unic} and \eqref{winf} that
	\begin{equation}
	\begin{split}\label{w1}
	&\frac{d}{dt}(\|\nabla w\|_{L^2(\Omega)}^2+\|w\|_{L^2(\Omega)}^2)+\int_\Omega \gamma(v)u^2dx+(\|\nabla w\|_{L^2(\Omega)}^2+\|w\|_{L^2(\Omega)}^2)\\
	&\leq 2\delta\Lambda \gamma(v_*)\int_\Omega\gamma(v)u^2dx+2C_\delta\Lambda \gamma(v_*)\left(\int_\Omega w^{\frac{pk}{2-p}}dx\right)^{\frac{2-p}{p}}+2C\int_\Omega w^{k+2}dx+C_\delta.
	\end{split}
	\end{equation}
	
	Next, we divide our argument into two cases. First, when $n=2$, recalling that $w=(I-\Delta)^{-1}[u]$ and thanks  to Lemma \ref{lm2},  we have
	\begin{equation}
	\left(\int_\Omega w^{\frac{pk}{2-p}}dx\right)^{\frac{2-p}{p}}+\int_\Omega w^{k+2}dx\leq C
	\end{equation} with some $C>0$ depending only on $\Omega$ and $\|u_0\|_{L^1(\Omega)}$.
	As a result, for $n=2$, by picking small $\delta>0$ in \eqref{w1}, we obtain that
	\begin{equation}\label{wn2}
	\frac{d}{dt}(\|\nabla w\|_{L^2(\Omega)}^2+\|w\|_{L^2(\Omega)}^2)+\frac12\int_\Omega \gamma(v)u^2dx+(\|\nabla w\|_{L^2(\Omega)}^2+\|w\|_{L^2(\Omega)}^2)\leq C,
	\end{equation} which by means of ODE analysis yields that
	\begin{equation}
	\|\nabla w\|_{L^2(\Omega)}^2+\|w\|_{L^2(\Omega)}^2\leq C
	\end{equation}with $C>0$ depending only on the initial data, $\gamma$ and $\Omega$. Moreover, it follows from \eqref{winf} and \eqref{wn2} that for any $t\in(0,T_{\mathrm{max}}-\tau)$ with  $\tau=\min\{1,\frac12T_{\mathrm{max}}\}$,
	\begin{equation}
	\int_t^{t+\tau}\|w\|_{L^\infty(\Omega)}ds\leq C\int_t^{t+\tau}\int_\Omega \gamma(v)u^2dxds+C\leq C.
	\end{equation}
	
	On the other hand, when $n=3$, for any $1\leq q<3$ and $3\leq r\leq 6$, we recall the Gagliardo-Nirenberg inequality
	\begin{equation*}
	\|w\|_{L^r(\Omega)}\leq C\|\nabla w\|_{L^2(\Omega)}^{\beta}\|w\|_{L^{q}(\Omega)}^{1-\beta}+C\|w\|_{L^1(\Omega)}.
	\end{equation*}
	with 
	\begin{equation*}
	\beta=(\frac1q-\frac1r)/(\frac1q-\frac16)\in(0,1].
	\end{equation*}
	Since $\|w\|_{L^q(\Omega)}$ with $1\leq q<3$ is bounded due to Lemma \ref{lm2}, we infer that for any $k\leq 4$
	\begin{equation*}
	\int_\Omega w^{k+2}dx\leq C\|\nabla w\|^{\beta_1(k+2)}+C
	\end{equation*}where
	\begin{equation*}
	\beta_1=(\frac{1}{q_1}-\frac{1}{k+2})/(\frac1{q_1}-\frac16)
	\end{equation*}
	and for any $\frac{pk}{2-p}\leq 6$ with some $\frac32<p<2$,
	\begin{equation*}
	\left(\int_\Omega w^{\frac{pk}{2-p}}dx\right)^{\frac{2-p}{p}}\leq C\|\nabla w\|^{k\beta_2}+C
	\end{equation*}where
	\begin{equation*}
	\beta_2=(\frac{1}{q_2}-\frac{2-p}{pk})/(\frac1{q_2}-\frac16).
	\end{equation*}
	We further require that $\beta_1(k+2)<2$ as well as $k\beta_2<2$ and then collecting the above inequalities on parameters, we get
	\begin{equation}
	\begin{cases}
	1\leq q_1,q_2<3,\\
	\frac{pk}{2-p}\leq 6,\\
	\frac32<p<2,\\
	0<k\leq 4,\\
	\beta_1(k+2)<2,\\
	k\beta_2<2.\\
	\end{cases}
	\end{equation}
	Then a direct calculation implies that for any $0<k<2$, we can find $p,q_1,q_2$ satisfying the above relations such that
	\begin{equation}
	\left(\int_\Omega w^{\frac{pk}{2-p}}dx\right)^{\frac{2-p}{p}}+\int_\Omega w^{k+2}dx\leq C\|\nabla w\|^\zeta+C
	\end{equation}
	with some $0<\zeta<2$. Now, we may use Young's inequality in \eqref{w1} to obtain that
	\begin{equation}\label{wn3}
	\frac{d}{dt}(\|\nabla w\|_{L^2(\Omega)}^2+\|w\|_{L^2(\Omega)}^2)+\frac12\int_\Omega \gamma(v)u^2dx+\frac12(\|\nabla w\|_{L^2(\Omega)}^2+\|w\|_{L^2(\Omega)}^2)\leq C
	\end{equation} where $C>0$ depends only on $\gamma$, $\Omega$ and the initial data. Then in the same manner as before, we obtain that
	\begin{equation}\label{ubw1}
	\|\nabla w\|_{L^2(\Omega)}^2+\|w\|_{L^2(\Omega)}^2\leq C
	\end{equation}
	and for any $t\in(0,T_{\mathrm{max}}-\tau)$ with  $\tau=\min\{1,\frac12T_{\mathrm{max}}\}$,
	\begin{equation}\label{ubw2}
	\int_t^{t+\tau}\|w\|_{L^\infty(\Omega)}\leq C \int_t^{t+\tau}\int_\Omega\gamma(v)u^2dxds+C\leq C.
	\end{equation}

	In summary, we establish uniform-in-time bounds \eqref{ubw1} and \eqref{ubw2} for $n=2,3$ with any $0<k<\frac{n}{(n-2)_+}$, which in particular indicates that for any fixed $x\in\Omega$ and any $t\in(0,T_{\mathrm{max}}-\tau)$ with  $\tau=\min\{1,\frac12T_{\mathrm{max}}\}$,
	\begin{equation}
	\int_t^{t+\tau}w(x,s)ds\leq	\int_t^{t+\tau}\|w\|_{L^\infty(\Omega)}\leq C.
	\end{equation}
	Then, we recall that
	\begin{equation*}
	w_t+\gamma(v)u=(I-\Delta)^{-1}[\gamma(v)u]\leq \gamma(v_*)w.
	\end{equation*}
	With the aid of the uniform Gronwall inequality Lemma \ref{uniformGronwall}, we infer for any $x\in\Omega$ and $t\in(\tau,T_{\mathrm{max}})$
	\begin{equation}
	w(x,t)\leq C
	\end{equation}with some $C>0$ independent of $x$, $t$ and $T_{\mathrm{max}}$ which together with Lemma \ref{keylem1} for $t\leq \tau$ gives rise to the uniform-in-time boundedness of $w$ such that for all $(x,t)\in\Omega\times[0,T_{\mathrm{max}}),$
	\begin{equation}
	w(x,t)\leq C.
	\end{equation}
	This concludes the proof due to Lemma \ref{vbd} since
	\begin{equation*}
	v(x,t)\leq \frac{1}{1-\gamma(a)}\bigg(w(x,t)+K\bigg).
	\end{equation*}
\end{proof}

\begin{remark}
	The results of Lemma \ref{lemGam}, Lemma \ref{vbd} and Lemma \ref{lm41} still hold ture if one replaces the assumption $(\mathrm{A1})$ by the following
	\begin{equation}
	\mathrm{(A1')}:\lim\limits_{s\rightarrow+\infty}\gamma(s)=\gamma_\infty<1.
	\end{equation}
\end{remark}

\begin{remark}\label{unifbddv_tau}
In light of Lemma \ref{compwv_tau}, the result of Lemma \ref{lm41} still holds if we replace the second equation of \eqref{chemo1} by 
$$
\tau v_t = \Delta v - v +u
$$
with a constant $\tau>0$. 
\end{remark}

\section{Existence and Boundedness of Classical Solutions}
In this section, we prove Theorem \ref{TH1} and Theorem \ref{TH3d} via the classical energy method.

\subsection{A Priori Estimates}
 To begin with, we derive some energy estimates.
\begin{lemma}\label{est0}Assume $n\geq1$. There exists $C>0$ depending on the $\|u_0\|_{L^1(\Omega)}$ and $\Omega$ such that for any $t\in[0,T_{\mathrm{max}})$,
	\begin{equation}\label{L2u}
	\|u(t)-\overline{u_0}\|_{H^{-1}}^2+\|w(t)\|_{H^1(\Omega)}^2+\int_0^t\int_\Omega \gamma(v)u^2dxds\leq 2\|u_0-\overline{u_0}\|^2_{H^{-1}(\Omega)}+2\overline{u_0}^2|\Omega| +Ct,
	\end{equation}
	where $\overline{\varphi}\triangleq\frac{1}{|\Omega|}\int_\Omega \varphi dx$ for any $\varphi\in L^1(\Omega)$.	
\end{lemma}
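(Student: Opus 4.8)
The plan is to test the first equation of \eqref{chemo1} against the auxiliary function $w=(I-\Delta)^{-1}[u]$, which is the natural quantity here because $\int_\Omega u\,w\,dx=\|w\|_{H^1(\Omega)}^2$ and $w$ inherits good regularity from the Helmholtz equation. First I would note that the conserved mass $\|u_0\|_{L^1(\Omega)}$ controls $\overline{u}=\overline{u_0}$, and that $w$ differs from $(I-\Delta)^{-1}[u-\overline{u_0}]$ only by the constant $\overline{u_0}$; in particular $\|u-\overline{u_0}\|_{H^{-1}(\Omega)}^2$ is comparable to $\|\nabla w\|_{L^2(\Omega)}^2+\|w-\overline{u_0}\|_{L^2(\Omega)}^2$ up to the explicit lower-order term $\overline{u_0}^2|\Omega|$, which is where the additive constant $2\overline{u_0}^2|\Omega|$ in \eqref{L2u} comes from. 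So it suffices to bound $\|w(t)\|_{H^1(\Omega)}^2$ together with the dissipation integral.

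The computation itself is the one already carried out at the start of the proof of Lemma \ref{lm41}: multiplying $u_t=\Delta(\gamma(v)u)$ by $w$, integrating over $\Omega$, and using $-\Delta w+w=u$ and self-adjointness of $(I-\Delta)^{-1}$ gives
\begin{equation*}
\frac{1}{2}\frac{d}{dt}\Big(\|\nabla w\|_{L^2(\Omega)}^2+\|w\|_{L^2(\Omega)}^2\Big)+\int_\Omega \gamma(v)u^2\,dx=\int_\Omega \gamma(v)u\,w\,dx.
\end{equation*}
Next I would control the right-hand side: since $\gamma(v)\le\gamma(v_*)$ by monotonicity of $\gamma$ and nonnegativity of $v$, and $\int_\Omega u\,w\,dx=\|w\|_{H^1(\Omega)}^2$, we get $\int_\Omega\gamma(v)u\,w\,dx\le\gamma(v_*)\|w\|_{H^1(\Omega)}^2$ only if $w$ is bounded, so instead I would use $\gamma(v)u\,w\le\frac12\gamma(v)u^2+\frac12\gamma(v)w^2\le\frac12\gamma(v)u^2+\frac12\gamma(v_*)w^2$, absorbing the first piece into the dissipation and leaving $\frac12\gamma(v_*)\|w\|_{L^2(\Omega)}^2$ on the right. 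Then
\begin{equation*}
\frac{d}{dt}\Big(\|\nabla w\|_{L^2(\Omega)}^2+\|w\|_{L^2(\Omega)}^2\Big)+\int_\Omega \gamma(v)u^2\,dx\le \gamma(v_*)\|w\|_{L^2(\Omega)}^2.
\end{equation*}
By Lemma \ref{lm2} (with $q=1$, valid in every dimension $n\ge1$) we have $\|w\|_{L^1(\Omega)}\le C\|u\|_{L^1(\Omega)}=C\Lambda$, and since $w\ge0$ solves the Helmholtz equation, testing it against $1$ gives $\|w\|_{L^1(\Omega)}=\|u\|_{L^1(\Omega)}$ directly; to bound $\|w\|_{L^2(\Omega)}^2$ I would instead test the Helmholtz equation against $w$ to get $\|w\|_{H^1(\Omega)}^2=\int_\Omega uw\,dx\le\|u\|_{L^1(\Omega)}\|w\|_{L^\infty(\Omega)}$ — but that reintroduces $L^\infty$. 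The clean route is to simply observe that the right-hand side $\gamma(v_*)\|w\|_{L^2(\Omega)}^2$ is dominated by $\gamma(v_*)\|w\|_{H^1(\Omega)}^2$, set $y(t)=\|\nabla w\|_{L^2(\Omega)}^2+\|w\|_{L^2(\Omega)}^2$, and note that the differential inequality $y'\le \gamma(v_*)\,y$ plus the dissipation term is not quite what we want for a linear-in-$t$ bound; so instead I would keep the dissipation and write $y'+\int_\Omega\gamma(v)u^2\le \gamma(v_*)y$, then integrate after bounding $\gamma(v_*)y$ crudely — actually the simplest honest path, matching the stated conclusion with its $+Ct$, is to bound $\int_\Omega\gamma(v)uw\,dx\le\gamma(v_*)\Lambda\|w\|_{L^\infty(\Omega)}$ and then $\|w\|_{L^\infty(\Omega)}$ via $\frac12\int_\Omega\gamma(v)u^2+C$ exactly as in \eqref{winf}; but the honest minimal statement here only needs $\int_\Omega\gamma(v)uw\le\frac12\int_\Omega\gamma(v)u^2+\frac12\gamma(v_*)\int_\Omega w^2$ and $\int_\Omega w^2\le C$ from Lemma \ref{lm2} in dimensions $n\le3$; integrating in time from $0$ to $t$ then yields $y(t)+\int_0^t\int_\Omega\gamma(v)u^2\le y(0)+Ct$, and translating $y$ back into the $H^{-1}$ norm of $u-\overline{u_0}$ finishes it.

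The main obstacle is the honest treatment of the right-hand side $\int_\Omega\gamma(v)uw\,dx$ without prematurely invoking an $L^\infty$ bound on $w$ (which is exactly what Lemma \ref{vbd} and Lemma \ref{lm41} are designed to produce later, so we cannot use it here): the resolution is to split it as $\frac12\gamma(v)u^2+\frac12\gamma(v)w^2$, absorb the quadratic-in-$u$ part into the dissipation, and bound $\int_\Omega\gamma(v)w^2\le\gamma(v_*)\|w\|_{L^2(\Omega)}^2\le C$ purely from the mass conservation via Lemma \ref{lm2} (since $2<\tfrac{n}{(n-2)_+}$ fails for $n=3$ — one needs $\|w\|_{L^2}$ only, and $L^2\hookrightarrow L^1$-type control from Lemma \ref{lm2} with $q<3$ gives $\|w\|_{L^2(\Omega)}\le C\Lambda$ for $n\le 3$). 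Everything else is routine integration in time and the elementary norm comparison between $\|u-\overline{u_0}\|_{H^{-1}(\Omega)}^2$ and $\|w\|_{H^1(\Omega)}^2$, with the constant $2\overline{u_0}^2|\Omega|$ accounting for the mean of $u$.
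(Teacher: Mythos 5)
Your approach is genuinely different from the paper's, and the difference matters: it introduces a dimensional restriction that the lemma does not allow. The paper multiplies the first equation by $A^{-1}(u-\overline{u_0})$, where $A$ is $-\Delta$ restricted to mean-zero functions. The right-hand side of the resulting energy identity is then $\overline{u_0}\int_\Omega \gamma(v)u\,dx$, which is bounded trivially by $\gamma(v_*)\overline{u_0}^2|\Omega|$ using mass conservation and $\gamma(v)\leq\gamma(v_*)$ alone. No Sobolev embedding and no elliptic $L^p$ regularity for $w$ enter anywhere, so the argument works uniformly for all $n\geq 1$. You instead test against $w=(I-\Delta)^{-1}[u]$, which produces $\int_\Omega\gamma(v)uw\,dx$ on the right-hand side; after Young's inequality you are left needing $\|w\|_{L^2(\Omega)}\leq C\|u\|_{L^1(\Omega)}$. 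By Lemma \ref{lm2} this is available precisely when $2<\frac{n}{(n-2)_+}$, i.e.\ when $n\leq 3$; for $n\geq 4$ the operator $(I-\Delta)^{-1}$ does not map $L^1$ into $L^2$, so the step fails. Since the lemma is asserted for every $n\geq 1$, this is a genuine gap, not just a cosmetic one. (You acknowledge the $n\leq 3$ restriction in passing, but do not notice that it leaves the stated lemma unproved; and your aside that ``$2<\tfrac{n}{(n-2)_+}$ fails for $n=3$'' is itself wrong --- for $n=3$ the quantity equals $3>2$ --- though your conclusion that $n\leq 3$ works is correct.)

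A secondary, more cosmetic issue: with the paper's multiplier, the quantity $\|u-\overline{u_0}\|_{H^{-1}}^2$ is produced directly by the time derivative, and the bound $\|w\|_{H^1}^2\leq\|u-\overline{u_0}\|_{H^{-1}}^2+2\overline{u_0}^2|\Omega|$ is then supplied afterwards by pairing $\int uw\,dx$ against $u-\overline{u_0}$ plus Young. Your multiplier produces $\|w\|_{H^1}^2$ directly, so you need the reverse comparison $\|u-\overline{u_0}\|_{H^{-1}}^2\lesssim\|w\|_{H^1}^2$, which holds but with a constant depending on the first nonzero Neumann eigenvalue of $-\Delta$ on $\Omega$; you would not recover the explicit coefficients $2\|u_0-\overline{u_0}\|_{H^{-1}}^2+2\overline{u_0}^2|\Omega|$ that the paper states. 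This alone would not sink the proof, but together with the dimensional restriction it means the argument as written establishes a weaker variant of Lemma \ref{est0} valid only for $n\leq 3$.
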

\begin{proof}Let $A$ denote the self-adjoint realization of $-\Delta$ under homogeneous Neumann boundary condition in the Hilbert space 
	$L^2_\perp(\Omega):=\{ \psi \in L^2(\Omega) \,|\, \io \psi=0 \}$
	with domain 
	$D(A) := \{ \psi \in H^2(\Omega) \cap L^2_\perp(\Omega) \,|\, \frac{\partial \psi}{\partial \nu}=0 \ \mbox{on }\partial \Omega \}$.
	Moreover we denote the bounded self-adjoint fractional powers $A^{-\alpha}$ with any $\alpha>0$. 
	Multiplying the first equation of \eqref{chemo1} by $A^{-1}(u-\overline{u_0})$ and integrating over $\Omega$, we obtain that
	\begin{equation}
	\frac{1}{2}\frac{d}{dt}\|A^{-\frac12}(u-\overline{u_0})\|_{L^2(\Omega)}^2+\int_\Omega \gamma(v)u^2dx=\overline{u_0}\int_\Omega \gamma(v)udx.\nn
	\end{equation}	
	Since $\gamma(v)\leq\gamma (v_*)$, we deduce that
	\begin{equation*}
	\frac{1}{2}\frac{d}{dt}\|A^{-\frac12}(u-\overline{u_0})\|_{L^2(\Omega)}^2+\int_\Omega \gamma(v)u^2dx\leq \gamma(v_*)\overline{u_0}^2|\Omega|,
	\end{equation*}which  implies  by a direct integration that for any $t\in(0,T_{\mathrm{max}})$
	\begin{equation*}
	\|A^{-\frac12}(u(t)-\overline{u_0})\|_{L^2(\Omega)}^2+2\int_0^t\int_\Omega \gamma(v)u^2dx\leq \|(-\Delta)^{-\frac12}(u_0-\overline{u_0})\|_{L^2(\Omega)}^2+2 \gamma(v_*)\overline{u_0}^2|\Omega|t.
	\end{equation*}
	On the other hand, noticing that $\overline{w}=\overline{u_0}$, we observe from the Helmholtz equation that
	\begin{align*}
	\|w\|_{H^1(\Omega)}^2=&\int_\Omega (|\nabla w|^2+w^2)dx\non\\
	=&\int_\Omega uwdx\non\\
	=&\int_\Omega (u-\overline{u_0})wdx+\overline{u_0}^2|\Omega|\non\\
	\leq& \|u-\overline{u_0}\|_{H^{-1}(\Omega)}\|w\|_{H^1(\Omega)}+\overline{u_0}^2|\Omega|.
	\end{align*} 
	Thus, by Young's inequality, we obtain that
	\begin{equation*}
	\|w\|_{H^1(\Omega)}^2\leq \|u-\overline{u_0}\|^2_{H^{-1}(\Omega)}+2\overline{u_0}^2|\Omega|,
	\end{equation*}	which completes the proof.
\end{proof}
\begin{remark}
	With the energy estimates in Lemma \ref{est0} and in the same manner as done in \cite[Lemma 3.5]{FJ19}, we may show that the upper bounds of $w$ and hence of $v$ grow at most linearly in time if $n\leq3.$
\end{remark}

\begin{lemma}\label{glm1}
	Assume $n\leq 3$ and $(u,v)$ is a classical solution of system \eqref{chemo1} on $\Omega\times (0,T)$. Then there exists $C(T)>0$ depending on $\Omega$, $T$ and the initial data such that 
	\begin{equation*}
	\sup\limits_{0<t<T}\int_\Omega u(t)\log u(t) dx+\int_0^T\int_\Omega(1+\gamma(v))\frac{|\nabla u|^2}{u}dxds\leq C(T).
	\end{equation*}
\end{lemma}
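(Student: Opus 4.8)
The plan is to test the first equation of \eqref{chemo1} against $\log u$ (more precisely against $\log(u+\e)$ and then let $\e\to0$, since $u$ may vanish) and to control the resulting bad terms by the already-established linear-in-time upper bound for $v$ together with the dissipation coming from the standard $L\log L$ identity. Concretely, writing the first equation as $u_t=\nabla\cdot(\gamma(v)\nabla u)+\nabla\cdot(u\gamma'(v)\nabla v)$, multiplying by $\log u$ and integrating by parts gives
\begin{align*}
\frac{d}{dt}\io u\log u + \io \gamma(v)\frac{|\nabla u|^2}{u} = -\io \gamma'(v)\nabla u\cdot\nabla v = \io |\gamma'(v)|\,\nabla u\cdot\nabla v,
\end{align*}
using $\gamma'\le0$. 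So the only thing to absorb is the cross term $\io|\gamma'(v)|\nabla u\cdot\nabla v$. The first thing I would do is record, via Lemma \ref{keylem1} and Lemma \ref{vbd} (or the Remark after Lemma \ref{est0}), that $v\le C(T)$ on $\Omega\times(0,T)$; hence $\gamma(v)$ is bounded below by a positive constant $\gamma(C(T))>0$ on $(0,T)$, which is exactly what lets the dissipation term $\io\gamma(v)|\nabla u|^2/u$ dominate after Young's inequality — and it also gives the $1+\gamma(v)$ form in the statement for free since $\gamma(v)\le\gamma(v_*)$ is bounded above.

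The main obstacle is the gradient term $\nabla v$ in the cross term: one needs an $L^2_tL^2_x$ (or better) bound on $\nabla v$, and more delicately on $\Delta v$, to close the estimate. Here I would exploit the second equation $v_t-\Delta v+v=u$ together with the energy bound from Lemma \ref{est0}, which supplies $\int_0^T\io\gamma(v)u^2<\infty$ and hence, using the lower bound $\gamma(v)\ge\gamma(C(T))$, a bound on $\int_0^T\|u\|_{L^2(\Omega)}^2$. Testing the $v$-equation against $-\Delta v$ (standard parabolic regularity / the maximal Sobolev regularity for the heat semigroup) then yields $\int_0^T(\|\Delta v\|_{L^2}^2+\|\nabla v\|_{L^2}^2)\,dt\le C(T)$, and by Gagliardo–Nirenberg in dimension $n\le3$ one also gets $\nabla v\in L^4_tL^4_x$ or at least enough integrability to handle $\io|\gamma'(v)|\nabla u\cdot\nabla v$. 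Since $|\gamma'(v)|\le|\gamma'(v_*)|$ is bounded on $\Omega\times(0,T)$ by monotonicity of $\gamma'$ — or, if $\gamma'$ is not monotone, bounded by $\max_{[v_*,C(T)]}|\gamma'|$ using the two-sided pointwise bounds on $v$ — one estimates
\begin{align*}
\io|\gamma'(v)|\nabla u\cdot\nabla v \le \frac12\io\gamma(v)\frac{|\nabla u|^2}{u}+C\io \frac{|\gamma'(v)|^2}{\gamma(v)}\,u\,|\nabla v|^2 \le \frac12\io\gamma(v)\frac{|\nabla u|^2}{u}+C(T)\io u|\nabla v|^2,
\end{align*}
and the remaining term $\io u|\nabla v|^2$ is controlled in $L^1_t$ by interpolating $u$ and $\nabla v$ against the bounds just obtained (e.g. $\io u|\nabla v|^2\le\|u\|_{L^3}\|\nabla v\|_{L^3}^2$ with Gagliardo–Nirenberg, bounded in $L^1(0,T)$ when $n\le3$).

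Having absorbed the cross term, the differential inequality becomes
\begin{align*}
\frac{d}{dt}\io u\log u + \frac12\io\gamma(v)\frac{|\nabla u|^2}{u} \le h(t),
\end{align*}
with $h\in L^1(0,T)$ depending only on $\Omega$, $T$ and the initial data; integrating in time and using that $u_0\log u_0\in L^1(\Omega)$ (a consequence of $u_0\in C^0(\overline\Omega)$) gives the $\sup_t\io u\log u$ bound and the $\int_0^T\io\gamma(v)|\nabla u|^2/u$ bound. Finally, to recover the full $1+\gamma(v)$ weight in the statement I would additionally test the first equation against $\log u$ after writing it as $u_t=\Delta(\gamma(v)u)$ is not needed — instead simply note $\io\frac{|\nabla u|^2}{u}=4\io|\nabla\sqrt u|^2$ can be bounded separately by the same method (test against $\log u$ and use $\gamma(v)\ge\gamma(C(T))>0$ once more to pass from $\io\gamma(v)|\nabla u|^2/u$ to $\io|\nabla u|^2/u$), so that the sum $\io(1+\gamma(v))|\nabla u|^2/u$ is controlled. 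I would use the $L\log L$ functional rather than $L^p$ estimates precisely because in $n=2$ it is the borderline quantity needed downstream (cf.\ Lemma \ref{fs}), and in $n=3$ it still gives a usable starting point; the delicate point throughout is that all constants are allowed to depend on $T$, so only the two-sided bound on $v$ on finite time intervals — not a uniform-in-time one — is required here.
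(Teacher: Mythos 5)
Your strategy is essentially the one the paper uses --- test against $\log u$, absorb the cross term with a weighted Young inequality, invoke the finite-time two-sided bound on $v$ from Lemma \ref{keylem1}, Lemma \ref{vbd}, Lemma \ref{lowbound} so that $|\gamma'|^2/\gamma$ is pointwise bounded and $\gamma(v)$ is bounded below, and close with Lemma \ref{est0} plus maximal regularity of the heat equation. The remark at the end about recovering the $1+\gamma(v)$ weight via the lower bound on $\gamma$ is also correct.

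The one concrete step that does not go through as written is the control of the residual term $\int_0^T\io u|\nabla v|^2$. You propose
$$\io u|\nabla v|^2 \le \|u\|_{L^3(\Omega)}\,\|\nabla v\|_{L^3(\Omega)}^2,$$
but the a priori estimates available at this stage give $u\in L^2(0,T;L^2(\Omega))$ (Lemma \ref{est0} plus the lower bound on $\gamma(v)$) and, at best, $u\in L^\infty(0,T;L^1(\Omega))$; these do not interpolate to any $L^p(0,T;L^3(\Omega))$ with finite $p$, and no Gagliardo--Nirenberg inequality produces an $L^3_x$ bound on $u$ from what is known. The paper instead applies a \emph{second} weighted Young inequality
$$\io \frac{|\gamma'(v)|^2}{\gamma(v)}\,u\,|\nabla v|^2 \le \io \gamma(v)\,u^2 + \io \frac{|\gamma'(v)|^4}{\gamma(v)^3}\,|\nabla v|^4,$$
whose first term is precisely the dissipation of Lemma \ref{est0} and whose second term is $\le C(T)\io|\nabla v|^4$ by the two-sided bound on $v$; the latter is then controlled by $C(T)(\|v\|_{H^2}^2+1)$ via the three-dimensional Gagliardo--Nirenberg inequality $\|\nabla v\|_{L^4}\le C\|v\|_{H^2}^{1/2}\|v\|_{L^\infty}^{1/2}+C\|v\|_{L^\infty}$, and $\int_0^T\|v\|_{H^2}^2\,dt\le C(T)$ follows from maximal regularity together with $u\in L^2(0,T;L^2)$. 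Equivalently, in your H\"older language the correct split is $\io u|\nabla v|^2\le\|u\|_{L^2}\|\nabla v\|_{L^4}^2$, which is integrable in time by pairing $\|u\|_{L^2}\in L^2(0,T)$ with $\|\nabla v\|_{L^4}^2\in L^2(0,T)$. With this replacement your argument matches the paper's proof.
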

\begin{proof}
	Multiplying the first equation of \eqref{chemo1} by $\log u$, integrating by parts and applying Young's inequality, we obtain that
	\begin{align}
	\frac{d}{dt}\int_\Omega u\log udx+\int_\Omega \gamma(v)\frac{|\nabla u|^2}{u}dx=&-\int_\Omega \gamma'(v)\nabla v\cdot \nabla udx\non\\
	\leq&\frac12\int_\Omega \gamma(v)\frac{|\nabla u|^2}{u}dx+\int_\Omega 
	\frac{|\gamma'(v)|^2}{\gamma(v)}u|\nabla v|^2dx\non\\
	\leq &\frac12\int_\Omega \gamma(v)\frac{|\nabla u|^2}{u}dx+\int_\Omega \gamma(v)u^2dx+\int_\Omega 
	\frac{|\gamma'(v)|^4}{\gamma(v)^3}|\nabla v|^4dx.\non
	\end{align}
	In view of Lemma \ref{lowbound}, Lemma \ref{keylem1}, Lemma \ref{vbd} and our assumption \eqref{gamma0a} on $\gamma,$ there is $C(T)>0$ depending on the initial data and $\gamma$
	such that for all $(x,t)\in\Omega\times(0,T)$,
	\begin{equation*}
	\frac{|\gamma'(v)|^4}{\gamma(v)^3}(x,t)\leq C(T).
	\end{equation*}
	Therefore, with the aid of the three-dimensional Sobolev embedding
	\begin{equation}\nn
	\|\nabla v\|_{L^4(\Omega)}\leq C\|v\|_{H^2(\Omega)}^{1/2}\|v\|^{1/2}_{L^\infty(\Omega)}+C\|v\|_{L^\infty(\Omega)},
	\end{equation}we infer that \begin{align}
	\int_\Omega\frac{|\gamma'(v)|^4}{\gamma(v)^3}|\nabla v|^4dx\leq& C(T)\int_\Omega|\nabla v|^4dx\non\\
	\leq&C(T)\|v\|^2_{H^2(\Omega)}+C(T).\nn
	\end{align}		
	On the other hand, since $\gamma(v)$ is now bounded from below, we observe from the maximal regularity estimate of heat equations (see \cite{maximal}) and Lemma \ref{est0} that
	\begin{equation}
	\int_0^T\|v\|^2_{H^2(\Omega)}dt
	\leq C \|v_0\|_{H^1(\Omega)}^2+
	C\int_0^T\|u\|_{L^2(\Omega)}^2dt\leq C(T).\nn
	\end{equation}
	Finally, we deduce that
	\begin{equation}
	\int_\Omega u\log udx+\int_0^T\int_\Omega (1+\gamma(v))\frac{|\nabla u|^2}{u}dxdt	\leq C(T),\nn
	\end{equation}
	which completes the proof.
\end{proof}

\subsection{Classical Solution in Dimension Two}
In this part, we deal with the case $n=2$ by a similar argument as done for the classical Keller--Segel models (c.f. \cite{fs2018}). First, we have
\begin{lemma}\label{lemma_LP}
	Assume $n=2$ and let  $(u,v)$ be a classical solution of system \eqref{chemo1} on $\Omega \times (0,T)$. 
	Then there exist  $p \in (1,2)$ and some $C(T) >0$ such that
	\begin{eqnarray*}
		\|u(t)\|_{L^p(\Omega)} \leq C(T)\qquad
		\mbox{for all }t\in(0,T).
	\end{eqnarray*}
\end{lemma}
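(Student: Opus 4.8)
The plan is to run the familiar $L^p$-energy estimate for $u$ with some fixed exponent $p\in(1,2)$ (for concreteness $p=\tfrac32$), exploiting the fact that on the finite interval $(0,T)$ the quantity $v$ is already controlled by the comparison estimates of Section~4. First I would collect the facts this rests on. By Lemma~\ref{keylem1} and Lemma~\ref{vbd} one has
\begin{equation*}
v(x,t)\le \frac{1}{1-\gamma(a)}\Big(\|w_0\|_{L^\infty(\Omega)}e^{\gamma(v_*)T}+K\Big)=:C(T)\qquad\text{on }\Omega\times(0,T),
\end{equation*}
so that $\gamma(v)$ is bounded away from $0$ on $\Omega\times(0,T)$ while $|\gamma'(v)|$ and $|\gamma''(v)|$ are bounded there by constants depending on $T$. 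In particular Lemma~\ref{est0} together with the lower bound for $\gamma(v)$ gives $u\in L^2(0,T;L^2(\Omega))$, and feeding this into the second equation of \eqref{chemo1} and invoking maximal Sobolev regularity for the Neumann heat semigroup (exactly as in the proof of Lemma~\ref{glm1}, using $v_0\in W^{1,\infty}(\Omega)\hookrightarrow H^1(\Omega)$) yields $v\in L^2(0,T;H^2(\Omega))\cap H^1(0,T;L^2(\Omega))\hookrightarrow C([0,T];H^1(\Omega))$. Combining the resulting $L^\infty(0,T;L^2)$ bound for $\nabla v$ with the $L^2(0,T;H^2)$ bound and the two-dimensional Ladyzhenskaya inequality, I get $\nabla v\in L^4(0,T;L^4(\Omega))$, hence $\|\nabla v(\cdot,t)\|_{L^4(\Omega)}^4+\|\nabla v(\cdot,t)\|_{L^4(\Omega)}^2\in L^1(0,T)$.

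Next I would test the first equation of \eqref{chemo1} against $u^{p-1}$ (regularising $u$ as $u+\delta$ and letting $\delta\downarrow0$, or using $u>0$ for $t>0$), which gives
\begin{equation*}
\frac{1}{p}\frac{d}{dt}\io u^p+(p-1)\io \gamma(v)u^{p-2}|\nabla u|^2=-(p-1)\io \gamma'(v)\,u^{p-1}\nabla u\cdot\nabla v .
\end{equation*}
Young's inequality on the right, together with $|\gamma'(v)|^2/\gamma(v)\le C(T)$, absorbs half the dissipation and leaves the single term $C(T)\io u^p|\nabla v|^2$. To handle it I would use H\"older's inequality, $\io u^p|\nabla v|^2\le \|u\|_{L^{2p}(\Omega)}^p\|\nabla v\|_{L^4(\Omega)}^2$, then the two-dimensional Ladyzhenskaya inequality applied to $u^{p/2}$ (this is where $p\le 2$ is used, so that $u^{p/2}\in L^2(\Omega)$),
\begin{equation*}
\|u\|_{L^{2p}(\Omega)}^p=\|u^{p/2}\|_{L^4(\Omega)}^2\le C\|u^{p/2}\|_{L^2(\Omega)}\,\|\nabla u^{p/2}\|_{L^2(\Omega)}+C\|u^{p/2}\|_{L^2(\Omega)}^2 ,
\end{equation*}
and finally Young's inequality once more. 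Since $\|\nabla u^{p/2}\|_{L^2(\Omega)}^2=\tfrac{p^2}{4}\io u^{p-2}|\nabla u|^2$ and $\gamma(v)$ is bounded below on $\Omega\times(0,T)$, any small multiple of $\|\nabla u^{p/2}\|_{L^2(\Omega)}^2$ is reabsorbed into the dissipation. Writing $y(t)=\io u^p(\cdot,t)$, the outcome is a differential inequality
\begin{equation*}
y'(t)\le C(T)\Big(\|\nabla v(\cdot,t)\|_{L^4(\Omega)}^4+\|\nabla v(\cdot,t)\|_{L^4(\Omega)}^2\Big)\,y(t)
\end{equation*}
whose right-hand coefficient lies in $L^1(0,T)$; Gronwall's lemma and $\|u_0\|_{L^p(\Omega)}<\infty$ then give $\|u(\cdot,t)\|_{L^p(\Omega)}\le C(T)$. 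Alternatively, one may integrate the cross term by parts, use $\Delta v=v_t+v-u$ to produce $\io u^{p+1}$ (absorbed via the logarithmic Sobolev-type inequality of Lemma~\ref{fs} and the entropy bound of Lemma~\ref{glm1}), together with $\io u^pv_t$ and a sign-favourable term $\io\gamma'(v)u^pv\le0$; this is closer to the route for the classical Keller--Segel system in \cite{fs2018}, but the direct argument above is shorter here because $v$ is already a priori bounded on $(0,T)$.

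The main obstacle is the term $\io u^p|\nabla v|^2$: in two dimensions this product is scaling-critical and cannot be disposed of by H\"older and interpolation alone. What makes it tractable is precisely the a priori upper bound for $v$ furnished by the comparison method of Section~4 — this simultaneously renders $\gamma(v)$ elliptic-nondegenerate, so that the natural dissipation $\io\gamma(v)u^{p-2}|\nabla u|^2$ controls $\|\nabla u^{p/2}\|_{L^2(\Omega)}^2$, and supplies enough parabolic regularity for $v$ to place $\nabla v$ in $L^4(0,T;L^4(\Omega))$; the two ingredients then combine to close the estimate. In this sense the genuinely delicate step was already carried out in Section~4, and the present lemma is essentially bookkeeping modelled on the classical theory.
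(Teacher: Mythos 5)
Your argument is correct but follows a genuinely different route from the paper's. Both proofs start identically: test the first equation against $u^{p-1}$, use Young's inequality and the bound $|\gamma'(v)|^2/\gamma(v)\leq M_\gamma(T)$ (itself a consequence of the comparison bounds from Section~4) to absorb half the dissipation and reduce the task to controlling $\io u^p|\nabla v|^2$. From there the paths diverge. The paper applies Young once more to split this into $\io u^{p+1}$ and $\io|\nabla v|^{2(p+1)}$, controls the latter by $\int\io u^{p+1}$ via maximal $L^{2(p+1)}$-$L^{\frac{2(p+1)}{p+2}}$ regularity, and finally disposes of $\int\io u^{p+1}$ using the entropy bound of Lemma~\ref{glm1} together with the logarithmic-Sobolev-type interpolation of Lemma~\ref{fs}, tuning the free parameter $s$ there so that the $\io u^{p-2}|\nabla u|^2$ contribution can be absorbed into the dissipation. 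You instead keep $\io u^p|\nabla v|^2$ intact and exploit a different piece of a priori information: the dissipation identity of Lemma~\ref{est0} together with the lower bound for $\gamma(v)$ on $(0,T)$ gives $u\in L^2(0,T;L^2)$, which feeds into $L^2$ maximal regularity for the $v$-equation and the Ladyzhenskaya inequality to place $\nabla v\in L^4(0,T;L^4)$; H\"older, Ladyzhenskaya on $u^{p/2}$, Young and Gronwall then close the estimate with an $L^1(0,T)$ coefficient. Your route is more elementary in that it bypasses Lemma~\ref{glm1} and Lemma~\ref{fs} entirely and requires no parameter tuning — it uses only Lemma~\ref{keylem1}, Lemma~\ref{vbd} and Lemma~\ref{est0}. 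The paper's route hews closer to the classical Keller--Segel proof strategy (entropy plus Moser--Trudinger-flavoured interpolation), which is the template carried over to the boundedness analysis in Section~6, but for the purpose of this lemma both arguments are sound and rest on the same genuinely hard input, namely the pointwise bound for $v$ produced by the comparison method.
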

\begin{proof}
	Multiplying the first equation of \eqref{chemo1} by $ u^{p-1}$ we have
	\begin{eqnarray*}
		\frac{1}{p}\frac{d}{dt} \io u^p \,dx 
		&=& \io u^{p-1} u_t \,dx\\
		&=& \io u^{p-1} \nabla \cdot (\gamma(v) \nabla u + u \gamma'(v) \nabla v)\,dx,
	\end{eqnarray*}
	and by integration by parts, it follows that
	\begin{eqnarray*}
		\frac{1}{p}\frac{d}{dt} \io u^p \,dx +(p-1) \io u^{p-2}\gamma(v)|\nabla u|^2 \,dx
		=
		-(p-1) \io u^{p-1}\gamma'(v) \nabla u \cdot \nabla v \,dx.
	\end{eqnarray*}
	By the Cauchy-Schwarz inequality we have
	\begin{eqnarray*}
		\frac{1}{p}\frac{d}{dt} \io u^p \,dx +\frac{p-1}{2} \io u^{p-2}\gamma(v)|\nabla u|^2 \,dx
		&\leq&
		\frac{p-1}{2} \io \frac{u^{p}|\gamma'(v)|^2}{\gamma(v)} |\nabla v|^2 \,dx \\
		&\leq &
		pM_{\gamma}(T) \io u^{p}|\nabla v|^2 \,dx,
	\end{eqnarray*}
	where we set
	\begin{equation*}
	M_{\gamma}(T)=\sup\limits_{s\in[v_*, v^*(T)]}\frac{|\gamma'(s)|^2}{\gamma(s)} 
	\end{equation*}
	with $v^*(T)\triangleq\frac{ e^{\gamma(v_*)T}\|w_0\|_{L^\infty}+K}{1-\gamma(a)}$ in view of Lemma \ref{keylem1} and Lemma \ref{vbd}.
	Using H\"older's inequality and Young's inequality we obtain that
	\begin{align*}
	\io u^p {|\nabla v|}^2 \,dx
	&\leq
	{\bigg(\io u^{p+1}\,dx \bigg)}^{\frac{p}{p+1}}
	{\bigg(\io {|\nabla v|}^{2(p+1)}\,dx \bigg)}^{\frac{1}{p+1}}\\
	&\leq
	\dfrac{p}{p+1}\io u^{p+1} \,dx
	+
	\dfrac{1}{p+1}\io {|\nabla v|}^{2(p+1)} \,dx,
	\end{align*}
	and in view of Lemma \ref{vbd}, we obtain
	\begin{align*}
	\dfrac{1}{p}\dfrac{d}{dt}\io u^p \,dx 
	+C\io u^{p-2}{|\nabla u|}^2 \,dx
	\leq
	C \io u^{p+1} \,dx
	+ C \io {|\nabla v|}^{2(p+1)}\,dx,
	\end{align*}
	with some $C=C(T)>0$.
	
	On the other hand, by the Sobolev embedding theorem and the regularity theory for heat equations, we deduce that
	\begin{align*}
	\|\nabla v\|_{L^{2(p+1)}(\Omega)}
	\leq
	C \|v\|_{W^{2,\frac{2(p+1)}{p+2}}(\Omega)}
	\leq
	C \|(-\Delta+1) v\|_{L^{\frac{2(p+1)}{p+2}}(\Omega)}
	\end{align*}
	with positive constants $C$. 
By applying the maximal regularity argument \cite{maximal} we estimate that for some fixed $\tau_0\in(0,\frac12 T_{\mathrm{max}})$  and any $t \in (\tau_0,T)$,
\begin{align*}
\int_{\tau_0}^t \io {|\nabla v|}^{2(p+1)}
&\leq 
C \int_{\tau_0}^t  \io 
\|(-\Delta+1) v\|^{2(p+1)}_{L^{\frac{2(p+1)}{p+2}}(\Omega)}\\
&\leq 
C K_{MR}
\bigg(
  \|v(\tau_0)\|^{2(p+1)}_{W^{2,\frac{2(p+1)}{p+2}}(\Omega)}
+
\int_{\tau_0}^{t} 
{\|u\|}^{2(p+1)}_{L^{\frac{2(p+1)}{p+2}}(\Omega)}
\,ds
\bigg)\\
&\leq C \int_{\tau_0}^t \io u^{p+1}+C\|v(\tau_0)\|^{2(p+1)}_{W^{2,\frac{2(p+1)}{p+2}}(\Omega)},
\end{align*}
here we used the relation
\begin{align*}
{\|u\|}^{2(p+1)}_{L^{\frac{2(p+1)}{p+2}}(\Omega)}
\leq
\|u\|^{p+1}_{L^1(\Omega)}\io u^{p+1}.
\end{align*}
Therefore we have that any $t \in (\tau_0,T)$,
\begin{align*}
\dfrac{1}{p}
\io u^p(t) 
+
C\int_{\tau_0}^{t}  \io u^{p-2}{|\nabla u|}^2
\leq&
C \int_{\tau_0}^t \io u^{p+1}
+C\|v(\tau_0)\|^{2(p+1)}_{W^{2,\frac{2(p+1)}{p+2}}(\Omega)}+
\dfrac{1}{p}
\io u^p(\tau_0)\\
\leq& C \int_{\tau_0}^t \io u^{p+1}
+C',
\end{align*}
where $C'>0$ depends only on $\Omega,$ $\|u_0\|_{L^\infty}$ and $\|v_0\|_{W^{1,\infty}(\Omega)}$ due to the local existence result Theorem \ref{local}.

	Finally picking $s>0$ sufficiently large in Lemma \ref{fs} and recalling Lemma \ref{glm1}, we obtain that any $t \in (\tau_0,T)$,
\begin{align*}
\io u^p(\tau) 
\leq C(T),
\end{align*}
which completes the proof together with the local existence result Theorem \ref{local}.
\end{proof}

\noindent\textbf{Proof of Theorem \ref{TH1}.} After the above preparation, we may use the standard bootstrap argument to prove that 
\begin{equation*}
\sup\limits_{0<t<T}\|u(\cdot, t)\|_{L^\infty(\Omega)}\leq C(T)
\end{equation*}for any $T<T_{\mathrm{max}}$
and hence by Theorem \ref{local}, we deduce that $T_{\mathrm{max}}=+\infty$. Therefore, we prove global existence of classical solutions of problem \eqref{chemo1} when $n=2$ if  \eqref{ini}, $\mathrm{(A0)}$ and $\mathrm{(A1)}$ or $\mathrm{(A1')}$ are satisfied. 

Last, in light of the time-independent upper bound of $v$ in Lemma \ref{lm41}, we can proceed along the same lines in \cite{TaoWin17} to show the uniform-in-time boundedness of the classical solutions under assumption $\mathrm{(A2)}$. This completes the proof of Theorem \ref{TH1}.\qed

\begin{remark}\label{rem_tau}
In light of Lemma \ref{unifbddv_tau}, the above discussion still holds true if we replace the second equation of \eqref{chemo1} by 
$$
\tau v_t = \Delta v - v +u
$$
with a constant $\tau>0$.  
\end{remark}

\subsection{Classical Solutions in Dimension Three}
In this part, we study global existence of classical solution when $n=3$. First of all, we show that $\mathrm{(A3)}$ is a stronger condition than $\mathrm{(A2)}$.
\begin{lemma}\label{lemA23}
	A function satisfying $\mathrm{(A0)}$, $\mathrm{(A1)}$ and  $\mathrm{(A3)}$ must fulfill assumption  $\mathrm{(A2)}$ with any $k>1$.
\end{lemma}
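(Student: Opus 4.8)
The plan is to show that condition $\mathrm{(A3)}$, namely $2|\gamma'(s)|^2 \le \gamma(s)\gamma''(s)$, forces $1/\gamma$ to be (at worst) convex-like with controlled growth, and then to bootstrap this into the statement that $s^k\gamma(s) \to +\infty$ for every $k > 1$. The natural object to look at is $h(s) := 1/\gamma(s)$, which is positive, increasing (since $\gamma' \le 0$), and $C^3$. A direct computation gives $h'' = \big(2(\gamma')^2 - \gamma\gamma''\big)/\gamma^3$, so $\mathrm{(A3)}$ is \emph{exactly} the statement $h'' \le 0$ on $(0,\infty)$; that is, $h = 1/\gamma$ is concave. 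So I would first record this reformulation, since it is the cleanest way to use the hypothesis.

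Next I would exploit concavity of $h$ together with $h$ being increasing and bounded below by $h(0) = 1/\gamma(0) > 0$. A concave function on $(0,\infty)$ has a nonincreasing derivative, and since $h$ is increasing, $h'(s) \ge 0$ for all $s$; hence $\lim_{s\to\infty} h'(s) =: \ell$ exists with $\ell \ge 0$. If $\ell > 0$, then $h(s) \ge h(1) + \ell(s-1)$ grows at least linearly, but concavity also gives the upper bound $h(s) \le h(s_0) + h'(s_0)(s - s_0)$, so $h$ grows \emph{at most} linearly; combined this yields $h(s) \sim \text{const}\cdot s$, whence $\gamma(s) \sim c/s$ and $s^k\gamma(s) \to \infty$ for all $k > 1$. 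If $\ell = 0$, concavity forces $h' \equiv 0$ eventually impossible unless $h$ is constant (which contradicts $\mathrm{(A1)}$, since $\gamma \to 0$ means $h \to \infty$); more carefully, a concave increasing function with $h'(s)\to 0$ and $h(s)\to\infty$ is still allowed (e.g. $h(s) = \sqrt{s}$), so this sublinear case must be handled too. In that subcase $h(s) = o(s)$, hence $\gamma(s) = 1/h(s)$ decays \emph{slower} than $c/s$, so $s\gamma(s) \to \infty$, and a fortiori $s^k\gamma(s)\to\infty$ for every $k > 1$. Either way the conclusion holds.

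So the clean line of argument is: (i) rewrite $\mathrm{(A3)}$ as ``$1/\gamma$ is concave''; (ii) note $1/\gamma$ is increasing (from $\gamma'\le 0$), positive, and $\to\infty$ (from $\mathrm{(A1)}$); (iii) a concave increasing function $h$ with $h\to\infty$ satisfies $h(s) \le h(s_0) + h'(s_0)(s-s_0) = O(s)$, i.e. $h(s)/s$ is bounded as $s\to\infty$; (iv) therefore $\liminf_{s\to\infty} s\gamma(s) = 1/\limsup_{s\to\infty} (h(s)/s) > 0$, in fact $s\gamma(s)$ is bounded below by a positive constant for large $s$; (v) conclude $s^k\gamma(s) = s^{k-1}\cdot s\gamma(s) \to +\infty$ for any $k > 1$, which is precisely $\mathrm{(A2)}$ with that $k$. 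I would present step (iii) using the supporting-line characterization of concavity at a fixed point $s_0 > 0$, which avoids any delicate limiting of $h'$.

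The step I expect to require the most care is (iii)–(iv): making sure the bound $h(s) = O(s)$ is genuinely uniform and that one does not accidentally need $h'$ to have a positive limit. Using the tangent-line inequality $h(s) \le h(s_0) + h'(s_0)(s-s_0)$ for a single fixed $s_0$ sidesteps this — it only uses that $h$ lies below \emph{any} of its tangent lines, which is concavity, and $h'(s_0) < \infty$ is automatic from $h \in C^3$. One should just check $h'(s_0) \ge 0$ (true since $h$ is increasing) so the bound is not vacuous, and then $h(s)/s \le h'(s_0) + (h(s_0) - h'(s_0)s_0)/s$, which is bounded above for large $s$; hence $s\gamma(s) = s/h(s)$ is bounded below by a positive constant for $s$ large, giving $\mathrm{(A2)}$ for every $k>1$.
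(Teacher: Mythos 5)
Correct, and essentially the same approach as the paper: both arguments extract from $\mathrm{(A3)}$ that $1/\gamma$ grows at most linearly and then deduce $s^k\gamma(s)\to\infty$ for every $k>1$. Your one-line reformulation of $\mathrm{(A3)}$ as \emph{concavity of $1/\gamma$} is a cleaner packaging of the paper's computation --- the paper first proves $\gamma'<0$ strictly so that it may divide $\mathrm{(A3)}$ by $-\gamma\gamma'$ and integrate $\left(\log(-\gamma^{-2}\gamma')\right)'\le 0$ twice, whereas computing $(1/\gamma)''$ directly never requires dividing by $\gamma'$ and replaces the double integration with a single tangent-line bound.
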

\begin{proof}
	First, we point out that under the assumptions $\mathrm{(A0)}$, $\mathrm{(A1)}$ and $\mathrm{(A3)}$, $\gamma'(s)<0$ on $[0,\infty).$ In fact, due to $\mathrm{(A0)}$ and $\mathrm{(A3)}$, we have $\gamma''(s)\geq0$ for all $s\geq0$. Then if there is $s_1\geq0$ such that $\gamma'(s_1)=0$, it must hold that $0=\gamma'(s_1)\leq \gamma'(s)\leq0$ for all $s\geq s_1$ which contradicts to our assumptions $\mathrm{(A0)}$ and $\mathrm{(A1)}$.
	
	Now, we may divide \eqref{gamma3} by $-\gamma(s)\gamma'(s)$ to obtain that
	\begin{equation*}
		-\frac{2\gamma'(s)}{\gamma(s)}\leq -\frac{\gamma''(s)}{\gamma'(s)},\;\;\;\;\forall s>0,
	\end{equation*}
	which indicates that
	\begin{equation*}
		\left(\log(-\gamma^{-2}\gamma')\right)'\leq0.
	\end{equation*}
An integration of above ODI from $v_*$ to $s$ yields that
\begin{equation}
	-\gamma^{-2}(s)\gamma'(s)\leq-\gamma^{-2}(v_*)\gamma'(v_*)\triangleq d>0,
\end{equation}which further implies that
\begin{equation*}
	\left(\frac{1}{\gamma(s)}\right)'\leq d
\end{equation*}
Thus for any $s\geq v_*$, there holds
\begin{equation*}
	\frac{1}{\gamma(s)}\leq d(s-v_*)+\frac{1}{\gamma(v_*)}.
\end{equation*}
As a result, for any $k>1$, we have
\begin{equation}
	\frac{1}{s^{k}\gamma(s)}\leq \frac{d(s-v_*)}{s^k}+\frac{1}{s^k\gamma(v_*)}\rightarrow0,\;\;\;\text{as}\;s\rightarrow+\infty.
\end{equation}This completes the proof.
\end{proof}
\begin{corollary}\label{cor1}
	Assume that $n=3$ and $\gamma(\cdot)$ satisfies $\mathrm{(A0)}$, $\mathrm{(A1)}$ and  $\mathrm{(A3)}$. Then $v$ has a uniform-in-time upper bound in $\Omega\times[0,T_{\mathrm{max}}).$
\end{corollary}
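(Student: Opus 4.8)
The plan is to combine Lemma~\ref{lemA23} with the growth-condition version of the upper bound estimate already established in Lemma~\ref{lm41}. Recall that Corollary~\ref{cor1} is stated precisely in the three-dimensional case $n=3$ under the hypotheses $\mathrm{(A0)}$, $\mathrm{(A1)}$ and $\mathrm{(A3)}$. By Lemma~\ref{lemA23}, these hypotheses force $\gamma$ to satisfy $\mathrm{(A2)}$ with any $k>1$; in particular $1/\gamma$ grows at most linearly, so we may fix some admissible exponent $k\in(1,2)$, which falls strictly inside the range $0<k<\frac{2}{(n-2)_+}=2$ required by Lemma~\ref{lm41}.

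With such a $k$ in hand, the conclusion is essentially immediate: I would simply invoke Lemma~\ref{lm41}, which asserts that under $\mathrm{(A0)}$, $\mathrm{(A1)}$ and $\mathrm{(A2)}$ with $0<k<\frac{2}{(n-2)_+}$ there is a constant $C>0$, depending only on $\gamma$, $\Omega$ and the initial data, such that $v(x,t)\le C$ for all $(x,t)\in\Omega\times[0,T_{\mathrm{max}})$. Since $n=3$ is covered by that lemma and the exponent $k$ produced by Lemma~\ref{lemA23} is admissible, this yields the desired uniform-in-time upper bound for $v$ on $\Omega\times[0,T_{\mathrm{max}})$, completing the proof.

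There is essentially no genuine obstacle here, since the analytic heavy lifting has already been done in Lemma~\ref{lm41} (the Helmholtz testing estimate, the Gagliardo--Nirenberg interpolation for $n=3$, and the uniform Gronwall argument) and in Lemma~\ref{lemA23} (the ODE comparison converting $\mathrm{(A3)}$ into linear growth of $1/\gamma$). The only point requiring a line of care is to verify that the exponent supplied by Lemma~\ref{lemA23}, namely any $k>1$, has nonempty intersection with the interval $(0,2)$ demanded by Lemma~\ref{lm41}, which it obviously does — e.g. $k=\tfrac{3}{2}$ works. Thus the ``hard part'' is purely bookkeeping: matching the range of the growth exponent between the two lemmas. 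I would write the proof in two sentences: first cite Lemma~\ref{lemA23} to fix $k\in(1,2)$ with $\lim_{s\to+\infty}s^{k}\gamma(s)=+\infty$, then apply Lemma~\ref{lm41} with this $k$ and $n=3$ to conclude.
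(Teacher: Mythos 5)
Your proposal is correct and is exactly the argument the paper intends: Lemma~\ref{lemA23} shows that $\mathrm{(A0)}$, $\mathrm{(A1)}$, $\mathrm{(A3)}$ imply $\mathrm{(A2)}$ with any $k>1$, and since for $n=3$ Lemma~\ref{lm41} applies for any $k\in(0,2)$, one simply fixes $k\in(1,2)$ and invokes that lemma. The paper states the corollary without an explicit proof precisely because it is this immediate combination of the two preceding results, and your bookkeeping check that the admissible ranges $(1,\infty)$ and $(0,2)$ overlap is the only point that needs to be verified.
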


Next, we derive the following energy estimates.
\begin{lemma}\label{lemn3exist}
	Assume $n=3$.  Suppose that  $\gamma(\cdot)$ satisfies  $\mathrm{(A0)}$,  $\mathrm{(A1)}$,  and  $\mathrm{(A3)}$. Then there is $C>0$ depending only on the initial data and $\Omega$ such that
	\begin{equation*}
		\sup\limits_{0\leq t<T_{\mathrm{max}}}\int_\Omega u^2dx\leq C.
	\end{equation*}
\end{lemma}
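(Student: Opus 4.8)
The goal is an $L^\infty_t L^2_x$ bound on $u$ in three dimensions, using that $\gamma$ now satisfies the strong structural condition $\mathrm{(A3)}$. The plan is to test the first equation of \eqref{chemo1} against $u$ itself. Writing the equation in the variant form $u_t=\nabla\cdot(\gamma(v)\nabla u+u\gamma'(v)\nabla v)$ and integrating by parts,
\begin{equation*}
\frac12\frac{d}{dt}\io u^2\,dx+\io\gamma(v)|\nabla u|^2\,dx=-\io u\gamma'(v)\nabla u\cdot\nabla v\,dx.
\end{equation*}
The key algebraic point is that $\mathrm{(A3)}$, $2|\gamma'|^2\le\gamma\gamma''$, is exactly the condition under which the right-hand side can be absorbed. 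Indeed, by Young's inequality the right side is bounded by $\frac12\io\gamma(v)|\nabla u|^2+\frac12\io\frac{|\gamma'(v)|^2}{\gamma(v)}u^2|\nabla v|^2$, so after absorbing the first term one is left with controlling $\io\frac{|\gamma'(v)|^2}{\gamma(v)}u^2|\nabla v|^2$. The idea is to rewrite this using $\mathrm{(A3)}$ so that it becomes a divergence-type or "good" term. More precisely, I would look for a substitution of the form $\psi(v)$ (e.g. $\psi'=\sqrt{\gamma''/\gamma}$ or related to $1/\sqrt{\gamma}$) so that the bad term is matched against $\io$ of a perfect-square structure; $\mathrm{(A3)}$ guarantees the resulting quadratic form in $(\nabla u,\nabla v)$ is a complete square, hence sign-definite. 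This is the heart of the proof and the step I expect to be the main obstacle: finding the correct weighted test function / algebraic identity that turns $\mathrm{(A3)}$ into absorption rather than merely a growth bound on $1/\gamma$.

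Once the bad gradient term is handled, what remains are lower-order terms involving $\io$ of $v$, $|\nabla v|^2$, $u$, etc., with coefficients that are bounded because, by Corollary \ref{cor1}, $v$ has a \emph{uniform-in-time} upper bound on $\Omega\times[0,T_{\max})$ (and a uniform lower bound via Lemma \ref{lowbound} when $v_0>0$, else $v_*=0$ suffices). Thus quantities like $\sup|\gamma'(v)|$, $\inf\gamma(v)$, $\sup|\gamma''(v)|$ are all finite and uniform in time. Using this, together with the $L^1$-conservation of $u$ (Theorem \ref{local}) and the energy estimate of Lemma \ref{est0} which gives $\int_0^t\io\gamma(v)u^2\,dx\,ds\le C(1+t)$ — better, since $\gamma(v)\ge\gamma(v^*)>0$ uniformly, $\int_0^t\io u^2\,dx\,ds$ is controlled on bounded time intervals — I would derive a differential inequality of the form
\begin{equation*}
\frac{d}{dt}\io u^2\,dx+c\io|\nabla u|^2\,dx\le C_1\io u^2\,dx+C_2.
\end{equation*}
To close this into a \emph{uniform-in-time} bound rather than an exponentially growing one, I would invoke the uniform Gronwall inequality (Lemma \ref{uniformGronwall}): one needs $\int_t^{t+\tau}\io u^2\le a_3$ uniformly in $t$. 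This is where Lemma \ref{glm1} and Lemma \ref{est0} come in — the former controls $\sup_t\io u\log u$ on bounded intervals, but for the uniform version one should instead combine the $L^2$-in-spacetime bound on $u$ (from $\gamma(v)$ bounded below plus Lemma \ref{est0} after the uniform-$v$ bound improves $Ct$ to something absorbable via a dissipation term, or via a Gagliardo–Nirenberg interpolation $\io u^2\le\e\io|\nabla u|^2+C_\e(\io u)^2$ in dimension $3$). With the coefficient $C_1$ controlled in the time-integrated sense, Lemma \ref{uniformGronwall} yields $\sup_{0\le t<T_{\max}}\io u^2\le C$ with $C$ independent of $T_{\max}$.

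In summary, the ordered steps are: (i) test the equation against $u$ and integrate by parts; (ii) use $\mathrm{(A3)}$ via a suitably weighted quantity to convert the cross term $\io u\gamma'(v)\nabla u\cdot\nabla v$ into an absorbable complete square plus harmless lower-order terms, exploiting the uniform bounds on $v$ and its $\gamma$-dependent quantities from Corollary \ref{cor1} and Lemma \ref{lowbound}; (iii) derive a differential inequality $\frac{d}{dt}\io u^2+c\io|\nabla u|^2\le C_1\io u^2+C_2$ and interpolate (Gagliardo–Nirenberg in $n=3$) so the $C_1\io u^2$ term is partly absorbed into the dissipation; (iv) use the spacetime $L^2$ control of $u$ from Lemma \ref{est0} (now uniform on unit time intervals thanks to the uniform $v$-bound) to feed the uniform Gronwall inequality and conclude a uniform-in-time bound. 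The delicate point throughout is that $\mathrm{(A3)}$ must be used \emph{exactly}, not as a mere growth condition on $1/\gamma$; any slack there would leave a super-quadratic term in $u$ that cannot be controlled in three dimensions.
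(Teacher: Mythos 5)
Your overall skeleton is right (test the first equation against $u$, observe that $\mathrm{(A3)}$ should make some quadratic form in $(\nabla u,u\nabla v)$ sign-definite, derive a differential inequality, close with the uniform Gronwall lemma), and your intuition that $\mathrm{(A3)}$ is exactly the discriminant condition for a nonnegative quadratic form is correct. But the proposal has a genuine gap at precisely the point you flag as ``the heart of the proof'': you do not identify where the $\gamma''(v)\,u^2|\nabla v|^2$ term comes from, and the mechanism you suggest --- a substitution $\psi(v)$ applied to the $u$--equation --- cannot produce it. Differentiating $\gamma(v)$ through the divergence structure of the first equation only yields $\gamma'$; a genuine $\gamma''$ (which is what $\mathrm{(A3)}$ pairs against $|\gamma'|^2/\gamma$) must come from testing the \emph{second} equation. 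What the paper does is multiply $v_t-\Delta v+v=u$ by $-u^2\gamma'(v)$ and integrate by parts, which produces
\begin{equation*}
-\frac{d}{dt}\io\gamma(v)u^2-2\io|\nabla(u\gamma(v))|^2-2\io u\gamma'(v)\nabla u\cdot\nabla v-\io u^2\gamma''(v)|\nabla v|^2-\io u^2\gamma'(v)v=-\io u^3\gamma'(v),
\end{equation*}
then forms the linear combination ``$u$--identity $+\lambda\,\times$ ($v$--identity)'' and chooses $\lambda=1$. After Young's inequality, the relevant quadratic content is $2\gamma|\nabla u|^2+(2+2\lambda)\gamma'u\,\nabla u\cdot\nabla v+\lambda\gamma''u^2|\nabla v|^2$, which is nonnegative iff $(1+\lambda)^2|\gamma'|^2\le 2\lambda\gamma\gamma''$; at $\lambda=1$ this is exactly $\mathrm{(A3)}$. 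As bonuses, the $v$--equation test also contributes the good (nonnegative) terms $\io u^3|\gamma'(v)|$ and $\io|\nabla(u\gamma(v))|^2$, and the remaining term $-\io u^2\gamma'(v)v$ is bounded by $C\io u^2$ using the uniform-in-time upper bound on $v$ (Corollary~\ref{cor1}) and Lemma~\ref{lowbound}. This coupled-multiplier trick is the missing idea in your plan; without it, the single-equation ``substitution'' you gesture at leaves the term $\io\frac{|\gamma'(v)|^2}{\gamma(v)}u^2|\nabla v|^2$ unabsorbed.

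There is a second, smaller slip in how you close the argument. For the uniform Gronwall step one needs $\int_t^{t+\tau}\io u^2\le C$ uniformly in $t$, and you attribute this to Lemma~\ref{est0}; but that lemma only gives a bound growing linearly in $t$. The correct source is the estimate $\int_t^{t+\tau}\io\gamma(v)u^2\,dx\,ds\le C$ established in Lemma~\ref{lm41} (inequality \eqref{ubw2}), combined with the fact that the uniform upper bound on $v$ forces $\gamma(v)\ge\gamma_*>0$, so $\io\gamma(v)u^2$ controls $\io u^2$ from below. With that uniform spacetime $L^2$ bound in hand, the uniform Gronwall lemma applied to $y(t)=\io(1+\gamma(v))u^2$ gives the claimed bound.
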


\begin{proof}
	Multiplying the first equation of \eqref{chemo1} by $2u$ and integrating by parts, we obtain that
	\begin{equation}\label{vk1}
		\frac{d}{dt}\int_\Omega u^2dx+2\int_\Omega \gamma(v)|\nabla u|^2dx=-2\int_\Omega \gamma'(v)u\nabla u\cdot\nabla vdx.
	\end{equation}
On the other hand, we multiply the second equation by $-u^2\gamma'(v)$ to obtain that
\begin{equation*}
	-\int_\Omega v_t u^2\gamma'(v)-2\int_\Omega u\gamma'(v)\nabla u\cdot\nabla v-\int_\Omega u^2\gamma''(v)|\nabla v|^2-\int_\Omega u^2\gamma'(v)v=-\int_\Omega u^3\gamma'(v),
\end{equation*}	
	where we observe that
	\begin{equation*}
		\begin{split}	-\int_\Omega v_t u^2\gamma'(v)=&-\frac{d}{dt}\int_\Omega \gamma(v)u^2+2\int_\Omega uu_t\gamma(v)\\
		=&-\frac{d}{dt}\int_\Omega \gamma(v)u^2+2\int_\Omega u\gamma(v)\Delta(u\gamma(v))\\
		=&-\frac{d}{dt}\int_\Omega \gamma(v)u^2-2\int_\Omega |\nabla(u\gamma(v))|^2.
		\end{split}
	\end{equation*}
Therefore, we have
\begin{equation}
\begin{split}\label{vk3}
		\frac{d}{dt}&\int_\Omega \gamma(v)u^2+2\int_\Omega |\nabla(u\gamma(v))|^2+\int_\Omega u^2\gamma''(v)|\nabla v|^2-\int_\Omega u^3\gamma'(v)\\
		&=-\int_\Omega u\gamma'(v)v-2\int_\Omega u\gamma'(v)\nabla u\cdot\nabla v.
\end{split}
\end{equation}	
Now, multiplying \eqref{vk3} by $\lambda$ with $\lambda>0$ to be specified below and adding the resultant to \eqref{vk1}, we obtain that
\begin{equation}
	\begin{split}
		\frac{d}{dt}\int_\Omega\left(1+\lambda \gamma(v)\right)u^2+&2\lambda\int_\Omega|\nabla(u\gamma(v))|^2+2\int_\Omega\gamma(v)|\nabla u|^2+\lambda\int_\Omega \gamma''(v) u^2|\nabla v|^2\\
		-&\lambda\int_\Omega u^3\gamma'(v)=-\int_\Omega\left(2+2\lambda\right)u\gamma'(v)\nabla u\cdot\nabla v-\lambda\int_\Omega u^2\gamma'(v)v.
	\end{split}
\end{equation}	
Invoking the Young inequality, we infer that
\begin{equation*}
	\begin{split}
		\left|\int_\Omega\left(2+2\lambda\right)u\gamma'(v)\nabla u\cdot\nabla v\right|\leq& 2\int_\Omega  \gamma(v)|\nabla u|^2+\int_\Omega \frac{(1+\lambda)^2|\gamma'(v)|^2}{2\gamma(v)}u^2|\nabla v|^2.	\end{split}
\end{equation*}
Under the assumption 
\begin{equation*}
	2|\gamma'(v)|^2\leq \gamma(v)\gamma''(v),
\end{equation*}
one finds that $\lambda=1$ fulfills 
\begin{equation}\label{condition1}
	\frac{(1+\lambda)^2|\gamma'(v)|^2}{2\gamma(v)}\leq \lambda\gamma''(v).
\end{equation}
As a result, we obtain from above that
\begin{equation}
		\begin{split}
	\frac{d}{dt}\int_\Omega\left(1+ \gamma(v)\right)u^2+2\int_\Omega|\nabla(u\gamma(v))|^2+\int_\Omega u^3|\gamma'(v)|
	\leq-\int_\Omega u^2\gamma'(v)v.
	\end{split}
\end{equation}
Thanks to Corollary \ref{cor1} and Lemma \ref{lowbound}, 
\begin{equation*}
	\left|\int_\Omega u^2\gamma'(v)v\right|\leq C\int_\Omega u^2dx.
\end{equation*}
Thus, we obtain that
\begin{equation}
\begin{split}
\frac{d}{dt}\int_\Omega\left(1+ \gamma(v)\right)u^2+2\int_\Omega|\nabla(u\gamma(v))|^2+\int_\Omega u^3|\gamma'(v)|\leq C\int_\Omega u^2dx.
\end{split}
\end{equation}
On the other hand, since now $v$ is bounded from above and below, there is $\gamma_*>0$ such that $\gamma_*\leq\gamma(v)\leq \gamma(v_*)$ and it follows from \eqref{ubw2} that
\begin{equation}
\int_t^{t+\tau} \int_\Omega(1+\gamma(v))u^2dxds\leq C.
\end{equation}
Now we may apply the uniform Gronwall inequality together with the local existence result to conclude that
\begin{equation*}
	\int_\Omega (1+\gamma(v))u^2dx\leq C,\;\;\forall\;t\in[0,T_{\mathrm{max}}).
\end{equation*}This completes the proof.
\end{proof}
\begin{remark}
	Our assumption $\mathrm{(A3)}$ is independent of the coefficients of the system. If we replace the second equation of system \eqref{chemo1} by $v_t-\alpha\Delta v+\beta v=\theta u$ with some $\alpha,\beta,\theta>0$, one easily checks that condition \eqref{condition1} becomes
	\begin{equation}
			\frac{(1+\alpha\lambda)^2|\gamma'(v)|^2}{2\gamma(v)}\leq \alpha\lambda\gamma''(v),
	\end{equation}
which holds with $\lambda=1/\alpha$ under assumption $\mathrm{(A3)}$.
\end{remark}
\noindent\textbf{Proof of Theorem \ref{TH3d}.} With the aid of Lemma \ref{lemn3exist}, we may further use standard the bootstrap argument to prove that 
\begin{equation*}
\sup\limits_{0<t<T}\|u(\cdot, t)\|_{L^\infty(\Omega)}\leq C
\end{equation*}for any $T<T_{\mathrm{max}}$. Since similar argument is given in detail in \cite{Anh19}, we omit the proof here.
Finally, by Theorem \ref{local}, we deduce that $T_{\mathrm{max}}=+\infty$ and Theorem \ref{TH3d} is proved.\qed


\section{The Critical Mass Phenomenon with $\gamma(v)=e^{-v}$}

This section is devoted to the special case $\gamma(v)=e^{-v}$. Namely, we consider the following initial Neumann boundary value problem:
\begin{equation}
\begin{cases}\label{chemo2}
u_t=\Delta (ue^{-v})&x\in\Omega,\;t>0,\\
v_t-\Delta v+v=u&x\in\Omega,\;t>0,\\
\partial_\nu u=\partial_\nu v=0,\qquad &x\in\partial\Omega,\;t>0,\\
u(x,0)=u_0(x),\;v(x,0)=v_0(x)\qquad & x\in\Omega,
\end{cases}
\end{equation}with $\Omega\subset\mathbb{R}^2.$ 
\subsection{Uniform-in-time Boundedness with Sub-critical Mass}
In this part, we first prove the following uniform-in-time boundedness of the classical solutions with sub-critical mass.
\begin{proposition}\label{prop1}
	Assume $n=2$ and let
	\begin{equation*}
	\Lambda_c=\begin{cases}
	8\pi\qquad\text{if}\;\Omega=\{x\in\mathbb{R}^2;\;|x|<R\}\;\;\text{and}\;(u_0,v_0) \;\text{is radial in}\;x,\\
	4\pi\qquad\text{otherwise.}		
	\end{cases}
	\end{equation*}
	If $\Lambda\triangleq\int_\Omega u_0dx<\Lambda_c$, then the global classical solution $(u,v)$ to system \eqref{chemo2} is uniformly-in-time bounded in the sense that
	\begin{equation*}
	\sup\limits_{t\in(0,\infty)}\left(\|u(\cdot,t)\|_{L^\infty(\Omega)}+\|v(\cdot,t)\|_{L^\infty(\Omega)}\right)<\infty.
	\end{equation*}
\end{proposition}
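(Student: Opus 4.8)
The plan is to establish uniform-in-time boundedness via the Lyapunov functional together with the comparison-based pointwise bound for $v$. Since $\gamma(v)=e^{-v}$ satisfies $\mathrm{(A0)}$ and $\mathrm{(A1)}$, Theorem~\ref{TH1} already guarantees that the classical solution exists globally, so only the uniform-in-time bound remains. From the dissipation identity \eqref{Lyapunov1}, integrating in time gives $\mathcal{F}(u(t),v(t)) + \int_0^t\!\!\int_\Omega ue^{-v}|\nabla\log u-\nabla v|^2\,dx\,ds + \int_0^t\|v_t\|_{L^2}^2\,ds = \mathcal{F}(u_0,v_0)$, so $\mathcal{F}(u(t),v(t))$ is nonincreasing and in particular bounded above by $\mathcal{F}(u_0,v_0)$ for all $t\ge 0$.

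The key analytic input is a lower bound for $\mathcal{F}$ that controls $\int_\Omega u\log u$ when the mass is subcritical. Writing $\mathcal{F}(u,v) = \int_\Omega u\log u\,dx + \tfrac12\|v\|_{H^1(\Omega)}^2 - \int_\Omega uv\,dx$, one estimates $\int_\Omega uv$ by the elliptic regularity relating $v$ to $u$ through $v_t-\Delta v+v=u$; more precisely, since $\|v\|_{H^1}^2 \gtrsim$ quantities controlling $\int uv$ modulo a Trudinger--Moser / Gagliardo--Nirenberg type inequality, the subcritical condition $\Lambda<\Lambda_c$ (with $\Lambda_c=4\pi$, or $8\pi$ in the radial case on a ball) is exactly what makes the functional coercive. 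I would invoke the logarithmic Hardy--Littlewood--Sobolev inequality, or equivalently the Moser--Trudinger inequality on $\Omega$ with sharp constant, to deduce that there exist constants $C_1>0$ and $C_2$ such that $\mathcal{F}(u,v)\ge C_1\int_\Omega u\log u\,dx - C_2$ for all admissible $(u,v)$ with $\int_\Omega u=\Lambda<\Lambda_c$. Combined with the Lyapunov bound, this yields $\sup_{t\ge0}\int_\Omega u\log u\,dx \le C$ and $\sup_{t\ge0}\|v(t)\|_{H^1(\Omega)}\le C$.

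With the uniform bound $\sup_{t}\int_\Omega u\log u\,dx\le C$ in hand, I would upgrade to a uniform $L^p$ bound on $u$ for some $p>1$, then to a uniform $L^\infty$ bound. Here the comparison method plays its role: Lemma~\ref{lm41} (applicable since $\gamma(v)=e^{-v}$ trivially satisfies $\mathrm{(A2)}$ for every $k>0$, hence in particular for $0<k<\tfrac{2}{(n-2)_+}$ when $n=2$) already gives a uniform-in-time upper bound $v\le C$ on $\Omega\times[0,\infty)$. Once $v$ is uniformly bounded, $\gamma(v)=e^{-v}$ is uniformly bounded below and above, so the first equation becomes a uniformly parabolic equation $u_t=\nabla\cdot(e^{-v}\nabla u - e^{-v}u\nabla v)$ with bounded, non-degenerate diffusion; the uniform $L\log L$ bound on $u$ together with the uniform bound on $v$ and standard Moser iteration / semigroup estimates (as in the two-dimensional arguments of \cite{fs2018,TaoWin17}) then yield $\sup_{t\ge0}\|u(t)\|_{L^\infty(\Omega)}<\infty$, and elliptic/parabolic regularity for the $v$-equation gives $\sup_{t\ge0}\|v(t)\|_{L^\infty(\Omega)}<\infty$, completing the proof. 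To implement the uniform (rather than merely local) $L^p$ estimate one should work with the uniform Gronwall inequality Lemma~\ref{uniformGronwall}, using the time-integrated bounds coming from the Lyapunov dissipation together with Lemma~\ref{fs}.

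The main obstacle I anticipate is the sharp functional inequality step: obtaining the coercivity estimate $\mathcal{F}(u,v)\ge C_1\int_\Omega u\log u - C_2$ with the correct threshold $\Lambda_c$, and in particular handling the boundary effects that distinguish the non-radial constant $4\pi$ from the radial-on-a-ball constant $8\pi$ (the former coming from the fact that mass can concentrate at a boundary point, where only ``half'' a bubble is needed). This requires the Moser--Trudinger inequality on $\Omega$ with the boundary-sensitive sharp constant, and for the radial case a one-dimensional reduction exploiting that radial concentration can only occur at the center. Everything downstream of that inequality — converting $L\log L$ control into $L^\infty$ control — is comparatively routine given the uniform bound on $v$ furnished by the comparison method.
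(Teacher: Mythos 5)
Your opening moves are sound: global existence follows from Theorem~\ref{TH1}, the Lyapunov identity makes $\mathcal{F}$ nonincreasing, and for $\Lambda<\Lambda_c$ the sharp Moser--Trudinger inequality (via \cite[Lemma 3.4]{Nagai97}) gives uniform bounds on $\|v\|_{H^1}$, $\int_\Omega uv$, and $\int_\Omega u\log u$. This matches the paper's Lemmas 6.1--6.2.

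The gap is in the next step, and it is not a minor one. You assert that Lemma~\ref{lm41} applies because ``$\gamma(v)=e^{-v}$ trivially satisfies $\mathrm{(A2)}$ for every $k>0$.'' This is false: $\mathrm{(A2)}$ requires $\lim_{s\to\infty}s^k\gamma(s)=+\infty$ for some $k>0$, i.e.\ $1/\gamma$ grows at most polynomially, whereas $1/\gamma(s)=e^{s}$ grows exponentially and $s^k e^{-s}\to 0$ for every $k$. Lemma~\ref{lm41} therefore does \emph{not} apply to $\gamma(v)=e^{-v}$, and your route to the uniform-in-time bound $v\le C$ collapses. Indeed, if $\mathrm{(A2)}$ held, Theorem~\ref{TH1} would already give unconditional boundedness with no mass restriction --- the whole point of Section~6 and the critical-mass dichotomy is that $e^{-v}$ violates $\mathrm{(A2)}$, so the $v$-bound must come from the subcritical-mass assumption rather than from the growth of $1/\gamma$.

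What the paper does instead to close this gap: using the $H^1$ bound on $v$ together with Trudinger--Moser exponential integrability, it multiplies the first equation by $w=(I-\Delta)^{-1}[u]$ to obtain a differential inequality that yields the time-averaged control $\sup_t\int_t^{t+1}\int_\Omega e^{-v}u^2\,dx\,ds\le C$ (Lemma~\ref{ublm2}); then, via the elliptic estimate $\|w\|_{L^\infty}\lesssim\|u\|_{L^{3/2}}$ and H\"older against $\int_\Omega e^{3v}$, this gives $\sup_t\int_t^{t+1}\|w\|_{L^\infty}\,ds\le C$; finally the comparison identity $w_t+ue^{-v}=(I-\Delta)^{-1}[ue^{-v}]\le w$ together with the uniform Gronwall lemma upgrades this to a pointwise uniform bound on $w$, and hence on $v$ via Lemma~\ref{vbd} (Lemma~\ref{ublm3}). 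The remainder of your argument (bootstrap to $L^\infty$ once $v$ is uniformly bounded) is correct but presupposes exactly the step that is missing. You should replace the appeal to Lemma~\ref{lm41} with an argument of the above type; as written, the proof does not go through.
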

First, system \eqref{chemo2} is a dissipative dynamical system.
\begin{lemma} There holds
	\begin{equation}\label{Lyapunov}
	\frac{d}{dt}\mathcal{F}(u,v)(t)+	\int_\Omega ue^{-v}\left|\nabla \log u-\nabla v\right|^2dx+\|v_t\|_{L^2(\Omega)}^2=0,
	\end{equation} where the  functional $\mathcal{F}(\cdot,\cdot)$ is defined by
	\begin{equation*}\non
	\mathcal{F}(u,v)=\int_\Omega \left(u\log u+\frac12|\nabla v|^2+\frac12 v^2-uv\right)dx.
	\end{equation*}
\end{lemma}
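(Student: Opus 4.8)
The plan is to compute the time derivative of $\mathcal{F}(u,v)$ directly, using the two equations of \eqref{chemo2}, and then massage the resulting boundary-free integrals into the stated dissipation. First I would differentiate term by term:
\[
\frac{d}{dt}\mathcal{F}(u,v) = \io (\log u + 1)u_t \,dx + \io \nabla v\cdot\nabla v_t \,dx + \io v v_t\,dx - \io u_t v\,dx - \io u v_t\,dx.
\]
The term $\io u_t\,dx$ vanishes by mass conservation (Theorem \ref{local}), so the contribution of the ``$+1$'' drops out. For the middle two terms, integration by parts (using $\dN v = 0$) gives $\io \nabla v\cdot\nabla v_t + \io v v_t = \io(-\Delta v + v)v_t\,dx = \io(u - v_t)v_t\,dx = \io u v_t\,dx - \|v_t\|_{L^2(\Omega)}^2$, by the second equation of \eqref{chemo2}. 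This exactly cancels the $-\io u v_t\,dx$ term and produces the $-\|v_t\|_{L^2(\Omega)}^2$ we want. So at this stage
\[
\frac{d}{dt}\mathcal{F}(u,v) + \|v_t\|_{L^2(\Omega)}^2 = \io (\log u) u_t\,dx - \io u_t v\,dx = \io (\log u - v)u_t\,dx.
\]

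Next I would substitute $u_t = \Delta(ue^{-v})$ and integrate by parts once more (again $\dN(ue^{-v}) = 0$ on $\pO$, which follows from the no-flux conditions on $u$ and $v$):
\[
\io (\log u - v) u_t\,dx = -\io \nabla(\log u - v)\cdot\nabla(ue^{-v})\,dx.
\]
The key algebraic identity is $\nabla(ue^{-v}) = e^{-v}(\nabla u - u\nabla v) = u e^{-v}(\nabla\log u - \nabla v)$, valid wherever $u > 0$; since $\nabla(\log u - v) = \nabla\log u - \nabla v$, the integrand becomes $-ue^{-v}|\nabla\log u - \nabla v|^2$, which is precisely the missing dissipation term. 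Rearranging gives \eqref{Lyapunov}.

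The main obstacle is purely a matter of rigor rather than of idea: the manipulation involves $\log u$ and $\nabla\log u$, which are only well-defined and integrable where $u$ is strictly positive, and the boundary integrations by parts presuppose enough regularity up to $\pO$. For a classical solution on $\Omega\times(0,T_{\mathrm{max}})$ this is standard — $u$ is smooth and, by the strong maximum principle applied to the (non-degenerate, since $v$ is bounded on compact time intervals by Lemma \ref{vbd}) parabolic equation for $u$, strictly positive for $t>0$ — so the formal computation is justified; if one wants to be careful near $t=0$ one works on $[\tau, T)$ and notes all quantities are continuous. I would state this regularity/positivity remark briefly and then present the three-step computation above. No genuinely hard estimate is needed; the content is the exact cancellation of the $\io u v_t$ terms and the recognition of the Keller--Segel-type dissipation structure $\nabla(ue^{-v}) = ue^{-v}\nabla(\log u - v)$.
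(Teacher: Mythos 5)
Your proposal is correct and is essentially the same computation as the paper's, just organized in the reverse direction: the paper multiplies the first equation by $\log u - v$ and the second by $v_t$, integrates by parts, and adds, while you differentiate $\mathcal{F}$ and substitute the equations, but the integrations by parts and the cancellation of the $\int_\Omega u v_t$ terms are identical. Your added remark on strict positivity of $u$ and regularity up to $t=0$ is a reasonable point of rigor that the paper leaves implicit.
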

\begin{proof}
	Multiplying the first equation of \eqref{chemo2} by $\log u-v$, the second equation of \eqref{chemo2} by $v_t$ and integrating by parts, then adding the resultants together, we get
	\begin{equation}\non
	\frac{d}{dt}\int_\Omega \left(u\log u+\frac12|\nabla v|^2+\frac12 v^2-uv\right)dx+\int_\Omega ue^{-v}\left|\nabla \log u-\nabla v\right|^2dx+\|v_t\|_{L^2(\Omega)}^2=0.
	\end{equation}This completes the proof.	
\end{proof}
Since the energy $\mathcal{F}(\cdot,\cdot)$ is the same as that of the classical Keller--Segel model, we may recall \cite[Lemma 3.4]{Nagai97} stated as follows.
\begin{lemma}\label{ublm1} 
	If $\Lambda<\Lambda_c$, there exists a positive constant $C$ independent of $t$ such that
	\begin{equation*}
	\|v\|_{H^1(\Omega)}\leq C,\;\;\int_\Omega uvdx\leq C\qquad\text{and}\qquad|\mathcal{F}(u(t),v(t))|\leq C,\;\;\forall \;t\geq0.
	\end{equation*}	
\end{lemma}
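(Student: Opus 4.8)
The plan is to combine the energy dissipation identity \eqref{Lyapunov} with a sharp logarithmic Hardy--Littlewood--Sobolev (Moser--Trudinger) inequality, exactly as in the classical Keller--Segel setting. First I would integrate \eqref{Lyapunov} in time to obtain, for all $t\geq0$,
\begin{equation*}
\mathcal{F}(u(t),v(t))\leq \mathcal{F}(u_0,v_0),
\end{equation*}
so the energy is bounded from above by a constant depending only on the initial data. The task is then to extract control of $\|v\|_{H^1(\Omega)}$ and $\int_\Omega uv\,dx$ from this upper bound, which is possible precisely because $\Lambda<\Lambda_c$.

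The key step is a lower bound for $\mathcal{F}(u,v)$. Using the second equation one eliminates $u$ in the coupling term: testing $-\Delta v+v=u$ against $v$ gives $\int_\Omega uv\,dx=\|v\|_{H^1(\Omega)}^2$, so
\begin{equation*}
\mathcal{F}(u,v)=\int_\Omega u\log u\,dx-\tfrac12\|v\|_{H^1(\Omega)}^2.
\end{equation*}
Now I would bound $\|v\|_{H^1(\Omega)}^2=\int_\Omega uv\,dx$ from above by the entropy $\int_\Omega u\log u\,dx$ via the logarithmic HLS inequality: since $v=(I-\Delta)^{-1}u$ up to boundary corrections, one has an estimate of the form $\int_\Omega uv\,dx\leq \frac{1}{4\pi}\|u\|_{L^1}\int_\Omega u\log u\,dx + C_\Lambda$ in the generic case (respectively with $8\pi$ in the radial case on a ball), with $C_\Lambda$ depending on $\Lambda$ and $\Omega$. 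Plugging this into the expression for $\mathcal{F}$ yields
\begin{equation*}
\mathcal{F}(u,v)\geq \Bigl(1-\frac{\Lambda}{\Lambda_c}\Bigr)\int_\Omega u\log u\,dx - C_\Lambda,
\end{equation*}
and since $\Lambda<\Lambda_c$ the prefactor is a positive constant. Combining with the upper bound on $\mathcal{F}$ gives a uniform bound on $\int_\Omega u\log u\,dx$, hence on $\int_\Omega uv\,dx=\|v\|_{H^1(\Omega)}^2$, hence on $\mathcal{F}$ itself (both from above and below). This is the content of \cite[Lemma 3.4]{Nagai97}, invoked here verbatim because the functional and the constraint are identical to the classical case; so in the write-up I would simply cite that result rather than reprove the logarithmic HLS inequality and its extremal constant.

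The main obstacle in a self-contained proof would be establishing the sharp constant $\Lambda_c$ in the logarithmic HLS / Moser--Trudinger inequality on a bounded domain, together with the distinction between the $4\pi$ threshold for general domains and the $8\pi$ threshold in the radial case on a ball (the latter reflecting that radial concentration can only happen at the origin rather than at a boundary point). Since the present paper is allowed to quote \cite{Nagai97}, this difficulty is bypassed: the only thing to check is that the Lyapunov functional $\mathcal{F}$ and the mass constraint $\Lambda=\|u_0\|_{L^1}$ coincide with those in \cite{Nagai97}, which is immediate from \eqref{Lyapunov} and the mass conservation in Theorem \ref{local}. Thus the proof of Lemma \ref{ublm1} reduces to: (i) integrate \eqref{Lyapunov} to bound $\mathcal{F}$ from above; (ii) apply \cite[Lemma 3.4]{Nagai97} to conclude the three stated bounds.
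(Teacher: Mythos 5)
Your final write-up plan --- integrate \eqref{Lyapunov} to obtain $\mathcal{F}(u(t),v(t))\leq\mathcal{F}(u_0,v_0)$ for all $t\geq0$, then invoke \cite[Lemma 3.4]{Nagai97}, noting that the Lyapunov functional and mass constraint coincide with the classical Keller--Segel case --- is exactly what the paper does, so the proof as you would ultimately write it is correct.

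However, the intermediate ``explanation'' of what the cited lemma does contains a genuine error that is worth flagging. You claim that testing $-\Delta v+v=u$ against $v$ yields $\int_\Omega uv\,dx=\|v\|_{H^1(\Omega)}^2$, and that ``$v=(I-\Delta)^{-1}u$ up to boundary corrections.'' Both statements are false in the present \emph{fully parabolic} system: the second equation is $v_t-\Delta v+v=u$, so testing against $v$ gives $\int_\Omega uv\,dx=\|v\|_{H^1(\Omega)}^2+\int_\Omega v_t v\,dx$, with an uncontrolled $v_t$ contribution, and it is the auxiliary function $w$, not $v$, that equals $(I-\Delta)^{-1}u$. These identities hold only in the simplified parabolic--elliptic version. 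The argument of Nagai--Senba--Yoshida (which is precisely what Lemma \ref{ublm1} rests on) is tailored to the fully parabolic case and proceeds differently: it first applies Jensen's inequality to estimate $\int_\Omega uv\,dx$ by $\int_\Omega u\log u\,dx+\Lambda\log\int_\Omega e^v\,dx+C$, and then controls $\log\int_\Omega e^v\,dx$ via the Trudinger--Moser inequality in terms of $\|\nabla v\|_{L^2(\Omega)}^2$, so that the lower bound on $\mathcal{F}$ follows without ever eliminating $u$ by an elliptic identity. Since you ultimately cite the lemma rather than reprove it, your proof is fine; but a reader reconstructing the argument from your sketch would be led astray on this point.
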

Next, we aim to derive a time-independent upper bound of $v$ with subcritical mass. For this purpose, we need the following uniform-in-time estimates.
\begin{lemma}\label{ublm2}If $\Lambda<\Lambda_c$, then there holds
	\begin{equation*}
	\sup\limits_{t\geq0}\int_t^{t+1}\int_\Omega  e^{-v(s)}u^2(s)dxds\leq C,
	\end{equation*}where $C>0$ depends on $\Omega$ and the initial data only.
\end{lemma}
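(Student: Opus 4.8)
The plan is to integrate the dissipation term in the Lyapunov identity \eqref{Lyapunov} over a time interval of unit length and to convert the resulting control of $\int_\Omega u e^{-v}|\nabla\log u - \nabla v|^2$ into control of $\int_\Omega e^{-v}u^2$. First I would integrate \eqref{Lyapunov} from $t$ to $t+1$; since by Lemma \ref{ublm1} the functional $\mathcal{F}(u,v)$ is bounded uniformly in time, the telescoping term $\mathcal{F}(u(t),v(t)) - \mathcal{F}(u(t+1),v(t+1))$ is bounded by a constant $C$ independent of $t$, giving
\begin{equation*}
\int_t^{t+1}\!\!\int_\Omega u e^{-v}\left|\nabla\log u - \nabla v\right|^2 dx\, ds + \int_t^{t+1}\|v_t\|_{L^2(\Omega)}^2\, ds \leq C \qquad \text{for all } t\geq 0.
\end{equation*}
Both integrands being non-negative, each is separately bounded by $C$.

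The next step is to relate $\int_\Omega e^{-v}u^2$ to these quantities. Writing $u e^{-v}|\nabla\log u - \nabla v|^2 = 4 e^{-v}|\nabla\sqrt{u} - \tfrac12\sqrt{u}\,\nabla v|^2 = 4 e^{-v}|\nabla(\sqrt u) - \sqrt u\,\nabla(\tfrac v2)|^2$, one recognizes $e^{-v/2}(\nabla(\sqrt u) - \sqrt u \nabla(v/2)) = \nabla(\sqrt u\, e^{-v/2})$, so the dissipation equals $4\int_\Omega |\nabla(\sqrt u\, e^{-v/2})|^2$. Setting $z \triangleq \sqrt u\, e^{-v/2}$, we therefore control $\int_t^{t+1}\|\nabla z\|_{L^2(\Omega)}^2\,ds$; and since $\int_\Omega z^2 = \int_\Omega u e^{-v} \leq \int_\Omega u = \Lambda$ (using $v\geq0$), we also control $\int_t^{t+1}\|z\|_{H^1(\Omega)}^2\, ds$. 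In two dimensions, by the Gagliardo–Nirenberg / Sobolev inequality $\|z\|_{L^4(\Omega)}^4 \leq C\|z\|_{H^1(\Omega)}^2\|z\|_{L^2(\Omega)}^2$, which combined with the uniform $L^2$ bound on $z$ gives $\int_t^{t+1}\|z\|_{L^4(\Omega)}^4\, ds \leq C$. But $\|z\|_{L^4(\Omega)}^4 = \int_\Omega u^2 e^{-2v} \geq \int_\Omega u^2 e^{-v}\cdot e^{-v^*}$ is not quite what I want since $v$ is not yet known to be bounded. Instead I should observe directly $\int_\Omega e^{-v}u^2 = \int_\Omega z^2 u = \int_\Omega z^2 \cdot z^2 e^{v} $, which is circular; the clean route is rather $\int_\Omega e^{-v}u^2 \leq \int_\Omega e^{-v/2} u^2 \cdot$ — again messy. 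The correct and simplest identity is $\int_\Omega e^{-v}u^2 = \int_\Omega (z^2)(u) $, and using $u = z^2 e^{v}$ is unhelpful; so instead I bound $\int_\Omega e^{-v} u^2 = \int_\Omega z^4 e^{v} \le$ nothing good. The resolution: use that $\int_\Omega z^4 = \int_\Omega u^2 e^{-2v} \le \int_\Omega u^2 e^{-v}$ because $e^{-2v}\le e^{-v}$ for $v\ge 0$ — this is the wrong direction. I therefore expect the genuinely useful estimate to be the $L^4$ bound on $z$ interpreted as $\int_t^{t+1}\int_\Omega u^2 e^{-2v}\,dx\,ds\le C$, and then to upgrade $e^{-2v}$ to $e^{-v}$ one uses the pointwise upper bound on $v$ already available: by Lemma \ref{vbd} (here with $\gamma(v)=e^{-v}$) together with the uniform bound $\|w\|_{L^\infty}\le C$ which follows from Lemma \ref{lm2} since $1/\gamma = e^{v}$ satisfies $(\mathrm{A2})$ — wait, $e^v$ is super-polynomial, so one must instead invoke Lemma \ref{ublm1}'s $H^1$ bound on $v$ to get $v\le v^*$ uniformly via the Trudinger–Moser / elliptic regularity bootstrap, which is the content of the next lemma in the paper.

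Given that circularity, the cleanest plan is: keep the estimate in the form $\int_t^{t+1}\int_\Omega u^2 e^{-2v}\le C$ if that already suffices downstream, or — more likely matching the stated conclusion — note $e^{-v}u^2 = (u e^{-v})\cdot u = w\text{-type}$; concretely, $\int_\Omega e^{-v}u^2 \le \big(\int_\Omega e^{-2v}u^2\big)^{1/2}\big(\int_\Omega u^2\big)^{1/2}$ is false in general without an $L^2(u)$ bound, so instead write $\int_\Omega e^{-v}u^2 = \int_\Omega z^2\cdot u$ and apply Hölder with the $L^2$-in-$x$ bound on $z^2 = \|z\|_{L^4}^2$ against $\|u\|_{L^2}$ — still needs $L^2(u)$. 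The honest assessment is that the \textbf{main obstacle} is precisely closing this loop: one wants $\int\int e^{-v}u^2$ but the dissipation naturally gives $\int\int e^{-2v}u^2 = \|z\|_{L^4}^4$. Since $v\ge 0$ gives $e^{-2v}\le e^{-v}$, that inequality goes the wrong way. The fix I would use is to run the whole argument with $z = u^{1/2}e^{-v/2}$ but bound the target as $\int_\Omega e^{-v}u^2 \le \|e^{-v/2}u\|_{L^2}\cdot\|e^{-v/2}u\|_{L^2}$ trivially, i.e.\ $\int_\Omega e^{-v}u^2 = \|z\|_{L^4}^4\cdot$ — no. The genuinely correct observation, which I would carry out carefully, is that $e^{-v}u^2 = (u^{1/2}e^{-v/2})^2\cdot(u e^{0}) $ is not it either; rather $e^{-v}u^2 = (u e^{-v})^{} u = $, and since $ue^{-v}=z^2$, we get $e^{-v}u^2 = z^2 u \le z^2\cdot(z^2 e^{v}) $. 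I would instead simply replace the factor $e^{-v}$ in the statement's integrand by noting it is what the Lyapunov dissipation controls after writing $ue^{-v}|\nabla\log u-\nabla v|^2 = 4|\nabla z|^2$ and applying the 2D inequality $\|z\|_{L^4}^4\le C\|\nabla z\|_{L^2}^2\|z\|_{L^2}^2 + C\|z\|_{L^2}^4$, together with $\|z\|_{L^2}^2=\int u e^{-v}\le\Lambda$, to get $\int_t^{t+1}\int_\Omega u^2 e^{-2v}\le C$; and then, since $v\ge 0$ implies $u^2 e^{-v}\le u^2\cdot e^{v}\cdot e^{-2v} = u^2 e^{-2v}e^{v}$ — still circular. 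I therefore conclude the intended proof almost surely proves the displayed bound with $e^{-2v}$ in place of $e^{-v}$, or uses an already-established uniform upper bound $v\le v^*$ from an earlier step; I would follow whichever the paper's Lemma ordering permits, presenting the Lyapunov-integration step, the substitution $z=u^{1/2}e^{-v/2}$, the 2D Gagliardo–Nirenberg estimate, and the mass bound $\int u e^{-v}\le\Lambda$, and flag the comparison of exponentials $e^{-2v}$ versus $e^{-v}$ as the point requiring the pointwise bound on $v$.
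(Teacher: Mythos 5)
The plan you pursued does not close, and you yourself flag exactly where: integrating the Lyapunov identity \eqref{Lyapunov} controls $\int_t^{t+1}\|\nabla z\|_{L^2}^2$ with $z=u^{1/2}e^{-v/2}$, and the two-dimensional Gagliardo--Nirenberg inequality combined with $\|z\|_{L^2}^2=\int_\Omega ue^{-v}\le\Lambda$ yields $\int_t^{t+1}\int_\Omega u^2e^{-2v}\le C$, which carries $e^{-2v}$ rather than $e^{-v}$. Because $v\ge 0$, the inequality $e^{-2v}\le e^{-v}$ points the \emph{wrong} way, and there is no legitimate way to upgrade it here. Your fallback --- invoking a pointwise uniform upper bound $v\le v^*$ --- is circular: in the paper that bound is Lemma \ref{ublm3}, which is proved \emph{after} and \emph{from} the present Lemma \ref{ublm2}. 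So the proposal as written has a genuine gap, and the route through the Lyapunov dissipation does not deliver the stated weight.

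The paper's actual argument is completely different and avoids the dissipation term altogether. One tests the first equation $u_t=\Delta(ue^{-v})$ against the auxiliary variable $w=(I-\Delta)^{-1}[u]$ and uses $w-\Delta w=u$ to obtain
\begin{equation*}
\frac12\frac{d}{dt}\big(\|\nabla w\|_{L^2(\Omega)}^2+\|w\|_{L^2(\Omega)}^2\big)+\int_\Omega e^{-v}u^2\,dx=\int_\Omega e^{-v}uw\,dx\le\frac12\int_\Omega e^{-v}u^2\,dx+\frac12\int_\Omega e^{-v}w^2\,dx,
\end{equation*}
with $\int_\Omega e^{-v}w^2\le\int_\Omega w^2\le C\Lambda^2$ by Lemma \ref{lm2}. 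To close the ODE, one writes $\|w\|_{H^1}^2=\int_\Omega uw\le\int_\Omega e^{-v}u^2+\int_\Omega e^{v}w^2$ and bounds $\int_\Omega e^{v}w^2\le(\int_\Omega e^{2v})^{1/2}(\int_\Omega w^4)^{1/2}\le C$ using H\"older, Lemma \ref{lm2} for $w^4$, and the Trudinger--Moser inequality together with the uniform $H^1$ bound on $v$ from Lemma \ref{ublm1}, which is precisely where $\Lambda<\Lambda_c$ enters. This gives a differential inequality with dissipation $\int_\Omega e^{-v}u^2$ and a damping term in $\|w\|_{H^1}^2$, from which both $\sup_t\|w\|_{H^1}\le C$ and the desired time-averaged bound follow. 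Note that the weight $e^{-v}$ arises naturally on the left from $\int_\Omega e^{-v}u\,\Delta w\,dx$ with $\Delta w=w-u$, so no adjustment of exponents is needed --- exactly the feature your approach lacks. (If you wanted to salvage your route, the weaker bound $\int_t^{t+1}\int_\Omega u^2e^{-2v}\le C$ that you do obtain might still suffice for the downstream Lemma \ref{ublm3} after adjusting the H\"older exponents there, but that is a different lemma than the one stated and is not what the paper proves.)
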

\begin{proof}
	Multiplying the first equation of \eqref{chemo2} by $w$ and integrating over $\Omega$, we obtain that
	\begin{equation*}
	\int_\Omega u_t wdx=\int_\Omega e^{-v}u\Delta w dx.
	\end{equation*}
	Recalling that $w-\Delta w=u$, the above equality implies that
	\begin{align*}
	\int_\Omega (-\Delta w_t+w_t)w dx+\int_\Omega e^{-v}u^2dx=\int_\Omega e^{-v}uwdx.
	\end{align*}
	Hence, we have
	\begin{equation}\label{unb0}
	\frac12\frac{d}{dt}(\|\nabla w\|_{L^2(\Omega)}^2+\|w\|_{L^2(\Omega)}^2)+\int_\Omega e^{-v}u^2dx=\int_\Omega e^{-v}uwdx\leq \frac12\int_\Omega e^{-v}u^2dx+\frac12\int_\Omega e^{-v}w^2dx.
	\end{equation}
In view of Lemma \ref{lm2}, we observe that
\begin{equation}
	\int_\Omega e^{-v}w^2dx\leq \int_\Omega w^2dx\leq C\|u\|^2_{L^1(\Omega)}=C\Lambda^2.
\end{equation}
On the other hand, by integration by parts and Young's inequality, we infer that
\begin{align*}
\|\nabla w\|_{L^2(\Omega)}^2+\|w\|_{L^2(\Omega)}^2=&\int_\Omega w udx\non\\
\leq&\int_\Omega e^{-v}u^2dx+\int_\Omega e^{v}w^2dx.
\end{align*}
Thanks to H\"older's inequality  and Lemma \ref{lm2}, we infer that
\begin{equation*}
	\int_\Omega e^{v}w^2dx\leq\left(\int_\Omega e^{2v}dx\right)^{1/2}\left(\int_\Omega w^4dx\right)^{1/2}\leq C
\end{equation*}
with $C>0$ depending only on the initial data and $\Omega$, where we also used the 2D Trudinger-Moser inequality \cite[Theorem 2.2]{Nagai97} to infer that \begin{align*}\non
\int_\Omega e^{2v}dx\leq Ce^{C(\|\nabla v\|_{L^2(\Omega)}^2+\|v\|_{L^2(\Omega)}^2)}
\end{align*}with $C>0$ depending only on $\Omega.$
Therefore, we deduce from above that
\begin{equation}\label{unb0a}
		\frac{d}{dt}(\|\nabla w\|_{L^2(\Omega)}^2+\|w\|_{L^2(\Omega)}^2)+\frac12\int_\Omega e^{-v}u^2dx+\frac12(\|\nabla w\|_{L^2(\Omega)}^2+\|w\|_{L^2(\Omega)}^2)\leq C.
\end{equation}
Then we may apply the ODE technique to conclude that
\begin{equation*}
	\sup\limits_{t\geq0}(\|\nabla w\|_{L^2(\Omega)}^2+\|w\|_{L^2(\Omega)}^2)\leq C
\end{equation*}with $C>0$ depending only on the initial data and $\Omega$.
Moreover,   an integration of \eqref{unb0a} with respect to time from $t$ to $t+1$ together with the fact $\sup\limits_{t\geq0}\|w\|_{H^1}\leq C$ will finally yields to our assertion. This completes the proof.
\end{proof}

\begin{remark}\label{remv}If $w$ or $v$ has a uniform-in-time upper bound, then one  has
\begin{equation*}
\sup\limits_{t\geq0}\left(\|v\|_{H^1(\Omega)}+\int_\Omega uv dx+|\mathcal{F}(u(t),v(t))|+\int_t^{t+1}\int_\Omega  e^{-v(s)}u^2(s)dxds\right)\leq C,
\end{equation*}where $C>0$ depends on $\Omega$ and the initial data only.
\end{remark}
\begin{proof}
	If $w$ is uniformly-in-time bounded, then it follows from Lemma \ref{vbd} that $\sup\limits_{t\geq0}\|v(t,\cdot)\|_{L^\infty(\Omega)}\leq C$ by some $C>0$ independent of $t$. As a result, we infer that
	\begin{equation}
		\begin{split}
		\int_\Omega \left(u\log u+\frac{1}{2}|\nabla v|^2+\frac12v^2 \right)dx=&\mathcal{F}(u,v)+\int_\Omega u vdx\\
		\leq&\mathcal{F}(u,v)+\|v\|_{L^\infty(\Omega)}\int_\Omega udx\\
		\leq&\mathcal{F}(u_0,v_0)+C\Lambda,
		\end{split}
	\end{equation}which indicates that
	\begin{equation}
		\sup\limits_{t\geq0}\left(\|v\|_{H^1(\Omega)}+\int_\Omega uv dx+|\mathcal{F}(u(t),v(t))|\right)\leq C.
	\end{equation}
Then we may concludes the proof in the same manner as in Lemma \ref{ublm2}.
\end{proof}

\begin{lemma}\label{ublm3}
	If $\Lambda<\Lambda_c$, then there exists $C>0$ depending on $\Omega$ and the initial data such that for all $x\in\Omega$
	\begin{equation*}
	\sup\limits_{t\geq0}	v(x,t)\leq C.
	\end{equation*}	
\end{lemma}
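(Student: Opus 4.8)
The plan is to reduce the statement to a uniform-in-time $L^\infty$ bound on the auxiliary function $w=(I-\Delta)^{-1}[u]$: once $\sup_{t\ge0}\|w(\cdot,t)\|_{L^\infty(\Omega)}\le C$ is available, Lemma~\ref{vbd} --- which applies here since $\gamma(a)=e^{-a}\in(0,1)$ for any choice $a>0$ --- immediately gives $\sup_{t\ge0}\|v(\cdot,t)\|_{L^\infty(\Omega)}\le C$. To bound $w$ I would follow the scheme of Lemma~\ref{lm41}. By \eqref{var0b} one has, pointwise, $w_t\le w_t+\gamma(v)u\le\gamma(v_*)w$, so it suffices to control the time average $\sup_{t\ge0}\int_t^{t+1}\|w(\cdot,s)\|_{L^\infty(\Omega)}\,ds$ and then feed this into the uniform Gronwall inequality (Lemma~\ref{uniformGronwall}), applied for each fixed $x\in\Omega$ to $y(s)=w(x,s)$ with $g\equiv\gamma(v_*)$ and $h\equiv0$; this produces a time-independent bound for $t\ge1$, and on $[0,1]$ the crude estimate $w(x,t)\le w_0(x)e^{\gamma(v_*)t}$ from Lemma~\ref{keylem1} takes over.

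The heart of the argument is therefore the time-averaged estimate on $\|w\|_{L^\infty}$, and this is where the sub-critical mass assumption $\Lambda<\Lambda_c$ enters. Since $n=2$, elliptic regularity for the Helmholtz equation together with the embedding $W^{2,p}(\Omega)\hookrightarrow C^0(\overline\Omega)$ for $p>1$ gives $\|w\|_{L^\infty(\Omega)}\le C\|u\|_{L^p(\Omega)}$ for a fixed $p\in(1,2)$. To bound $\|u\|_{L^p}$ using only the weighted dissipation $\io e^{-v}u^2\,dx$, I would split $u^p=(e^{-v}u^2)^{p/2}(e^{v})^{p/2}$ and apply H\"older's inequality with exponents $\frac{2}{p}$ and $\frac{2}{2-p}$:
\[
\io u^p\,dx\le\Big(\io e^{-v}u^2\,dx\Big)^{p/2}\Big(\io e^{\frac{pv}{2-p}}\,dx\Big)^{\frac{2-p}{2}}.
\]
By Lemma~\ref{ublm1} we have $\sup_{t\ge0}\|v(\cdot,t)\|_{H^1(\Omega)}\le C$, so the $2$D Trudinger--Moser inequality \cite[Theorem~2.2]{Nagai97} bounds $\io e^{\frac{pv}{2-p}}\,dx$ by a constant depending only on $\Omega$ and the initial data. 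Hence $\|u\|_{L^p(\Omega)}\le C\big(\io e^{-v}u^2\,dx\big)^{1/2}$, and so $\|w\|_{L^\infty(\Omega)}\le C\big(\io e^{-v}u^2\,dx\big)^{1/2}$.

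It then remains to integrate this over a unit time interval and apply Cauchy--Schwarz in time,
\[
\int_t^{t+1}\|w(\cdot,s)\|_{L^\infty(\Omega)}\,ds\le C\Big(\int_t^{t+1}\io e^{-v(s)}u^2(s)\,dx\,ds\Big)^{1/2},
\]
whose right-hand side is bounded uniformly in $t$ by Lemma~\ref{ublm2}. Combined with the uniform Gronwall step this yields $\sup_{t\ge0}\|w(\cdot,t)\|_{L^\infty(\Omega)}\le C$ and hence the claim. I expect the main obstacle to be the middle step: mass conservation alone only provides $u\in L^1(\Omega)$, and the proof must extract genuine $L^p$ control with $p>1$ out of the single weighted quantity $\io e^{-v}u^2\,dx$. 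This is exactly what the sub-criticality buys --- it keeps $\|v\|_{H^1(\Omega)}$, and therefore $\io e^{\alpha v}\,dx$ for every $\alpha>0$, bounded, which is what closes the H\"older splitting; in the super-critical regime this mechanism fails, in line with the infinite-time blowup asserted in Theorem~\ref{TH4}.
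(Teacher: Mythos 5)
Your proposal is correct and follows essentially the same route as the paper's proof: elliptic regularity gives $\|w\|_{L^\infty}\le C\|u\|_{L^p}$ for some $p\in(1,2)$, the H\"older split of $u^p$ against the weight $e^{-v}u^2$ together with the Trudinger--Moser bound on $\int_\Omega e^{\alpha v}\,dx$ (available because $\|v\|_{H^1}$ is uniformly bounded by Lemma~\ref{ublm1}) gives $\|w\|_{L^\infty}\le C\big(\int_\Omega e^{-v}u^2\,dx\big)^{1/2}$, Lemma~\ref{ublm2} controls the time average, and the uniform Gronwall inequality applied to the key identity \eqref{var0} (plus Lemma~\ref{keylem1} on $[0,1]$ and Lemma~\ref{vbd} at the end) closes the argument. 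The only cosmetic difference is that the paper fixes $p=3/2$ while you keep a general $p\in(1,2)$.
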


\begin{proof}
	First, we apply the Sobolev embedding theorem, the elliptic regularity theorem and H\"older's inequality to infer that
	\begin{align*}
	\|w\|_{L^\infty(\Omega)}\leq &C\|w\|_{W^{2,\frac32}(\Omega)}\non\\
	\leq&C\|u\|_{L^{\frac32}(\Omega)}\non\\
	=&C\left(\int_\Omega u^{\frac32}dx\right)^{\frac23}\non\\
	\leq&C\left(\int_\Omega u^2 e^{-v}dx\right)^{\frac12}\left(\int_\Omega e^{3v}dx\right)^{\frac16}\non\\
	\leq&C\left(\int_\Omega u^2 e^{-v}dx\right)^{1/2},
	\end{align*}
	where we used the 2D Trudinger-Moser inequality \cite[Theorem 2.2]{Nagai97} to deduce that
	\begin{align*}\non
	\int_\Omega e^{3v}dx\leq Ce^{C(\|\nabla v\|_{L^2(\Omega)}^2+\|v\|_{L^2(\Omega)}^2)}
	\end{align*}with $C>0$ depending only on $\Omega.$
	Thus, by Lemma \ref{ublm2}, for any $t\geq0$, there holds
	\begin{equation*}
	\int_t^{t+1}\|w\|_{L^\infty(\Omega)}^2ds\leq C\int_{t}^{t+1}\int_\Omega u^2 e^{-v}dxds\leq C,
	\end{equation*}which due to Young's inequality indicates that
	\begin{equation*}\non
	\int_t^{t+1}\|w\|_{L^\infty(\Omega)}ds\leq\int_t^{t+1}\|w\|^2_{L^\infty(\Omega)}+C\leq C.
	\end{equation*}
	Hence, for any $x\in\Omega$ and $t\geq0$, we obtain that
	\begin{equation}\label{unint00}
	\int_t^{t+1}w(x,s)ds\leq\int_t^{t+1}\|w\|_{L^\infty(\Omega)}ds\leq C.
	\end{equation}
	Observing that 
	\begin{equation}\non
	w_t+ue^{-v}=(I-\Delta)^{-1}[ue^{-v}]\leq (I-\Delta)^{-1}[u]=w,
	\end{equation}
	we may fix $x\in\Omega$ and apply the uniform Gronwall inequality Lemma \ref{uniformGronwall} to deduce that
	\begin{equation}\non
	w(x,t)\leq C\;\;\text{for all}\;t\geq1.
	\end{equation}
	Since  $C>0$ above is independent of $x$ and
	\begin{equation}\non
	w(x,t)\leq w_0(x)e^{e^{-v_*}}\leq ew_0(x)\quad\text{for any}\;x\in\Omega\;\text{and}\;t\in[0,1]
	\end{equation}
	due to Lemma \ref{keylem1}, we conclude that
	\begin{equation}\non
	\sup\limits_{t\geq0} w(x,t)\leq C.
	\end{equation}
	As a result, $v$ is uniformly-in-time bounded as well according to Lemma \ref{vbd}.
	This completes the proof.
\end{proof} 
\noindent\textbf{Proof of Proposition \ref{prop1}.} 
 Proceeding along the same lines in \cite{TaoWin17}, we can invoke  the time-independent upper bound of $v$ to show the uniform-in-time boundedness of the classical solutions, which concludes the proof.
\qed
\begin{remark}
In view of Remark \ref{remv}, if $u$ blows up at time infinity, then $\|v\|_{L^\infty(\Omega)}$ and $\|w\|_{L^\infty(\Omega)}$ cannot be uniformly-in-time bounded and thus we have
\begin{equation}
\limsup\limits_{t\nearrow +\infty}\|(I-\Delta)^{-1}[u](\cdot,t)\|_{L^\infty(\Omega)}=\limsup\limits_{t\nearrow +\infty}\|v(\cdot,t)\|_{L^\infty(\Omega)}=+\infty.
\end{equation}
\end{remark}

\subsection{Unboundedness with Super-ciritical Mass}\label{section_unboundedness}
In this part we construct blowup solutions in infinite time. 
Since the system \eqref{chemo2} has the similar energy structure and the same stationary problem as the Keller-Segel system, 
we may verify existence of blowup solutions following the idea in \cite{ssMAA2001,HW01}. 

Stationary solutions $(u,v)$ to \eqref{chemo2} satisfy that
\begin{align*}
\begin{cases}
0=\nabla \cdot ue^{-v} \nabla \left(\log u - v \right)
&\mathrm{in}\ \Omega, \\[1mm]
0=\Delta v -v+u
&\mathrm{in}\ \Omega, \\
u > 0, \ v > 0& \mathrm{in}\ \Omega, \\ 
\displaystyle \frac{\partial u}{\partial \nu} =\frac{\partial v}{\partial \nu}  =0 & \mathrm{on}\ \partial \Omega.
\end{cases}
\end{align*}
Put $\Lambda = \|u\|_{L^1(\Omega)} \in (0,\infty)$. 
In view of the mass conservation and the boundary condition, the set of equilibria consists of solution to the following problem:
\begin{align} \label{eqn:biharmoniceqn}
\begin{cases}
\displaystyle	v-\Delta v = \frac{\Lambda}{\int_\Omega e^{v}}
e^{ v} & \mathrm{in}\ \Omega, \\[1mm]
\displaystyle u = \frac{\Lambda}{\int_\Omega e^{ v}}e^{ v}
& \mathrm{in}\ \Omega,  \\[1mm]
\displaystyle \frac{\partial v}{\partial \nu}  =0 & \mathrm{on}\ \partial \Omega. 
\end{cases}
\end{align}
Proceeding the same way as in \cite[Lemma 3.1]{win_JDE}, 
we have the following result.
\begin{proposition}
	\label{prop:conv-solu-subseq}
	Let $(u,v)$ be a classical non-negative  solution to \eqref{chemo2} in
	$\Omega \times (0,\infty)$. If the solution is uniformly-in-time bounded, 
	there exist a sequence of time $\{t_k\} \subset
	(0,\infty)$ and a  solution $(u_s,v_s)$ to \eqref{eqn:biharmoniceqn} such that 
	$\lim_{k \rightarrow \infty} t_k = \infty$ and that 
	\[
	\lim_{k \rightarrow \infty} (u(t_k),v(t_k)) = (u_s,v_s) \quad \mbox{{\rm in} }C^2(\overline{\Omega}).
	\]
	as well as
	$$
	\mathcal{F}(u_s,v_s) \leq \mathcal{F}(u_0,v_0).
	$$
\end{proposition}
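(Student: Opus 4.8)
The plan is to adapt the classical dissipative-dynamical-systems argument (as in Nagai--Senba--Yoshida, Horstmann--Wang, and specifically the form in \cite{win_JDE}, Lemma 3.1) to the present weighted setting. The starting point is the Lyapunov identity \eqref{Lyapunov}: since $\mathcal{F}(u(t),v(t))$ is non-increasing in time and, by the uniform-in-time boundedness assumption together with Remark \ref{remv} (or directly from $\|u\|_\infty+\|v\|_\infty\le C$), bounded from below, the limit $\mathcal{F}_\infty := \lim_{t\to\infty}\mathcal{F}(u(t),v(t))$ exists and is finite, and $\mathcal{F}_\infty \le \mathcal{F}(u_0,v_0)$. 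Integrating \eqref{Lyapunov} over $(0,\infty)$ gives
\begin{equation*}
\int_0^\infty\!\!\int_\Omega u e^{-v}\,|\nabla\log u-\nabla v|^2\,dx\,dt + \int_0^\infty\!\|v_t\|_{L^2(\Omega)}^2\,dt = \mathcal{F}(u_0,v_0)-\mathcal{F}_\infty < \infty.
\end{equation*}
Consequently one can extract a sequence $t_k\to\infty$ along which both dissipation terms evaluated at $t_k$ tend to zero; more precisely, using the finiteness of the time integral one finds $t_k\to\infty$ with $\int_{t_k}^{t_k+1}\big(\int_\Omega u e^{-v}|\nabla\log u-\nabla v|^2 + \|v_t\|_{L^2}^2\big)\,dt \to 0$.

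Next I would upgrade the uniform $L^\infty$ bounds to uniform bounds in a H\"older space so as to obtain compactness. Since $v$ is bounded, $e^{-v}$ is bounded above and below, so the first equation of \eqref{chemo2} is a uniformly parabolic equation for $u$ with bounded coefficients; parabolic Schauder / De Giorgi--Nash--Moser estimates on the shifted solutions $u^k(\cdot,t):=u(\cdot,t+t_k)$, $v^k(\cdot,t):=v(\cdot,t+t_k)$ on $\overline\Omega\times[0,2]$ yield uniform $C^{2+\theta,1+\theta/2}$ bounds (here one also uses parabolic regularity for the second equation with the bounded right-hand side $u$, then feeds the resulting regularity of $v$ back into the first equation in bootstrap fashion). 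By Arzel\`a--Ascoli, along a further subsequence $(u^k,v^k)\to(\bar u,\bar v)$ in $C^2(\overline\Omega)\times C^2(\overline\Omega)$ uniformly on $[0,2]$, say at $t=1$ set $(u_s,v_s):=(\bar u(\cdot,1),\bar v(\cdot,1))$. The convergence $\|v_t\|_{L^2}\to 0$ in the time-integrated sense, combined with the regularity, forces the limit $\bar v$ to be time-independent, and then the limiting equations read $0=\Delta(u_s e^{-v_s})$ with $\partial_\nu(u_s e^{-v_s})=0$, hence $u_s e^{-v_s}\equiv\text{const}$, i.e. $u_s=\lambda e^{v_s}$; together with $-\Delta v_s+v_s=u_s$ and mass conservation $\int_\Omega u_s=\Lambda$ this is exactly \eqref{eqn:biharmoniceqn}. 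Also $u_s>0$ follows since $u_s=\lambda e^{v_s}$ and $\lambda=\Lambda/\int_\Omega e^{v_s}>0$.

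Finally, $\mathcal{F}(u_s,v_s)=\lim_k\mathcal{F}(u(t_k),v(t_k))=\mathcal{F}_\infty\le\mathcal{F}(u_0,v_0)$ by continuity of $\mathcal{F}$ under $C^2$ (indeed already $H^1\times L\log L$) convergence, which gives the last assertion. The main obstacle I anticipate is not any single estimate but the bootstrap for \emph{uniform} (in $k$, i.e. in the starting time) higher regularity: one must be careful that all the parabolic regularity constants depend only on the a priori $L^\infty$ bounds of $u,v$ and on $\Omega$, not on $t_k$, which is where the time-independence of the uniform-boundedness hypothesis is essential; once that is in place, extracting the stationary limit and identifying it with \eqref{eqn:biharmoniceqn} is routine, and this is precisely why the proof can be carried out ``the same way as in \cite[Lemma 3.1]{win_JDE}.''
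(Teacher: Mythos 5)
Your proof is correct and follows the same strategy the paper invokes by its citation of Winkler's Lemma 3.1 in \cite{win_JDE}: exploit the Lyapunov functional being non-increasing and bounded below (here using the uniform $L^\infty$ bounds), integrate \eqref{Lyapunov} to get summable dissipation, bootstrap the uniform $L^\infty$ bounds through parabolic regularity to uniform parabolic H\"older bounds on the time-shifted solutions, extract a $C^2$-convergent subsequence by Arzel\`a--Ascoli, and identify the limit as a solution of \eqref{eqn:biharmoniceqn} from the vanishing dissipation terms. The only point worth tightening is the step ``then the limiting equations read $0=\Delta(u_s e^{-v_s})$'': it is cleanest to note that $\bar v_t\equiv 0$ together with $-\Delta\bar v+\bar v=\bar u$ forces $\bar u$ to be time-independent as well, whence $\bar u_t=0$ in the limit of the first equation gives $\Delta(\bar u e^{-\bar v})=0$; alternatively one can pass to the limit directly in the first dissipation term, which equals $\int_\Omega e^{v}u^{-1}|\nabla(ue^{-v})|^2$.
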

\begin{remark}
Observe that for any solution $(u_s,v_s)$ to \eqref{eqn:biharmoniceqn}, $u_s$ is strictly positive on $\overline{\Omega}$ (see, e.g., \cite[Sect. 2]{FLP07}).  Assume for any $j\geq1$, there is $t_j>0$ and $x_j\in\Omega$ such that $u(t_j,x_j)<1/j$. Then by a similar compactness argument as in  \cite[Lemma 3.1]{win_JDE}, one may extract a time subsequence, still denoted by $t_j$, such that $u(t_j)$ converges to some $u_s$ in $C^2(\overline{\Omega})$, which leads to a contradiction since $u_s$ is strictly positive. Thus, we infer that for any uniformly-in-time bounded solution $(u,v)$, $u$ is strictly positive for $(t_0,+\infty)\times\overline{\Omega}$ with some sufficiently large $t_0$ and we can now apply the non-smooth Lojasiewicz--Simon inequality established in \cite{FLP07} (see, also \cite{JZ09,J18}) to deduce that  
	\[
\lim_{t \rightarrow+ \infty} (u(t),v(t)) = (u_s,v_s) \quad \mbox{{\rm in} }C^2(\overline{\Omega}).
\]
\end{remark}

For $\Lambda >0$ put
\[
\mathcal{S}(\Lambda) \triangleq \left\{ (u,v) \in C^2(\overline{\Omega}) :
(u,v ) \mbox{ is a solution to \eqref{eqn:biharmoniceqn}
	 } \right\}.
\]
Here we recall the quantization property of solutions to  \eqref{eqn:biharmoniceqn}.  
By \cite[Theorem 1]{ssAMSA2000} for $\Lambda \not\in 4\pi \mathbb{N}$  there exists some $C>0$ such that
\[
\sup \{ \|(u,v)\|_{L^\infty (\Omega)} : (u,v) \in
\mathcal{S}(\Lambda) \} \leq C
\]
and
\[
F_\ast(\Lambda) := \inf \{ \mathcal{F}(u,v) : (u,v) \in
\mathcal{S}(\Lambda) \} \geq - C.
\] 
Thus by taking account of Lemma \ref{prop:conv-solu-subseq}, 
for a pair of functions $(u_0,v_0)$ satisfying 
\begin{align*}
\begin{cases}
\|u_0\|_{L^1(\Omega)} = \Lambda\not\in 4\pi \mathbb{N},\\[2pt]
\mathcal{F}(u_0,v_0) <F_\ast(\Lambda),
\end{cases}
\end{align*}
 the corresponding global solution must blow up in infinite time. 

From now on we will construct an example satisfying the above condition based on calculations in \cite{fs5}. 
A straightforward calculation leads us to the following lemma. 
\begin{lemma}\label{lemm:sol-fund-sys}
	For any $\lambda >0$ the following functions
	\[
	u_\lambda (x) := 
	\frac{8\lambda^2}{(1+\lambda^2|x|^2)^2}, \quad 
	v_\lambda (x) := 
	2 \log \frac{\lambda}{1+\lambda^2|x|^2} +
	\log 8\qquad \mbox{for all } x \in \R^2, 
	\]
	satisfy 
	\[
	e^{v_\lambda} = u_\lambda,\quad 0=\Delta v_\lambda+u_\lambda, \quad
	\int_{\R^2} u_\lambda = 8 \pi. 
	\]
\end{lemma}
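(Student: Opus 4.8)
The statement to prove is Lemma 6.6 (\texttt{lemm:sol-fund-sys}): the explicit functions $u_\lambda$ and $v_\lambda$ satisfy $e^{v_\lambda}=u_\lambda$, $\Delta v_\lambda + u_\lambda = 0$, and $\int_{\R^2} u_\lambda = 8\pi$. This is a direct verification, so the plan is simply to carry out the three computations in turn.

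\textbf{Step 1: the algebraic identity $e^{v_\lambda}=u_\lambda$.} This is immediate: exponentiating $v_\lambda(x)=2\log\frac{\lambda}{1+\lambda^2|x|^2}+\log 8$ gives $e^{v_\lambda}=8\cdot\frac{\lambda^2}{(1+\lambda^2|x|^2)^2}=u_\lambda$. No obstacle here.

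\textbf{Step 2: the Liouville-type equation $\Delta v_\lambda = -u_\lambda$.} I would write $r=|x|$ and $v_\lambda = 2\log\lambda + \log 8 - 2\log(1+\lambda^2 r^2)$, so that $\Delta v_\lambda = -2\,\Delta\log(1+\lambda^2 r^2)$. Using the radial Laplacian $\Delta f = f_{rr} + \frac{1}{r}f_r$ for $f(r)=\log(1+\lambda^2 r^2)$, one computes $f_r = \frac{2\lambda^2 r}{1+\lambda^2 r^2}$ and then $f_{rr}+\frac1r f_r = \frac{4\lambda^2}{(1+\lambda^2 r^2)^2}$ after combining over a common denominator. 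Hence $\Delta v_\lambda = -\frac{8\lambda^2}{(1+\lambda^2 r^2)^2} = -u_\lambda$, which is exactly the claim. Equivalently, one may invoke the classical fact that $\log(1+|y|^2)$ solves $\Delta\log(1+|y|^2)=\frac{4}{(1+|y|^2)^2}$ on $\R^2$ and change variables $y=\lambda x$.

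\textbf{Step 3: the mass identity $\int_{\R^2}u_\lambda = 8\pi$.} Passing to polar coordinates, $\int_{\R^2}u_\lambda\,dx = \int_0^\infty \frac{8\lambda^2}{(1+\lambda^2 r^2)^2}\,2\pi r\,dr$; the substitution $t=\lambda^2 r^2$, $dt = 2\lambda^2 r\,dr$, turns this into $8\pi\int_0^\infty \frac{dt}{(1+t)^2} = 8\pi\big[-\tfrac{1}{1+t}\big]_0^\infty = 8\pi$. Note the answer is independent of $\lambda$, reflecting the scaling invariance of the profile. None of the three steps presents a genuine obstacle; the only point requiring a little care is the bookkeeping of the common denominator in the radial Laplacian computation in Step 2, which is routine.
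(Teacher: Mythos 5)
Your verification is correct, and it is exactly the ``straightforward calculation'' the paper has in mind but does not spell out: direct exponentiation for the first identity, the radial Laplacian for the Liouville equation, and polar coordinates plus the substitution $t=\lambda^2 r^2$ for the mass. All three computations check out.
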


We modify the above functions as: 
for any $\lambda \geq 1$ and $r \in (0,1)$,
\[
\overline{u}_\lambda (x) := 
\frac{8\lambda^2}{(1+\lambda^2|x|^2)^2}, \quad 
\overline{v}_{\lambda, r} (x) := 
2 \log \frac{1+\lambda^2r^2}{1+\lambda^2|x|^2} +
\log 8,
\]
and by simple calculations it follows that
\begin{eqnarray*}
	\overline{u}_\lambda (x) \leq 8\lambda^2, \quad  
	\overline{v}_{\lambda, r} (x) >  \log 8>0
	\quad \mbox{in }  B(0,r).
\end{eqnarray*}

\begin{proof}[Proof of Theorem \ref{TH4}]
	Let $\Lambda \in (8\pi, \infty) \setminus 4\pi \mathbb{N}$. 
	Take $r\in (0,1)$ and $q \in \Omega$ such that $B(q,2r) \subset \Omega$.  By translation, we may assume that $q=0$. For any $r_1 \in
	(0,r)$, let $\phi_{r,r_1}$ be a smooth and radially symmetric function satisfying
	\[
	\phi_{r,r_1}(B(0,r_1)) =1, \ 
	0\leq \phi_{r,r_1} \leq 1, \ 
	\phi_{r,r_1}(\R^2 \setminus
	B(0,r)  ) =0,\ x \cdot \nabla \phi_{r,r_1}(x) \leq 0.
	\]
	Noting that $$f(\lambda):= 1 - \frac{1}{1+(\lambda
		r_1)^2} \to 1 \quad \mbox{ as } \lambda \to \infty,$$
	and that
	$$
	f'(\lambda)=  \frac{2\lambda r_1}{(1+(\lambda r_1)^2)^2}>0 \quad \mbox{for }\lambda>0,
	$$ 
	we have that $1>f(\lambda) \geq f(1)$ for all $\lambda \geq 1$.

	Now we define the pair $(u_0,v_0)\triangleq(a\overline{u}_\lambda \phi_{r,r_1},
	a\overline{v}_{\lambda, r} \phi_{r,r_1})$
	with some $a > \Lambda /8\pi > 1$.  Then, we prove that
	\begin{lemma}
		There  is a sufficiently large $\lambda>1$ and 
		$a> \Lambda /8\pi$  such that
		\begin{eqnarray}\label{mass_inequality}
		&& \io u_0 = \Lambda.
		\end{eqnarray}
	\end{lemma}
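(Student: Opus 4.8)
The plan is to compute $\int_\Omega u_0\,dx=a\int_\Omega \overline{u}_\lambda\phi_{r,r_1}\,dx$ explicitly and show that, for a suitable choice of $\lambda$, it can be made equal to $\Lambda$ by adjusting $a$ in the admissible range $(\Lambda/8\pi,\infty)$. First I would record the elementary integral identity: since $\overline{u}_\lambda$ is (up to the $r$-shift) exactly $u_\lambda$ from Lemma \ref{lemm:sol-fund-sys}, passing to polar coordinates gives
\[
\int_{B(0,\rho)}\overline{u}_\lambda\,dx=\int_0^\rho\frac{8\lambda^2}{(1+\lambda^2s^2)^2}\,2\pi s\,ds=8\pi\left(1-\frac{1}{1+\lambda^2\rho^2}\right)=8\pi f_\rho(\lambda),
\]
where $f_\rho(\lambda)=1-\frac{1}{1+\lambda^2\rho^2}\nearrow 1$ as $\lambda\to\infty$. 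Because $\phi_{r,r_1}\equiv1$ on $B(0,r_1)$, $\phi_{r,r_1}\equiv0$ outside $B(0,r)$, and $0\le\phi_{r,r_1}\le1$, the monotonicity sandwich
\[
8\pi f_{r_1}(\lambda)=\int_{B(0,r_1)}\overline{u}_\lambda\,dx\le\int_\Omega\overline{u}_\lambda\phi_{r,r_1}\,dx\le\int_{B(0,r)}\overline{u}_\lambda\,dx=8\pi f_r(\lambda)
\]
holds, and both sides tend to $8\pi$ as $\lambda\to\infty$. Hence $m(\lambda):=\int_\Omega\overline{u}_\lambda\phi_{r,r_1}\,dx\to 8\pi$ as $\lambda\to\infty$.

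Next I would fix $\lambda$ large. Given $\Lambda\in(8\pi,\infty)\setminus4\pi\mathbb{N}$, choose $a_0:=\Lambda/8\pi>1$ (which also exceeds $1$, consistent with the running hypothesis $a>\Lambda/8\pi>1$ — note we actually need $a$ strictly larger, so pick $a>a_0$). Since $m(\lambda)\to 8\pi$, for every $\epsilon>0$ there is $\lambda_\epsilon\ge1$ with $8\pi-\epsilon\le m(\lambda)\le 8\pi$ for all $\lambda\ge\lambda_\epsilon$; in particular, choosing $\epsilon$ small enough that $8\pi-\epsilon>\Lambda/a$ for some admissible $a>\Lambda/8\pi$ is possible because $\Lambda/a<\Lambda/(\Lambda/8\pi)=8\pi$. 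Concretely, set $a:=\Lambda/m(\lambda)$ for $\lambda\ge\lambda_\epsilon$; then by construction $\int_\Omega u_0\,dx=a\,m(\lambda)=\Lambda$, and since $m(\lambda)\le 8\pi$ we get $a=\Lambda/m(\lambda)\ge\Lambda/8\pi$, while if the inequality $m(\lambda)\le8\pi$ is strict (which it is, as $f_r(\lambda)<1$), we even obtain $a>\Lambda/8\pi$ as required. This furnishes the pair $(\lambda,a)$ satisfying \eqref{mass_inequality}.

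The only mildly delicate point is bookkeeping the two small parameters $r_1<r$ and ensuring $a$ lands in the open interval $(\Lambda/8\pi,\infty)$ rather than merely $[\Lambda/8\pi,\infty)$; this is handled automatically because $f_r(\lambda)=1-\frac{1}{1+\lambda^2r^2}<1$ strictly for every finite $\lambda$, so $m(\lambda)<8\pi$ strictly and hence $a=\Lambda/m(\lambda)>\Lambda/8\pi$ strictly. No integrability or regularity obstruction arises here since $\overline{u}_\lambda\phi_{r,r_1}$ is smooth and compactly supported in $\Omega$. I do not anticipate a genuine obstacle in this lemma; it is a direct computation plus the monotone convergence $f_\rho(\lambda)\to1$, with the subsequent (harder) work — estimating $\mathcal{F}(u_0,v_0)$ below $F_\ast(\Lambda)$ — deferred to the following lemmas.
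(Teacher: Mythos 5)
Your proof is correct and follows essentially the same route as the paper: compute $\int_{B(0,\ell)}\overline{u}_\lambda = 8\pi\bigl(1-\frac{1}{1+(\lambda\ell)^2}\bigr)$ by changing variables, sandwich $m(\lambda):=\int_\Omega\overline{u}_\lambda\phi_{r,r_1}$ between the values on $B(0,r_1)$ and $B(0,r)$, and set $a=\Lambda/m(\lambda)$. One small remark worth making: the paper's proof also records the two-sided bound $\Lambda/8\pi \le a \le \Lambda/(8\pi f(1))$ (its \eqref{bound_of_a}), which is not needed for this lemma's conclusion but is invoked repeatedly in the subsequent energy estimates to ensure the constants there are independent of $a$; you obtain the same upper bound from $m(\lambda) > 8\pi f_{r_1}(\lambda) \ge 8\pi f(1)$, so it would be worth stating it explicitly before moving on.
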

\begin{proof}
	Firstly by changing variables,  we see that
	\begin{eqnarray*} 
		\int_{B(0,\ell)} \overline{u}_\lambda \nonumber 
		&=& 8 \int_{B(0,\ell)}
		\frac{\lambda^2}{(1+\lambda^2|x|^2)^2}\,dx \nonumber \\
		&=&8 \int_{B(0,\lambda \ell)} \frac{dy}{(1+|y|^2)^2}
		\nonumber \\
		&=&16 \pi \int_0^{\lambda \ell} \frac{s}{(1+s^2)^2}\,ds
		\nonumber \\
		&=&8 \pi \int_0^{(\lambda \ell)^2}
		\frac{d\tau}{(1+\tau)^2} \nonumber \\
		&=& 8 \pi \cdot 
		\left( 1 -\frac{1}{1+(\lambda\ell)^2} \right)
		\quad \mbox{ for } \ell >0,
	\end{eqnarray*}
	and that
	\begin{equation}\label{eqn:relat-a-lambda}
	8 \pi \cdot 
	\left( 1 -\frac{1}{1+(\lambda r_1)^2} \right)
	<
	\io \overline{u}_\lambda \phi_{r,r_1}
	<
	8 \pi \cdot 
	\left( 1 -\frac{1}{1+(\lambda r)^2} \right).						 
	\end{equation}
	Then there is a unique constant $a=a(r_1,r,\lambda)$ satisfying
	\begin{equation}\label{bound_of_a}
	 \frac{\Lambda }{8\pi} 
	 \leq  a 
	\leq 	\frac{\Lambda}{8\pi f(1)}
	\end{equation}
	and (\ref{mass_inequality}). 
\end{proof}
Next, we want to show that $\mathcal{F}(u_0,v_0)$ can be sufficiently negative as $\lambda\rightarrow+\infty.$ First, we note that
\begin{lemma}There is $C>0$ such that
\begin{eqnarray}
\int_\Omega u_0 \log u_0 & \leq & 
16a\pi  \cdot \log \lambda + C \ \mbox{ as }
\ \lambda \rightarrow \infty.
\end{eqnarray}
\end{lemma}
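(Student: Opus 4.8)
The plan is to expand $u_0\log u_0$ pointwise and discard a term of the right sign. Writing $u_0=a\,\overline{u}_\lambda\,\phi_{r,r_1}$ and noting that $u_0\log u_0\equiv 0$ on $\{\phi_{r,r_1}=0\}$, on the set $\{\phi_{r,r_1}>0\}$ we have
\[
u_0\log u_0=a\,\overline{u}_\lambda\,\phi_{r,r_1}\bigl(\log a+\log\overline{u}_\lambda+\log\phi_{r,r_1}\bigr).
\]
Since $0\le\phi_{r,r_1}\le1$, the factor $\log\phi_{r,r_1}$ is nonpositive (and $\phi_{r,r_1}\log\phi_{r,r_1}$ stays bounded near the zero set of $\phi_{r,r_1}$), so this last contribution may be dropped when estimating from above:
\[
\io u_0\log u_0\le(\log a)\io a\,\overline{u}_\lambda\,\phi_{r,r_1}+a\io\overline{u}_\lambda\,\phi_{r,r_1}\log\overline{u}_\lambda.
\]

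For the first term I would invoke the mass normalization \eqref{mass_inequality} to get $\io a\,\overline{u}_\lambda\,\phi_{r,r_1}=\Lambda$, together with the two-sided bound \eqref{bound_of_a}, which shows that $\log a$, hence $(\log a)\Lambda$, is controlled by a constant depending only on $\Lambda,r,r_1$ and not on $\lambda$. For the second term I would use the explicit formula $\overline{u}_\lambda(x)=8\lambda^2/(1+\lambda^2|x|^2)^2\le8\lambda^2$, which yields the pointwise bound $\log\overline{u}_\lambda\le\log 8+2\log\lambda$; multiplying by the nonnegative quantity $a\,\overline{u}_\lambda\,\phi_{r,r_1}$, integrating, and using $\io\overline{u}_\lambda\,\phi_{r,r_1}\le\int_{\R^2}\overline{u}_\lambda=8\pi$ (Lemma \ref{lemm:sol-fund-sys}, or directly \eqref{eqn:relat-a-lambda}), one gets
\[
a\io\overline{u}_\lambda\,\phi_{r,r_1}\log\overline{u}_\lambda\le 8\pi a\bigl(\log 8+2\log\lambda\bigr)=16a\pi\log\lambda+8\pi a\log 8.
\]
A final appeal to \eqref{bound_of_a} bounds $8\pi a\log 8$ independently of $\lambda$, and summing the two estimates gives $\io u_0\log u_0\le16a\pi\log\lambda+C$ for all large $\lambda$, as claimed.

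There is no genuine obstacle here beyond bookkeeping, since everything reduces to elementary pointwise inequalities together with the already-established identity \eqref{mass_inequality} and bound \eqref{bound_of_a}. The one point deserving attention is that $a=a(r_1,r,\lambda)$ varies with $\lambda$, so one must retain it as the explicit coefficient $16a\pi$ in front of $\log\lambda$ rather than crudely replacing it by its upper bound from \eqref{bound_of_a}: that $a$-dependence is precisely what has to be tracked when this estimate is later combined with the gradient and coupling contributions in order to drive $\mathcal{F}(u_0,v_0)$ below $F_\ast(\Lambda)$.
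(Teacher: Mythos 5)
Your proof is correct and follows essentially the same route as the paper: decompose $u_0\log u_0$ via $\log(a\overline{u}_\lambda\phi_{r,r_1})=\log a+\log\overline{u}_\lambda+\log\phi_{r,r_1}$, drop the nonpositive $\phi_{r,r_1}\log\phi_{r,r_1}$ contribution, apply the pointwise bound $\log\overline{u}_\lambda\le\log 8+2\log\lambda$ together with $\io\overline{u}_\lambda\le 8\pi$, and control the $\log a$ term using \eqref{bound_of_a}. Your version is actually a touch more careful than the paper's intermediate inequality $\io u_0\log u_0\le a\io\overline{u}_\lambda\log\overline{u}_\lambda+a\log a\io\overline{u}_\lambda$ (which does not literally hold pointwise where $\overline{u}_\lambda<1$, since $\phi_{r,r_1}\le1$ goes the wrong way when $\log\overline{u}_\lambda<0$), but you arrive at the same final estimate by replacing $\log\overline{u}_\lambda$ with its positive upper bound before discarding $\phi_{r,r_1}$.
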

\begin{proof}
Observe  that  
	\begin{eqnarray*}
		\int_\Omega u_0 \log u_0 
		& \leq & a \io
		\overline{u}_\lambda \log \overline{u}_\lambda + a \log a
		\io \overline{u}_\lambda.
	\end{eqnarray*}
	Since $\log \overline{u}_\lambda \leq \log (8\lambda^2) = 2 \log \lambda +\log 8$ and $\io \overline{u}_\lambda \leq 8\pi$,  
	\begin{eqnarray}\label{lyapest1}
	\int_\Omega u_0 \log u_0 & \leq & 
	2a \cdot 8\pi \cdot \log \lambda + C \ \mbox{ as }
	\ \lambda \rightarrow \infty,   
	\end{eqnarray}
	where we remark that the constant $C$ is independent of $a$ in view of \eqref{bound_of_a}.
\end{proof}	
\begin{lemma}
	There exists $C>0$ such that
	\begin{equation}\label{lyapest2}
		\int_\Omega u_0 v_0 dx\geq32a^2\pi\log \lambda-C
		 \ \mbox{ as }
\ \lambda \rightarrow \infty,
	\end{equation}as well as
	\begin{equation}\label{lyapest3}
	\frac12\int_\Omega\left(v_0^2+|\nabla v_0|^2\right)dx\leq 16a^2\pi\log \lambda+C
	 \ \mbox{ as }
\ \lambda \rightarrow \infty.
	\end{equation}
\end{lemma}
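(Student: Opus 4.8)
\noindent\emph{Proof plan.} The two bounds will be handled separately; in each case the strategy is to isolate the $\log\lambda$–contribution and to verify that every remaining term stays $O(1)$ uniformly in $\lambda$ (and, by \eqref{bound_of_a}, uniformly in $a$). Recall that $(u_0,v_0)=(a\overline{u}_\lambda\phi_{r,r_1},\,a\overline{v}_{\lambda,r}\phi_{r,r_1})$, that $\overline{v}_{\lambda,r}>\log 8>0$ on $B(0,r)$, and that $0\le\phi_{r,r_1}\le 1$ with $\phi_{r,r_1}\equiv 1$ on $B(0,r_1)$.

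\noindent\textbf{Lower bound for $\int_\Omega u_0v_0$.} Since $u_0v_0=a^2\overline{u}_\lambda\overline{v}_{\lambda,r}\phi_{r,r_1}^2\ge 0$ and is supported in $B(0,r)$, we have
\[
\io u_0 v_0\,dx\ \ge\ a^2\int_{B(0,r_1)}\overline{u}_\lambda\,\overline{v}_{\lambda,r}\,dx .
\]
Writing $\overline{v}_{\lambda,r}(x)=\bigl(2\log(1+\lambda^2r^2)+\log 8\bigr)-2\log(1+\lambda^2|x|^2)$, the first bracket is $4\log\lambda+O(1)$, and by the elementary computation recorded above $\int_{B(0,r_1)}\overline{u}_\lambda=8\pi\bigl(1-(1+\lambda^2r_1^2)^{-1}\bigr)=8\pi+O(\lambda^{-2})$, so this piece contributes $32\pi\log\lambda+O(1)$. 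For the remaining piece, the scaling $y=\lambda x$ transforms $\int_{B(0,r_1)}\overline{u}_\lambda\log(1+\lambda^2|x|^2)\,dx$ into $8\int_{B(0,\lambda r_1)}(1+|y|^2)^{-2}\log(1+|y|^2)\,dy$, which is bounded above by the convergent integral $8\int_{\R^2}(1+|y|^2)^{-2}\log(1+|y|^2)\,dy$. Hence $\int_{B(0,r_1)}\overline{u}_\lambda\overline{v}_{\lambda,r}\ge 32\pi\log\lambda-C$ for $\lambda$ large, and multiplying by $a^2$ and using $a\le\Lambda/(8\pi f(1))$ yields \eqref{lyapest2}.

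\noindent\textbf{Upper bound for $\tfrac12\int_\Omega(v_0^2+|\nabla v_0|^2)$.} Split this as $\tfrac{a^2}{2}\int_{B(0,r)}(\overline{v}_{\lambda,r}\phi_{r,r_1})^2\,dx+\tfrac{a^2}{2}\int_{B(0,r)}|\nabla(\overline{v}_{\lambda,r}\phi_{r,r_1})|^2\,dx$. For the first term, drop $\phi_{r,r_1}\le 1$ and split $B(0,r)$ at radius $1/\lambda$: on $\{|x|<1/\lambda\}$ one has $0\le\overline{v}_{\lambda,r}\le 2\log(1+\lambda^2r^2)+\log 8=O(\log\lambda)$, contributing $O(\lambda^{-2}(\log\lambda)^2)=o(1)$; on $\{1/\lambda\le|x|<r\}$, using $1+\lambda^2|x|^2\ge\lambda^2|x|^2$ and (for $\lambda$ large) $1+\lambda^2r^2\le 2\lambda^2r^2$, one gets $0\le\overline{v}_{\lambda,r}(x)\le 4\log(r/|x|)+C$, and $\int_{B(0,r)}\bigl(4\log(r/|x|)+C\bigr)^2\,dx<\infty$; hence $\int_{B(0,r)}\overline{v}_{\lambda,r}^2\,dx=O(1)$. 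For the second term, expand
\[
|\nabla(\overline{v}_{\lambda,r}\phi_{r,r_1})|^2=\phi_{r,r_1}^2|\nabla\overline{v}_{\lambda,r}|^2+2\phi_{r,r_1}\overline{v}_{\lambda,r}\,\nabla\overline{v}_{\lambda,r}\cdot\nabla\phi_{r,r_1}+\overline{v}_{\lambda,r}^2|\nabla\phi_{r,r_1}|^2 .
\]
Since $\nabla\phi_{r,r_1}$ is supported in the annulus $B(0,r)\setminus B(0,r_1)$, where $|x|\ge r_1$ forces $\overline{v}_{\lambda,r}\le C$ and $|\nabla\overline{v}_{\lambda,r}|=4\lambda^2|x|/(1+\lambda^2|x|^2)\le 4/r_1$, the last two terms are $O(1)$; bounding $\phi_{r,r_1}\le 1$ in the first term and computing $\int_{B(0,r)}|\nabla\overline{v}_{\lambda,r}|^2\,dx=16\pi\bigl(\log(1+\lambda^2r^2)-1+(1+\lambda^2r^2)^{-1}\bigr)=32\pi\log\lambda+O(1)$ (again via $y=\lambda x$ and polar coordinates) gives $\tfrac{a^2}{2}\int_{B(0,r)}|\nabla(\overline{v}_{\lambda,r}\phi_{r,r_1})|^2\,dx\le 16a^2\pi\log\lambda+C$. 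Combining with the $O(1)$ bound on the $v_0^2$–part and using \eqref{bound_of_a} gives \eqref{lyapest3}.

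\noindent\textbf{Main obstacle.} The only step requiring care is that $\int_{B(0,r)}\overline{v}_{\lambda,r}^2\,dx$ is $O(1)$ rather than $O((\log\lambda)^2)$: the naive pointwise bound $\overline{v}_{\lambda,r}\sim 4\log\lambda$ near the origin is far too crude, and one must use that $\overline{v}_{\lambda,r}$ is already only of order $\log(r/|x|)$ once $|x|$ exceeds the concentration scale $1/\lambda$. (Equivalently, after the rescaling $y=\lambda x$ the leading $\lambda^2r^2(\log\lambda)^2$ contributions cancel exactly.) Everything else reduces, after the rescaling $y=\lambda x$ and passage to polar coordinates, to elementary one-variable integrals involving $t(1+t)^{-2}$, $\log(1+t)(1+t)^{-2}$ and $(1+t)^{-2}$; these are the only routine computations, and all the $O(1)$ errors are independent of $a$ thanks to the uniform bound on $a$ in \eqref{bound_of_a}.
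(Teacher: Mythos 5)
Your proof is correct and follows essentially the same route as the paper: restrict to $B(0,r_1)$ for the lower bound, extract the leading $\log\lambda$ contribution, rescale by $y=\lambda x$ to reduce the remaining terms to convergent one-variable integrals, and for the upper bound verify that $\int_{B(0,r)}\overline{v}_{\lambda,r}^2=O(1)$ while $\int_{B(0,r)}|\nabla\overline{v}_{\lambda,r}|^2=32\pi\log\lambda+O(1)$, with all error constants uniform in $a$ by \eqref{bound_of_a}. The only cosmetic differences are that the paper obtains $\int_{B(0,r)}\overline{v}_{\lambda,r}^2=O(1)$ from the single pointwise estimate $0\le\overline{v}_{\lambda,r}(x)\le 4\log\tfrac{1+r}{|x|}+\log 8$, valid on all of $B(0,r)$ and uniformly in $\lambda\ge1$, rather than your split at $|x|=1/\lambda$, and that the paper passes directly from $\int|\nabla v_0|^2$ to $16a^2\int_{B(0,r)}\tfrac{\lambda^4|x|^2}{(1+\lambda^2|x|^2)^2}\,dx$ without displaying the cutoff cross terms, which your expansion handles more explicitly on the annulus supporting $\nabla\phi_{r,r_1}$.
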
	
\begin{proof}
Using $\overline{v}_{\lambda ,r}>0$ in $B(0,r)$, we
	see that   
	\begin{eqnarray*}
		\int_\Omega u_0 v_0 & \geq & a^2 \int_{B(0,r_1)}
		\overline{u}_\lambda \overline{v}_{\lambda ,r}.
	\end{eqnarray*}
	Since 
	$$\overline{v}_{\lambda, r} (x) >  
	2 \log \frac{1+\lambda^2 r^2}{1+\lambda^2 |x|^2} \quad
	\mbox{ for } x \in B(0,r_1), 
	$$
	then we have that 
	\begin{eqnarray*}
		\int_\Omega u_0 v_0 & \geq & a^2 \int_{B(0,r_1)}
		\overline{u}_\lambda  \cdot 2 \log \frac{1+\lambda^2
			r^2}{1+\lambda^2 |x|^2}   \\
		& > & 4a^2 \log (\lambda r) \int_{B(0,r_1)} \overline{u}_\lambda 
		- 2a^2 \int_{B(0,r_1)}
		\overline{u}_\lambda \log (1+\lambda^2 |x|^2)  
	\end{eqnarray*}
	and that
	\begin{eqnarray*}
		\int_{B(0,r_1)}
		\overline{u}_\lambda \log (1+\lambda^2 |x|^2) & = &
		8 \int_{B(0,r_1)}
		\frac{\lambda^2 \log (1+\lambda^2 |x|^2)}{(1+\lambda^2 |x|^2)^2}\,dx  \\
		& = &  16\pi  \int_0^{\lambda r_1}
		\frac{ s\log (1+s^2)}{(1+s^2)^2}\,ds  \\
		& < &  8\pi  \int_0^{\infty}
		\frac{ \log (1+\xi)}{(1+\xi)^2}\,d\xi < \infty.
	\end{eqnarray*}
	Combining these with (\ref{eqn:relat-a-lambda}), we obtain that
	\begin{eqnarray*}
	\nn
	\int_\Omega u_0 v_0 
	&\geq& 4a^2 \log (\lambda r)\cdot 8\pi
	\left( 1 -\frac{1}{1+(\lambda r_1)^2} \right)- C \\
	&\geq& 32 \pi a^2 \log \lambda - C'
	\end{eqnarray*}
	for $\lambda > 1$, $r \in (0,1)$ and $r_1 \in (0,r)$ with some positive constants $C, C'$. 
	We remark that the constant $C'$ is independent of $a$ due to \eqref{bound_of_a}.
	
	On the other hand, since
	$$
	\dfrac{1+\lambda^2 r^2}{1+\lambda^2 |x|^2} \leq 
	\left(\dfrac{1+\lambda r}{\lambda |x|} \right)^2,
	$$ 
	we see that for $\lambda \geq 1$
	\[
	|\overline{v}_{\lambda, r} (x)|
	\leq 4 \log \frac{1+r}{|x|} + \log 8 
	\ \mbox{ in } B(0,r).
	\]
	Hence it follows from straightforward calculations that there is a positive constant $C$ satisfying 
	\begin{eqnarray*}
	\nn
	\frac{1}{2}\io v_0^2 
	&\leq& 
	a^2 \int_{B(0,r)} \left(4 \log \frac{1+r}{|x|} + \log 8 \right)^2\\
	&\leq& C,
	\end{eqnarray*}
	where the constant $C$ is independent of $a$ due to \eqref{bound_of_a}.
	Moreover by the direct calculations,
	\[
	|\nabla \overline{v}_{\lambda, r} (x)|
	= \dfrac{4 \lambda^2 |x|}{1+\lambda^2 |x|^2} 
	\ \mbox{ in } B(0,r) 
	\]
	and that
	\begin{eqnarray*}
		\io |\nabla v_0|^2 
		&\leq&
		16a^2 \int_{B(0,r)} \dfrac{ \lambda^4 |x|^2}{(1+\lambda^2 |x|^2)^2}\,dx\\
		&=&16a^2 \int_{B(0,\lambda r)} \dfrac{  |y|^2}{(1+|y|^2)^2}\,dy\\
		&=&32 \pi a^2 \int_0^{\lambda r} \dfrac{  s\cdot s^2}{(1+s^2)^2}\,ds\\
		&=&16 \pi a^2 \int_0^{(\lambda r)^2} \dfrac{ \tau}{(1+\tau)^2}\,d\tau\\
		&\leq&16 \pi a^2\int_0^{(\lambda r)^2} \dfrac{ 1}{1+\tau}\,d\tau\\
		&=&16 \pi a^2\cdot \log (1+ (\lambda r)^2)
	\end{eqnarray*}
	thus
	\begin{eqnarray*}
	\frac{1}{2}\io |\nabla v_0|^2 
	\leq 16 \pi a^2 \log \lambda +C'',
	\end{eqnarray*}
	where we again remark that the constant $C''$ is independent of $a$ due to \eqref{bound_of_a}.
\end{proof}	
	
	Collecting \eqref{lyapest1}, \eqref{lyapest2} and \eqref{lyapest3}, we infer that for $r \in (0,1)$ and $r_1 \in (0,r)$ there exists some $C=C(r, r_1, \phi_{r,r_1})$ such that
	\begin{eqnarray}\label{lyapunov_decreasing}
	\nn
	\mathcal{F}(u_0,v_0)
	& \leq & 
	16 \pi a \log \lambda 
	- 32 \pi a^2 \log \lambda
	+16 \pi a^2 \log \lambda
	+C \\
	\nn
	& = & -16 \pi a (a-1)\log \lambda +C\\
	& \leq & - 2\Lambda  \left(\frac{\Lambda}{8\pi} -1\right)
	\log \lambda + C  \rightarrow
	-\infty \ \mbox{ as } \ \lambda \rightarrow \infty,
	\end{eqnarray}
	where we recalled that \eqref{bound_of_a} implies
	$$a(a-1)> 
	\frac{\Lambda }{8\pi} \left(\frac{\Lambda}{8\pi} -1\right).$$
	
	In the last step,  we construct a suitable initial data based on the above discussion. 
	For $\Lambda \in (8\pi, \infty) \setminus  4\pi \mathbb{N}$, we first fix $0<r_1 < r$ 
	and function $\phi_{r,r_1}$. 
	Secondly in view of \eqref{lyapunov_decreasing} 
	we can choose some $\lambda > 1$ such that 
	$$
	- 2\Lambda  \left(\frac{\Lambda}{8\pi} -1\right)
	\log \lambda + C < F_* (\Lambda),
	$$
	where $C=C(r, r_1, \phi_{r,r_1})$  is the constant in \eqref{lyapunov_decreasing}.  
	Finally we choose $a$ satisfying \eqref{mass_inequality} and \eqref{bound_of_a}.
	Therefore by the above discussion $(u_0,v_0)$ also satisfies 
	\begin{equation}
		\mathcal{F}(u_0,v_0)<F_*(\Lambda).
	\end{equation}
	Thus let $(u,v)$
	be the solution to \eqref{chemo2} with the initial function
	$(u_0,v_0)$. If the solution is globally bounded in time,
	Proposition \ref{prop:conv-solu-subseq} guarantees that there are a
	subsequence $\{t_k\} \subset (0,\infty)$ and a stationary solution
	$(u_s,v_s)$ satisfying that
	\[
	\lim_{t_k \rightarrow \infty} (u(t_k),v(t_k)) = (u_s,
	v_s) \ \mbox{ in } C^1(\overline{\Omega})
	\]
	and that
	\[
	\mathcal{F}(u_s,v_s) < F_\ast(\Lambda).
	\]
	It contradicts to the definition of $F_\ast(\Lambda)$. 
	Thus the proof is complete. 
\end{proof}
\bigskip
\bigskip

\noindent\textbf{Acknowledgments} \\

K. Fujie is supported by Japan Society for the Promotion of Science (Grant-in-Aid for Early-Career Scientists; No.\ 19K14576).


\begin{thebibliography}{99}
	\itemsep=0pt

\bibitem{Anh19} J. Ahn and C. Yoon,
Global well-posedness and stability of constant equilibria in parabolic-elliptic chemotaxis systems without gradient sensing,
Nonlinearity, \text{32} (2019), 1327--1351.

\bibitem{BBTW15}
N. Bellomo, A. Belouquid, Y. Tao and M. Winkler,
Toward a mathematical theory of Keller--Segel models of pattern formation in biology tissues,
Math. Mod. Meth. Appl. Sci., \textbf{25} (2015), 1663--1763.

\bibitem{BCM10}
A. Blanchet, J.A. Carrillo and N. Masmoudi,
Infinite time aggregation for the critical Patlak-Keller-Segel model in $\mathbb{R}^2$,
Commun. Pure Appl. Math., \textbf{61} (2008), 1449--1481.




\bibitem{BS}
H. Br\'ezis and W. Strauss,
Semi-linear second-order elliptic equations in $L^1$,
J.\ Math.\ Soc.\ Japan, {\bf 25} (1973), 565--590.
%

\bibitem{Cao}
 X. Cao,
  Global bounded solutions of the higher-dimensional Keller-Segel system under smallness conditions in optimal spaces, 
  Discrete Contin. Dynam. Syst. Ser. A, \textbf{35}(2015), 1891--1904.

\bibitem{CS}
T. Cie\'slak and C. Stinner,
New critical exponents in a fully parabolic quasilinear Keller-Segel system and applications to volume filling models, 
J. Differential Equations, \textbf{258} (2015), 2080--2113.

\bibitem{FLP07}
E. Feireisl, Ph. Lauren\c cot and H. Petzeltov\'a,
On convergence to equilibria for the Keller--Segel chemotaxis model,
J. Different. Equ., \textbf{236} (2007), 551--569.

\bibitem{PRL12} X. Fu, L.H. Huang, C. Liu, J.D. Huang, T. Hwa and P. Lenz,
Stripe formation in bacterial systems with density-suppressed motility,
Phys. Rev. Lett., \textbf{108} (2012), 198102.


\bibitem{FJ19}
K. Fujie and J. Jiang,
Global Existence for a Kinetic Model of Pattern Formation with Density-suppressed Motilities,
submitted.

\bibitem{FS2016}
K. Fujie and T. Senba,
Global existence and boundedness of radial solutions to a two dimensional fully parabolic chemotaxis system with general sensitivity,
Nonlinearity, \textbf{29} (2016), 2417--2450.

\bibitem{fs2018}
K. Fujie and T. Senba,
A sufficient condition of sensitivity functions for boundedness of solutions to a parabolic-parabolic chemotaxis system, 
Nonlinearity \textbf{31} (2018), 1639–1672.

\bibitem{fs5}
K. Fujie and T. Senba,
Blowup of solutions to a two-chemical substances chemotaxis system in the critical dimension,
J.\ Differential Equations, \textbf{266} (2019), 942--976.


\bibitem{GM18} 
T. Ghoul and N. Masmoudi,
Minimal mass blowup solutions for the Patlak-Keller-Segel equation,
Commun. Pure Appl. Math., \textbf{71} (2018), 1957--2015.

\bibitem{maximal}
  M. Hieber, J. Pr\"uss,
 Heat kernels and maximal $L^p$ -$L^q$ estimates for parabolic evolution equations,
 Comm.\ Partial Differential Equations {\bf 22} (1997), 1647--1669.



\bibitem{HW01}
D. Horstmann and G.-F. Wang, Blow-up in a chemotaxis model without symmetry assumptions, 
Euro. J. Appl. Math., \textbf{12} (2001), 159--177.



\bibitem{JZ09}
J. Jiang and Y. Zhang,
On convergence to equilibria for a chemotaxis model with volume-filling effect,
Asympt. Anal., \textbf{65} (2009), 79--102.

\bibitem{J18}
J. Jiang,
Convergence to equilibria of global solutions to a degenerate quasilinear Keller--Segel
system,
Z. Angew. Math. Phys., \textbf{69} (2018):130.

\bibitem{JKW18} H.Y. Jin, Y.J. Kim and Z.A. Wang,
Boundedness, stabilization, and pattern formation driven by density-suppressed motility,
SIAM J. Appl. Math., \text{78} (2018), 1632--1657.



\bibitem{Laurencot}
Ph. Lauren\c cot,
Global bounded and unbounded solutions to a chemotaxis system with indirect signal production,
Discrete Contin. Dynam. Syst. Ser. B, \textbf{24} (2019),  6419--6444.


\bibitem{Sciencs11} C. Liu et al.,
Sequential establishment of stripe patterns in an expanding cell population, 
Science, \textbf{334} (2011), 238.

%
\bibitem{mizoguchi_winkler} N. Mizoguchi, M. Winkler,
 Blowup in the two-dimensional Keller--Segel system,
 Preprint.

\bibitem{Nagai97}T. Nagai, T. Senba and K. Yoshida,
Application of the Trudinger--Moser inequality to a parabolic
system of chemotaxis,
Funkcialaj Ekvacioj, \textbf{40} (1997), 411--433.

%
%



\bibitem{ssAMSA2000}
T. Senba and T. Suzuki,
Some structures of the solution set for a stationary system of chemotaxis,
\rm Adv.\ Math.\ Sci.\ Appl.,\ {\bf 10} (2000), 191--224.

\bibitem{ssMAA2001}
T. Senba and T. Suzuki,
Parabolic system of chemotaxis: blowup in a finite and the infinite time,
\rm  Methods Appl.\ Anal.,\ {\bf 8} (2001), 349--367. 

\bibitem{TaoWincritical} Y.S. Tao and M. Winkler,
Critical mass for infinite-time aggregation in a chemotaxis model with indirect signal production, 
J. Eur. Math. Soc. (JEMS), \textbf{19} (2017), 3641--3678.

\bibitem{TaoWin17} Y.S. Tao and M. Winkler,
Effects of signal-dependent motilities in a Keller--Segel-type reaction-diffusion system,
Math. Mod. Meth. Appl. Sci., \text{27} (2017), 1645--1683.

\bibitem{Temam}
R. Temam,
Infinite-dimensional dynamical systems in Mechanics and Physics,
Applied Mathematical Sciences, \textbf{68}, Springer-
Verlag, New York, 1988.

\bibitem{WW2019}
J. Wang and M. Wang,
Boundedness in the higher-dimensional
Keller-Segel model with signal-dependent
motility and logistic growth, 
J. Math. Phys., \textbf{60} (2019), 011507.

\bibitem{Win10} M. Winkler,
Boundedness in the higher-dimensional parabolic-parabolic chemotaxis system with logistic source, Comm. Partial Differential Equations, \textbf{35} (2010), 1516--1537.

\bibitem{win_JDE}
 M. Winkler: 
 Aggregation vs.~global diffusive behavior in the higher-dimensional Keller-Segel model,
 J.\ Differential Equations {\bf 248} (2010), 2889--2905.

\bibitem{Win13} M. Winkler, Finite-time blow-up in the higher-dimensional parabolic-parabolic Keller--Segel system, J. Math. Pures Appl., \textbf{100} (2013), 748--767.


\bibitem{YK17} C. Yoon and Y.J. Kim,
Global existence and aggregation in a Keller--Segel model with Fokker--Planck diffusion,
Acta Appl. Math., \textbf{149} (2017), 101--123.	

\end{thebibliography}
\end{document}